\newcommand*\diff{\mathop{}\!\mathrm{d}}
\newcommand{\leqnomode}{\tagsleft@true\let\veqno\@@leqno}
\newcommand{\reqnomode}{\tagsleft@false\let\veqno\@@eqno}
\newcommand{\nr}{{\text{nr}}}
\newcommand{\kl}{{\text{kl}}}
\newcommand{\mres}{\mathbin{\vrule height 1.6ex depth 0pt width
0.13ex\vrule height 0.13ex depth 0pt width 1.3ex}}
\DeclareMathOperator{\BV}{BV}
\DeclareMathOperator{\TGV}{TGV}
\DeclareMathOperator*{\argmin}{argmin}
\newcommand{\TGVat}{\TGV_\alpha ^2}
\DeclareMathOperator{\BD}{BD}
\DeclareMathOperator{\dive}{div}
\DeclareMathOperator{\domain}{\text{dom}}
\DeclareMathOperator{\proj}{proj}
\newcommand{\dkl}{D_{\text{KL}}}
\newcommand{\D}{ \mathcal{D} }
\newcommand{\vc}[2][r]{%
  \gdef\@VORNE{1}
  \left(\hskip-\arraycolsep%
    \begin{array}{#1}\vekSp@lten{#2}\end{array}%
  \hskip-\arraycolsep\right)}
\def\vekSp@lten#1{\xvekSp@lten#1;vekL@stLine;}
\def\vekL@stLine{vekL@stLine}
\def\xvekSp@lten#1;{\def\temp{#1}%
  \ifx\temp\vekL@stLine
  \else
    \ifnum\@VORNE=1\gdef\@VORNE{0}
    \else\@arraycr\fi%
    #1%
    \expandafter\xvekSp@lten
  \fi}
\newcommand{\R}{\mathbb{R}}
\newcommand{\N}{\mathbb{N}}
\newcommand{\symgrad}{\mathcal{E}}
\newcommand{\wrt}{\:\mathrm{d}}
\newcommand{\Wrt}{\mathrm{D}}
\newcommand{\beq}{\begin{equation}}
\newcommand{\eeq}{\end{equation}}
\newcommand{\I}{\mathcal{I}}
\newcommand{\M}{\mathcal{M}}
\begin{document}

\newtheorem{prop}{Proposition}[section]
\newtheorem*{prop*}{Proposition}
\newtheorem{rem}[prop]{Remark}
\newtheorem*{rem*}{Remark}
\newtheorem{thm}[prop]{Theorem}
\newtheorem*{thm*}{Theorem}
\newtheorem{defn}[prop]{Definition}
\newtheorem*{defn*}{Definition}
\newtheorem{lem}[prop]{Lemma}
\newtheorem*{lem*}{Lemma}
\newtheorem{cor}[prop]{Corollary}
\newtheorem*{cor*}{Corollary}
\newtheorem{innercustomthm}{Assumption}
\newenvironment{ass}[1]
  {\renewcommand\theinnercustomthm{#1}\innercustomthm}
  {\endinnercustomthm}

\title{Coupled regularization with multiple data discrepancies}

\author{Martin Holler\footnote{%
    Institute for Mathematics and Scientific Computing, University of
    Graz, Heinrichstra\ss{}e 36, A-8010 Graz, Austria. %
    Email:
    \texttt{martin.holler@uni-graz.at,richard.huber@uni-graz.at}.  The Institute of Mathematics and Scientific Computing is a member of NAWI Graz (\texttt{www.nawigraz.at}) and BioTechMed Graz (\texttt{www.biotechmed.at}).} \footnote{Centre de Mathématiques Appliqu\'ees, \'Ecole Polytechnique, 91128 Palaiseau Cedex, France.} \and Richard Huber\footnotemark[1] \and Florian Knoll\footnote{Bernard and Irene Schwartz Center for Biomedical Imaging and the Center for Advanced Imaging Innovation and Research (CAI$^2$R), Department of Radiology, NYU School of Medicine, New York, NY, United States. Email: \texttt{florian.knoll@nyumc.org}. }%
  }

\maketitle

\begin{abstract}
We consider a class of regularization methods for inverse problems where a coupled regularization is employed for the simultaneous reconstruction of data from multiple sources. Applications for such a setting can be found in multi-spectral or multi-modality inverse problems, but also in inverse problems with dynamic data.
We consider this setting in a rather general framework and derive stability and convergence results, including convergence rates. In particular, we show how parameter choice strategies adapted to the interplay of different data channels allow to improve upon convergence rates 
that would be obtained by treating all channels equally.
Motivated by concrete applications, our results are obtained under rather general assumptions that allow to include the Kullback-Leibler divergence as data discrepancy term. To simplify their application to concrete settings, we further elaborate several practically relevant special cases in detail.

To complement the analytical results, we also provide an algorithmic framework and source code that allows to solve a class of jointly regularized inverse problems with any number of data discrepancies. As concrete applications, we show numerical results for multi-contrast MR and joint MR-PET reconstruction.
\end{abstract}

\section{Introduction}

In many classical fields of application of inverse problems, rather than measuring a single data channel, the simultaneous or sequential acquisition of multiple channels has become increasingly important recently. 
Besides having different sources of information available, an advantage of multiple measurements is that correlations between different data channels can be exploited in the inversion process, often leading to a significant improvement for each channel.

Multi-modality and multi-contrast imaging techniques for instance deal with the exploration of such joint structures for improved reconstruction. Applications of such techniques can be found for instance in biomedical imaging \cite{bathke2017improved, Ehrhardt_structure_TV, Ehrhardt_PET_prior, Ehrhardt_PET_MRI,holler16mri_pet,Schramm17_pls,Rasch17,Rigie2015,Schramm17_pls, Vunckx2012,holler17structural_tv}, geosciences \cite{Steklova2017}, electron microscopy \cite{haberfehlner2014nanoscale} and beyond. Also spatio-temporal imaging techniques can be interpreted in this way, regarding the measurement at each time-point as a different channel, and we exemplary refer to \cite{schloegl2016ictgv_mri,Otazo2014,Burger12_dynamic_pet} for applications.

The present work focuses on a general framework for coupled regularization of inverse problems where the data is split into multiple channels. In an abstract setting, we consider the following problem setup.
\begin{equation} \label{eq:main_min_prob_intro}
 \min _{u} R(u) + \sum_{i=1}^N \lambda_i D_i (T_i u ,f_i), 
 \end{equation}
where the $T_i$ denote forward operators, the $f_i$ the given data, the $D_i$ data discrepancy terms and $R$ a regularization functional.
Our main interpretation of this setup is that $u= (u_1,\ldots,u_N)$ denotes some unknown multi-channel quantity of interest, each $u_i$ corresponds to some measured data $f_i$ such that $T_i u = \widetilde{T}_i u_i \approx f_i $, and $R$ realizes a coupled regularization of all channels.

A first particular, example for such a setting is the joint reconstruction of magnetic resonance (MR) and positron emission tomography (PET) images via solving
\[ \min _{u = (u_1,u_2)} R((u_1,u_2)) + \lambda_1 \|T_1 u_1 - f_1\|_2 ^2 + \lambda_2 \int_\Sigma (T_2 u_2 + c_2 - f_2) - f_2 \log (\frac{T_2 u_2 + c_2}{f_2}) , \]
as has for instance been considered in \cite{holler16mri_pet,Ehrhardt_PET_MRI}. Here, the first data fidelity reflects Gaussian noise in MR, while the second one is the Kullback-Leibler divergence that arises since the noise in PET imaging is typically assumed to be Poisson distributed.

A second example is the situation that the components $(f_i)_i$ result from a sequential measurement process which is such that at different measurement times, the quality of the measured data is different. A practical application of this is spatio-temporal regularization for dynamic magnetic resonance imaging (dMRI) \cite{schloegl2016ictgv_mri,Otazo2014}, where the $f_i$ would correspond to measurements of the same object at different time points. There, one aims to solve
\begin{equation}
 \min _{u} R(u) + \sum_{i=1}^N \lambda_i \|T_i u_i - f_i\|_2^2, 
 \end{equation}
where now $u = (u_i)_{i=1}^N$ is a time series of measurements and $R$ employs a spatio-temporal regularization.
Since in dMRI, there is always a trade-off between the signal quality and measurement time, it is reasonable to adapt the measurement time to the underlying dynamics of the object, leading to measurements with different resolution and noise properties.

We highlight two particular situations of the setting \eqref{eq:main_min_prob_intro} that are captured by standard theory: The first one is the situation when the regularization decouples, i.e., $R(u) = \sum _{i=1}^N R_i (u_i)$, which reduces the reconstruction to $N$ decoupled problems. Application-wise, however, it has been shown in many recent works that, in case the different channels share some structure, a coupled regularization is highly beneficial in terms of reconstruction quality. The second particular case is when all data terms are weighted the same, i.e, $\lambda_i = \lambda_j$ for all $i,j$. In this situation, all data terms can be grouped to a single discrepancy and, again, standard theory applies. We believe, however, that there are various good reasons to consider an individual weighting of all data terms: i) The noise type of the involved measurements might be different and the different data discrepancies might hence scale differently. ii) The noise level will in general be different, e.g., the SNR for one channel might be significantly better, allowing to choose a very high value for one of the $\lambda_i$. iii) The degree of ill-posedness might be very different for each channel, as is the case for instance with MR-PET reconstruction, and hence again an appropriate adaption of the regularization parameters is necessary. 

Allowing different parameters for different data discrepancies, the following questions arise.

\begin{itemize}
\item What is the convergence behavior of the method if the data of some channels converges to a ground truth? Is there a limit problem, and if so, what does it look like?
\item What about convergence rates in case the noise on all channels approaches zero? How is the interplay of the different data terms with respect to convergence rates?
\item Taking into account different types of data terms, what is the best parameter choice?
\end{itemize}
We will see that those questions all have rather natural answers. In particular, we will show that in case the data discrepancies have different continuity properties, which is true for instance for MR-PET reconstruction, an appropriate parameter choice allows to partially compensate for less regular discrepancies. 

To the best knowledge of the authors, a convergence theory for coupled regularization approach to inverse problems with multiple data discrepancies has not been developed so far. Regarding classical Tikhonov regularization with a single regularization and a single data term we refer to  the books \cite{Engl96_book_regularization_ip,scherzer2009variationalmethods,
Hofmann12_regularization_methods,Pereverzev13_inverse_problems} for convergence results in Hilbert and Banach spaces. In particular, we highlight the seminal work \cite{Hofmann07_nonlinear_tikhonov_banach} that provides appropriate source conditions and convergence rates in Banach spaces that have also been the basis for this work.

 While \cite{Hofmann07_nonlinear_tikhonov_banach} and classical works consider a setting that allows for powers of norms as data discrepancies, there are far less works that also cover discrepancies that do not satisfy a (generalized) triangle inequality, as is the case for the Kullback-Leibler divergence that is included in the present work. For the latter, we refer to \cite{hohage2016inverse,Resmerita07,sawatzky2013tv,Poe08}.

While multi-discrepancy regularization has not been considered in depth so far, there has been a lot of recent research on multi-penalty regularization and convergence behavior for multiple parameters and functionals. 
We refer to the works \cite{pereverzev2011multiparreg,ito2011multiparreg,grasmair11multi} for an additive combination of multiple regularization terms, to \cite{naumova13multiparam,Grasmair16_multi_penalty,fornasier2014parameter} for an infimal-convolution-type combination and to \cite{bredies2013tgvregularization} for stability and convergence results for multiple-parameter-TGV regularization.

\subsection{Outline of the paper}

The paper is organized as follows: In Section 2 we define the variational problem setting under consideration and provide stability and convergence results in the general setting. This is done in a way that captures both powers of norms as well as the Kullback-Leibler divergence as data discrepancy terms. In Section 3 we consider particular classes of data fidelities. We show how our results can be applied to powers of norms and obtain corresponding convergence rates. After some preliminary results on the Kullback-Leibler divergence, we conclude stability and convergence rates for mixed $L^2$ norm and Kullback-Leibler discrepancies, which is particularly motivated by applications to joint MR-PET reconstruction. In Section 4 we exemplary work out the application of our results to two choices of coupled regularization. In Section 5 we first outline an algorithmic framework that allows to realize coupled TGV regularization with any combination of $L^2$ and Kullback-Leibler divergence discrepancies. Then we show exemplary numerical results for coupled regularization in the context of multi-contrast MR and PET imaging.

\section{General stability and convergence results} \label{sec:general_results}
\subsection{Notation and variational problem setting} We will, throughout this work and up to further specification, consider the following problem setting: Let $(X,\|\cdot \|_X)$ be a Banach space and $R\colon X \to [0,\infty]$ be a regularization term. Further, for $i = 1,\dots,N$, let $(Y_i,\|\cdot \|_{Y_i})$ be normed spaces, let $D_i\colon Y_i\times Y_i \to [0,\infty]$ be  given data discrepancies and let $T_i:\domain(T_i) \subset X \rightarrow Y_i$ be  operators that model the forward problem. For $u \in \domain(T):=\bigcap _{i=1}^N \domain(T_i) \subset X$, $f = (f_1,\ldots,f_N) \in Y:=Y_1 \times \ldots Y_N$ and $\lambda \in [0,\infty]^N$ we define 
\begin{equation} \label{eq:objective_functional_general}
J_\lambda (u,f) := \sum_{i=1}^N \lambda_i D_i(T_i u,f_i)+R(u),
\end{equation}
where, for $\lambda _i = \infty$, we define 
\[
\lambda_i D_i(T_i u,f_i) = \I_{\{v:D_i(T_i v,f_i)=0\}} (u) = 
\begin{cases}
0 \quad \text{ if } D_i(T_i u,f_i) = 0,
\\ \infty \quad  \text{ else.}
\end{cases}
\]
We consider the problem
\begin{equation}\label{eq:main_min_problem_general} \tag*{$P(\lambda,f)$}
\min_{u\in \domain(T)} J_\lambda (u,f) =   \min_{u\in \domain(T)} \sum_{i=1}^N \lambda_i D_i(T_i u,f_i)+R(u).
\end{equation}
We will further use $\tau_X$ and $\tau_{Y_i}$, $\tau_{D_i}$, $i=1,\ldots,N$, to denote different topologies on $X$ and $Y_i$, $i=1,\ldots,N$, respectively, and assume that $\tau_X$ and $\tau_{Y_i}$ satisfy the T2 separation axiom. For $\tau$ being any topology, we say that a sequence $(z_n)_n$ $\tau$-converges to $z$ as $n\rightarrow \infty$, and write $z_n \overset{\tau}{\rightarrow} z$ as $n\rightarrow \infty$, if the sequence converges with respect to the notion of convergence induced by the topology. Further we write $f^n_i \overset{D_i}{\rightarrow} f_i$ and say $f^n_i$ $D_i$-converges to $f_i$ if $D_i(f_i,f^n_i) \rightarrow 0$ as well as $f^n_i \overset{\tau_{D_i}}{\rightarrow}f_i$ as $n \rightarrow \infty$. 
By $z_n \rightarrow z$ we denote norm convergence and by $z_n \rightharpoonup z$ we denote weak convergence.

Note that the notion $f^n_i \overset{D_i}{\rightarrow} f_i$ will be used to denote convergence of the data, and the topology $\tau_{D_i}$ allows to require a stronger convergence than just $D_i(f_i,f^n_i ) \rightarrow 0$, which is useful for particular applications. In standard settings (e.g. with norm discrepancies) however, the topology $\tau_{D_i}$ will be chosen to be the trivial topology such that $f^n_i \overset{D_i}{\rightarrow} f_i$ is equivalent to $D_i(f_i,f^n_i) \rightarrow 0$.

We will use the following product-space notation: We denote by $\overset{\tau_{Y}}{\rightarrow}$ and $\overset{\tau_{D}}{\rightarrow}$ component wise convergence for sequences in $Y:=\bigoplus_{i=1}^N Y_i$. By $\|\cdot\|_Y= \left( \sum_{i=1}^N \|\cdot \|_{Y_i}^2\right) ^{1/2}$ we denote the standard product norm on $Y$ and we define $T:X \rightarrow Y$ as $(Tu)_i = T_i u$.

Throughout this work, we will use the following assumptions.
\begin{ass}{(G)} \label{ass:main_assumptions} 
\textcolor{white}{as}
\begin{enumerate}
 \item For each $i$ and $a,b\in Y_i$,  $D_i(a,b) = 0 \Rightarrow a=b$.
\item Each $D_i$ is  $\tau_{Y_i} \times {D_i}$ lower semi-continuous, i.e. 
\begin{equation}
\Big(a^n \overset{\tau_{Y_i}}{\to} a, \ f_i^n \overset{{D_i}}{\to} f_i \Big)\Rightarrow D_i(a,f_i) \leq \liminf_{n \to \infty} D_i(a^n,f^n_i).
\end{equation}
 \item $R$ is lower semi-continuous w.r.t $\tau_X$.
\item The operators $T_i$ are continuous from $(X,\tau_X)$ to $(Y_i,\tau_{Y_i})$ and $\domain(T_i)$ is closed w.r.t convergence in $\tau_X$.
\item For any $\lambda \in (0,\infty)^N$, the functional $J_{\lambda}(\cdot,\cdot)$ is uniformly coercive in the sense that, for sequences $(u^n)_n, (f^n)_n$ such that $f^n \overset{D}{\to} f$ we have that $J_\lambda(u^n,f^n)< c$ implies that $(u^n)_n$ admits a $\tau_X$-convergent subsequence.
\end{enumerate}
\end{ass}
\begin{rem} The assumptions above are kept rather general with the aim of allowing data discrepancies beyond powers of norms, in particular the Kullback-Leibler divergence used for Poisson noise. Regarding their applicability, we note the following: Assumptions 1 and 2 are rather weak and standard, e.g. are fulfilled in case the $D_i$ are powers of norms and $\tau_Y$ is stronger than the corresponding weak topology. Assumptions 3 and 4 are also rather standard. Assumption 5 essentially requires pre-compactness of sublevel sets, which is a standard condition for existence.
\end{rem}
It will also be convenient to use the following notation: For a set $I\subset \{1,\dots,N\}$ we  define the reduced energy functional 
\begin{equation}
J_{\lambda,I^c}(u,f)= R(u)+ \sum_{i \in I^c} \lambda_i D_i(T_iu,f_i).
\end{equation}
Here, the complement $I^c$ is always taken in $\{1,\ldots,N\}$, i.e., we define $I^c = \{ 1, \ldots,N\} \setminus I$.
Further, writing $I = \{i_1,\ldots,i_{|I|}\}$, we will use the notation $T_I=(T_{i_1},\dots T_{i_{|I|}})$, $f_I = (f_{i_1},\dots f_{i_{|I|}})$ and $\lambda_I = (\lambda_{i_1},\dots \lambda_{i_{|I|}})$. For two sequences $(x^n)_n$ and $(y^n)_n$ we write $x^n \sim y^n$ if there exist constants $c,C>0$ such that $cy^n \leq x^n \leq Cy^n$ for all $n$. Further, we use the standard little $o()$ and big $O()$ terminology, that is, we say $x^n = o(y^n)$ and $x^n  = O(y^n)$  if $\lim_{n\rightarrow \infty} x^n/y^n = 0$ or $x^n/y^n$ is uniformly bounded in $n$, respectively.

For the functional $R$ and $\xi \in \partial R(u) \subset X^*$, an element of the subdifferential of $R$ at $u \in X$, we define the Bregman distance $D_R^\xi:X \times X \rightarrow [0,\infty]$ as $D_R^\xi (v,u) = R(v) - R(u) - \langle \xi,v-u\rangle_{X^*,X}$.

\subsection{Results in the general setting}
Using Assumption \ref{ass:main_assumptions}, the following existence result can be obtained by standard arguments.
\begin{prop} \label{prop:existence_standard} Let $f\in Y$ and $\lambda \in (0,\infty)^ N$ be such that $J_\lambda(\cdot,f)$ is proper, i.e., finite in at least one point, and assume that Assumption \ref{ass:main_assumptions} holds. Then there exists a solution to \ref{eq:main_min_problem_general}.
\proof
Take $(u^n)_n$ to be a minimizing sequence for which we can assume that $J_\lambda(u^n,f)$ is bounded. Then by assumption $(u^n)_n$ admits a $\tau_X$-convergent subsequence and, by the lower semi-continuity assumptions on $R$ and $D_i$ and by continuity of $T_i$ we obtain that the limit is a minimizer.
\end{prop}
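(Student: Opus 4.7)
The plan is to follow the standard direct method of the calculus of variations, leveraging each of the five items of Assumption~\ref{ass:main_assumptions} in sequence.

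First I would pick a minimizing sequence $(u^n)_n \subset \domain(T)$ for $J_\lambda(\cdot,f)$. Since the functional is proper and $R, D_i \geq 0$, the infimum is finite and we may assume $J_\lambda(u^n,f) \leq C$ for some constant $C < \infty$ and all $n$.

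Next I would invoke the uniform coercivity (Assumption~\ref{ass:main_assumptions}, item 5) applied with the constant data sequence $f^n := f$ (which trivially $D$-converges to $f$, noting that $D_i(f_i,f_i)=0$ is the natural content implicit in the notion of a discrepancy, consistent with item 1). Coercivity then yields a (non-relabeled) subsequence $u^n \overset{\tau_X}{\to} u^\ast$ for some $u^\ast \in X$. Since each $\domain(T_i)$ is $\tau_X$-closed (item 4), $u^\ast \in \bigcap_i \domain(T_i) = \domain(T)$, so $u^\ast$ is admissible.

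Finally I would pass to the limit term by term. By continuity of $T_i \colon (X,\tau_X) \to (Y_i,\tau_{Y_i})$ (item 4), $T_i u^n \overset{\tau_{Y_i}}{\to} T_i u^\ast$. Applying the $\tau_{Y_i}\times D_i$ lower semi-continuity of $D_i$ (item 2) with the constant data sequence $f_i^n := f_i$, I obtain $D_i(T_i u^\ast, f_i) \leq \liminf_n D_i(T_i u^n, f_i)$ for each $i$, and by lower semi-continuity of $R$ (item 3), $R(u^\ast) \leq \liminf_n R(u^n)$. Summing with positive weights $\lambda_i$ (and handling the extended-real case $\lambda_i = \infty$ separately via the indicator convention, where the $\liminf$ bound is preserved since $D_i(T_i u^n, f_i) = 0$ along the bounded-energy subsequence forces $D_i(T_i u^\ast, f_i) = 0$ by the same l.s.c.) yields $J_\lambda(u^\ast, f) \leq \liminf_n J_\lambda(u^n, f) = \inf J_\lambda(\cdot,f)$, so $u^\ast$ is a minimizer.

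The only mild subtlety, and hence the step I would be most careful with, is the use of lower semi-continuity with a \emph{constant} second argument in the discrepancy: this requires $f_i \overset{D_i}{\to} f_i$, i.e.\ $D_i(f_i,f_i)=0$ and reflexivity of $\tau_{D_i}$-convergence, which is harmless in all concrete cases of interest but is worth making explicit. Everything else is entirely routine direct-method bookkeeping.
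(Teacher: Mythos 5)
Your proof is correct and follows essentially the same route as the paper's one-line argument: the direct method, using the coercivity of Assumption \ref{ass:main_assumptions} (item 5) with the constant data sequence to extract a $\tau_X$-convergent subsequence, then lower semi-continuity of $R$ and $D_i$ together with continuity of $T_i$ and closedness of $\domain(T_i)$ to identify the limit as a minimizer. The subtlety you flag---that invoking items 2 and 5 with constant data $f^n_i = f_i$ tacitly requires $D_i(f_i,f_i)=0$, which item 1 does not guarantee---is glossed over in the paper's proof as well, so your extra care there (and your harmless but unnecessary treatment of $\lambda_i = \infty$, excluded here since $\lambda \in (0,\infty)^N$) adds detail rather than a different argument.
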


The following proposition shows existence of what we will refer to as $J_{\lambda,I^c}$ minimal solution to $T_Iu=f_I^\dagger$. This notion will be relevant later on when we study limit problems in multi-discrepancy regularization for vanishing noise level.
\begin{prop} \label{prop:existence_j_minimizing_solution} Assume that Assumption \ref{ass:main_assumptions} holds.
Let $\lambda \in (0,\infty)^N$, $I\subset \{1,\dots ,N\}$ and $f^\dagger \in Y$ be given. Assume that there exists $u_0$ such that $D_i(T_iu_0,f_i^\dagger) = 0$ for $i \in I$ and $J_{\lambda,I^c}(u_0,f^\dagger)<\infty$. 
 Then there exist an $u^\dagger$ such that
\begin{equation} \label{eq:JI_minimizing_solution}
u^\dagger \in \argmin_{\substack{u \in \domain(T) \\ T_Iu=f_I^\dagger }} J_\lambda(u,f^\dagger) = \argmin_{\substack{u \in \domain(T) \\ T_Iu=f_I^\dagger }} J_{\lambda,I^c}(u,f^\dagger).
\end{equation}
\end{prop}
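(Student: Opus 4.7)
The plan is to apply the direct method of the calculus of variations on the constrained set $F := \{u \in \domain(T) : T_I u = f_I^\dagger\}$, in essentially the same spirit as Proposition~\ref{prop:existence_standard}.

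First I would verify the second equality in~\eqref{eq:JI_minimizing_solution} and that the infimum is finite. By Assumption~\ref{ass:main_assumptions}(1), the condition $D_i(T_iu_0,f_i^\dagger)=0$ for $i\in I$ forces $T_iu_0=f_i^\dagger$, so $u_0\in F$ and $F\neq\emptyset$. For any $u\in F$ and $i\in I$, $T_iu=f_i^\dagger=T_iu_0$ gives $D_i(T_iu,f_i^\dagger)=D_i(T_iu_0,f_i^\dagger)=0$, so $J_\lambda(\cdot,f^\dagger)=J_{\lambda,I^c}(\cdot,f^\dagger)$ on $F$. This yields the equality of argmins, and the common infimum is bounded above by $J_{\lambda,I^c}(u_0,f^\dagger)<\infty$.

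Next I would take a minimizing sequence $(u^n)_n\subset F$, so $J_\lambda(u^n,f^\dagger)=J_{\lambda,I^c}(u^n,f^\dagger)$ is uniformly bounded. Applying the uniform coercivity Assumption~\ref{ass:main_assumptions}(5) with the constant data sequence $f^n\equiv f^\dagger$---which $D$-converges to $f^\dagger$, since $D_i(f_i^\dagger,f_i^\dagger)=0$ is automatic for $i\in I$ by the existence of $u_0$ and is tacitly granted for the discrepancies considered in the sequel---I extract a subsequence (not relabeled) with $u^n\overset{\tau_X}{\to} u^\dagger$ for some $u^\dagger\in X$. Closedness of $\domain(T)$ under $\tau_X$-convergence (Assumption~\ref{ass:main_assumptions}(4)) gives $u^\dagger\in\domain(T)$; continuity of each $T_i$ from $(X,\tau_X)$ to $(Y_i,\tau_{Y_i})$, together with the T2 axiom and the fact that $T_iu^n\equiv f_i^\dagger$ for $i\in I$, implies $T_Iu^\dagger=f_I^\dagger$, so $u^\dagger\in F$.

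Finally, the lower semi-continuity of $R$ (Assumption~\ref{ass:main_assumptions}(3)) and of each $D_i(\cdot,f_i^\dagger)$ for $i\in I^c$ (Assumption~\ref{ass:main_assumptions}(2) with the constant data sequence) yield $J_{\lambda,I^c}(u^\dagger,f^\dagger)\le\liminf_n J_{\lambda,I^c}(u^n,f^\dagger)$, so $u^\dagger$ attains the infimum. The only minor point to watch is the invocation of coercivity with a constant data sequence, which relies on $D_i(f_i^\dagger,f_i^\dagger)=0$; this is granted for $i\in I$ by hypothesis and is standard for the discrepancies of interest. Apart from this, the argument is the standard direct method adapted to the constraint $T_Iu=f_I^\dagger$, with no further obstacle.
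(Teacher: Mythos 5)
Your proof is correct and follows essentially the same route as the paper's: the direct method on the constrained set, using coercivity (Assumption~\ref{ass:main_assumptions}(5)) with the constant data sequence to extract a $\tau_X$-convergent subsequence, then continuity of the $T_i$ and lower semi-continuity of $R$ and the $D_i$ to pass to the limit. You are in fact slightly more careful than the paper, which silently invokes the coercivity and lower semi-continuity assumptions with the constant sequence $f^n\equiv f^\dagger$ without noting the implicit requirement $D_i(f_i^\dagger,f_i^\dagger)=0$ that you flag explicitly.
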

\begin{rem}Note that $J_{\lambda,I^c}$ is again a multi-data Tikhonov functional for indices in $I^c$, and so $J_{\lambda,I^c}$-minimal solutions to $T_iu=f_i^\dagger$ can be understood as solutions to a Tikhonov problem for $I^c$, using $T_Iu=f_I^\dagger$ as prior.\end{rem}
\begin{proof} It is obvious that the two minimization problems are equivalent, hence we show existence for the second one.
Due to our assumptions, the set of solutions is not empty and the infimum is greater equal zero. Consequently, an infimising sequence exists, which due to the coercivity assumption admits a $\tau_X$-convergent subsequence with limit $u^\dagger$. 
Due to continuity of the $T_i$ and lower semi-continuity of the $D_i$ in the first component w.r.t. $\tau_{Y_i}$ we get that the limit $u^\dagger$ is a solution of \eqref{eq:JI_minimizing_solution} and the assertion follows.
\end{proof}

Now we consider the situation of varying data, which is relevant for investigating stability of the solution method with respect to data perturbations. To this aim, we will need the following continuity assumption at a particular pair $(u,f^\dagger)  \in X\times Y$ and index set $I\subset \{1,\ldots,N\}$.
\begin{ass}{$S_1(u,f^\dagger,I^c)$} \label{ass:stability_1}
\leqnomode
\begin{equation*} 
\left\{
\begin{aligned}
& \text{For }i \in I^ c\text{,  the mapping }  f_i \mapsto D_i(T_iu,f_i) \text{ is continuous at } f_i^\dagger \text{ w.r.t. } {D_i}\text{-convergence}.
\end{aligned}\right.
\end{equation*}
\reqnomode
\end{ass}
We remark that this assumption is in particular always satisfied if the data fidelity is a power of a norm, as can be easily deduced from the inverse triangle inequality for norms.

The next theorem considers the rather general situation that the given data converges and we choose the parameters to either converge to infinity, which corresponds to assuming vanishing noise, or to a fixed positive number. From this, simplified results on stability and convergence for vanishing noise level will then be obtained as corollaries. Note further that we allow for powers $p_i$ of the data terms, which will be relevant for the particular case of norms later on. 
\begin{thm}[General convergence result] \label{thm:convergence_vanishing_noise_general}
Suppose that Assumption \ref{ass:main_assumptions} holds. Let $(f^n)_n$ in $Y$ and $f^\dagger \in Y$ be such that $f^n \overset{D}{\rightarrow} f^\dagger$ as $n\rightarrow \infty$, and, for $i \in \{1,\ldots,N\}$, let $p_i \in [1, \infty)$ and define $\delta_i ^n = D_i(f^\dagger _i,f_i^n)^{1/p_i} $ such that, by assumption, $\delta_i^n \rightarrow 0$ as $n \rightarrow \infty$. 
Choose the parameters $\lambda^n = (\lambda_1^n,\ldots,\lambda_N^n) \in (0,\infty)^ N$ such that, for a given subset $I \subset \{ 1,\ldots,N\}$,
\begin{align*}
& \lambda_i^n \rightarrow \infty, \quad \text{as well as}\quad \lambda_i^n (\delta_i^n)^{p_i}  \rightarrow 0 
&\text{for }i \in I, \\
&\lambda_i^n \rightarrow \lambda_i ^\dagger \in (0,\infty) 
& \text{for }i \in I^c,
\end{align*} 
and denote $\lambda^\dagger = \lim _{n \rightarrow \infty} \lambda ^n \in (0,\infty]^N$.

Assume that there exists a $J_{\lambda^\dagger,I^c}$-minimal solution to $T_Iu=f_I^\dagger$, denoted by $u_0$, such that $S_1(u_0,f^\dagger,I^c)$ holds. 
Then every sequence $(u^n)_n$ of solutions to the problems
\begin{equation} 
\min _{u \in \domain(T)} J_{\lambda ^n} (u,f^n)
\end{equation}
admits a $\tau_X$-convergent subsequence again denoted by $(u^n)_n$ with limit $u^\dagger$ such that 
\begin{align*}
D_i(T_iu^n,f^n_i) &= o((\lambda_i^n)^{-1}) &\text{for } i \in I, \\
\lim_{n\rightarrow\infty} D_i(T_iu^n,f^n_i) &= D_i(T_iu^\dagger,f^\dagger_i) &\text{for } i \in I^c,\\
\lim_{n\rightarrow\infty} R(u^n) &= R(u^\dagger),
\end{align*}
and every limit of a $\tau_X$-convergent subsequence of solutions is a solution to
\begin{equation}  \label{eq:stability_pdagger_orig}
\min_{\substack{u \in \domain(T) \\ T_Iu=f_I }} J_{\lambda^ \dagger} (u,f^\dagger).
\end{equation}
\end{thm}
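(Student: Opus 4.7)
The plan is to compare an arbitrary sequence of minimizers $u^n$ against the $J_{\lambda^\dagger,I^c}$-minimal solution $u_0$ provided by hypothesis, using the continuity $S_1(u_0,f^\dagger,I^c)$ at $u_0$ to derive a uniform bound on the objective, and then to extract a $\tau_X$-convergent subsequence, verify feasibility of the limit, and establish optimality through a lower semi-continuity chain.

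First I would show that $J_{\lambda^n}(u_0,f^n) \to J_{\lambda^\dagger}(u_0,f^\dagger)$. For $i \in I$ the equality $T_i u_0 = f_i^\dagger$ gives $D_i(T_i u_0, f_i^n) = D_i(f_i^\dagger, f_i^n) = (\delta_i^n)^{p_i}$, and hence $\lambda_i^n D_i(T_i u_0, f_i^n) = \lambda_i^n (\delta_i^n)^{p_i} \to 0$ by the parameter choice. For $i \in I^c$, Assumption \ref{ass:stability_1} combined with $f_i^n \overset{D_i}{\to} f_i^\dagger$ and $\lambda_i^n \to \lambda_i^\dagger \in (0,\infty)$ yields $\lambda_i^n D_i(T_i u_0, f_i^n) \to \lambda_i^\dagger D_i(T_i u_0, f_i^\dagger)$. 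Adding $R(u_0)$ and using $T_I u_0 = f_I^\dagger$ together with the convention $\infty \cdot 0 = 0$ gives the claim. Minimality of each $u^n$ then furnishes the uniform bound $J_{\lambda^n}(u^n,f^n) \leq J_{\lambda^n}(u_0,f^n) \leq C$, and Assumption \ref{ass:main_assumptions}(5) provides a $\tau_X$-convergent subsequence, re-denoted $(u^n)_n$, with some limit $u^\dagger$.

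To verify feasibility, for $i \in I$ the bound $\lambda_i^n D_i(T_i u^n, f_i^n) \leq C$ with $\lambda_i^n \to \infty$ forces $D_i(T_i u^n, f_i^n) \to 0$, and joint lower semi-continuity of $D_i$ (using continuity of $T_i$) together with Assumption \ref{ass:main_assumptions}(1) gives $T_i u^\dagger = f_i^\dagger$. For optimality, the lower semi-continuity of $R$ and, for $i \in I^c$, of $D_i$ combined with $\lambda_i^n \to \lambda_i^\dagger$ yields
\[
J_{\lambda^\dagger}(u^\dagger,f^\dagger) \;\leq\; \liminf_n J_{\lambda^n}(u^n,f^n) \;\leq\; \lim_n J_{\lambda^n}(u_0,f^n) \;=\; J_{\lambda^\dagger}(u_0,f^\dagger),
\]
where the leftmost quantity equals $J_{\lambda^\dagger,I^c}(u^\dagger,f^\dagger)$ thanks to $T_I u^\dagger = f_I^\dagger$. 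Since $u_0$ attains the infimum in \eqref{eq:stability_pdagger_orig}, equality must hold throughout and $u^\dagger$ solves \eqref{eq:stability_pdagger_orig}.

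For the individual convergence statements, I would pass to a further subsequence along which each summand of $J_{\lambda^n}(u^n,f^n)$ converges in $[0,\infty]$. The total equality $\lim_n J_{\lambda^n}(u^n,f^n) = J_{\lambda^\dagger}(u^\dagger,f^\dagger)$ together with the component-wise liminf lower bounds (zero for $i \in I$, $\lambda_i^\dagger D_i(T_i u^\dagger, f_i^\dagger)$ for $i \in I^c$, and $R(u^\dagger)$ for the regularizer) forces each component to attain its lower bound. This gives $\lambda_i^n D_i(T_i u^n, f_i^n) \to 0$, hence the $o((\lambda_i^n)^{-1})$ rate for $i \in I$, $D_i(T_i u^n, f_i^n) \to D_i(T_i u^\dagger, f_i^\dagger)$ for $i \in I^c$, and $R(u^n) \to R(u^\dagger)$. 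The main obstacle I anticipate is that the continuity assumption $S_1$ is imposed only at the single comparator $u_0$, which precludes a direct comparison argument against an arbitrary feasible competitor; the proof must therefore route the optimality step through $u_0$ itself, leveraging that $u_0$ is already known to be a minimizer of \eqref{eq:stability_pdagger_orig} via Proposition \ref{prop:existence_j_minimizing_solution}.
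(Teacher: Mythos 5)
Your proposal is correct and follows essentially the same route as the paper's proof: comparison against $u_0$ using minimality and $S_1(u_0,f^\dagger,I^c)$ to obtain the uniform bound, coercivity for extracting a subsequence, lower semi-continuity to establish feasibility $T_I u^\dagger = f_I^\dagger$ and optimality via the chain $J_{\lambda^\dagger}(u^\dagger,f^\dagger) \leq \liminf_n J_{\lambda^n}(u^n,f^n) \leq J_{\lambda^\dagger}(u_0,f^\dagger)$, and finally matching the convergent total against the component-wise $\liminf$ bounds to force each summand to its limit --- the paper organizes this last step as a contradiction argument, which is just the contrapositive of your forced-equality argument. One minor repair: Assumption (G)(5) is stated for a \emph{fixed} $\lambda \in (0,\infty)^N$ while your $\lambda^n$ varies (and blows up on $I$), so to extract the subsequence from $J_{\lambda^n}(u^n,f^n) \leq C$ you should, as the paper does, set $\lambda_i^0 := \inf_n \lambda_i^n > 0$ and use nonnegativity of all terms to get $J_{\lambda^0}(u^n,f^n) \leq J_{\lambda^n}(u^n,f^n) \leq C$ before invoking coercivity.
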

\begin{proof}
 Let $(u^n)_n$ be any sequence of solutions and let $u_0$ be a $J_{\lambda^\dagger,I^c}$-minimal solution to $T_Iu=f_I^\dagger$ satisfying $S_1(u_0,f^\dagger,I^c)$. By minimality, one obtains
\begin{align}\label{equ_Stability_proof_est1}
& J_{\lambda^n}(u^n,f^n)=\sum_{i\in I} \lambda_i^n D_i\big(T_iu^n,f_i^n\big) +\sum_{i\in I^c} \lambda_i ^n D_i\big(T_iu^n,f_i^n\big) +R(u^n)
\\ 
 &\leq J_{\lambda^n}(u_0,f^n)=\sum_{i\in I} \underbrace {\lambda_i^n(\delta_i^n)^ {p_i}}_{ \to 0} +\sum_{i\in I^c} \underbrace{\lambda_i^n}_{\to \lambda_i^\dagger< \infty} \underbrace{ D_i\big(T_iu_0,f_i^n\big)}_{\to D_i\big(T_iu_0,f_i^\dagger\big)<\infty} +\underbrace{R(u_0)}_{<\infty}<C< \infty \notag
\end{align}
for  $C>0$, where the convergence $D_i\big(T_iu_0,f_i^n\big)\to D_i\big(T_iu_0,f_i^\dagger\big)$ is due Assumption $S_1(u_0,f^\dagger,I^c)$.
Setting $\lambda^0_i = \inf_{n\in \N} \lambda_i^n>0$ for all $i$, this implies boundedness of $J_{\lambda^0}(u^n,f^n)$ and consequently, by coercivity, $(u^n)_n$ admits a convergent subsequence.

Assume now that $(u^n)_n$ is any subsequence of a sequence of solutions converging to some  $u^\dagger \in X$.
The estimate in \eqref{equ_Stability_proof_est1} implies for each $i\in I$
\begin{equation} \label{eq:stability_limsup_estimate}
\limsup_{n \rightarrow \infty} \lambda_i^nD_i(T_iu^n,f^n_i)< C
\end{equation}
and hence, by lower semi-continuity of $D_i$ and since $\lambda_i \to \infty$ we obtain $T_iu^\dagger=f_i^\dagger$.

Further, we get from lower semi-continuity and convergence of the $\lambda_i^n$ for $i \notin I$ that
\begin{align*}
\liminf _{n\rightarrow \infty} J_{\lambda^n}(u^n,f^n) 
&\geq \liminf _{n\rightarrow \infty}  \sum_{i \in I^c} \lambda_i ^n D_i\big(T_iu^n,f_i^n\big) +R(u^n) \\
& \geq \sum_{i \in I^c} \lambda^\dagger_i D_i\big(T_iu^\dagger,f_i^\dagger \big) +R(u^\dagger).
\end{align*}
Combining this with the estimate in \eqref{equ_Stability_proof_est1} we obtain, since $u_0 \in X$  solves \eqref{eq:stability_pdagger_orig}, that also $u^\dagger$ is also a $J_{\lambda^\dagger,I^c}$-minimizing solution of $T_Iu=f_I^\dagger$. 

Now assume that there is one $i_0 \in I$ such that $\lambda_{i_0}^ n D_{i_0}(T_{i_0}u^ n,f^ n) $ does not converge to zero. This implies that
\[ \limsup_{n\rightarrow \infty} \sum_{i\in I} \lambda_i^n D_i\big(T_iu^n,f_i^n\big) > c > 0,\]
which, by \eqref{equ_Stability_proof_est1}, yields
\begin{align*}
J_{\lambda^ \dagger}(u_0,f^ \dagger) \geq \limsup_{n\rightarrow \infty} J_{\lambda^n}(u^n,f^n) \geq 
& c + \liminf _{n\rightarrow \infty}  \sum_{i \in I^c} \lambda_i ^nD_i\big(T_iu^n,f_i^n\big) +R(u^n) \\
& \geq c +  J_{\lambda^\dagger}(u^\dagger,f^\dagger),
\end{align*}
and hence a contradiction. Consequently, for all $i \in I$,
\[ \lim_{n\rightarrow \infty}  \lambda_i^n D_i\big(T_iu^n,f_i^n\big) = 0, \]
which means that $D_i(T_iu^n,f_i^n) = o((\lambda_i^n)^{-1})$.
Finally, we assume that either there is an $i_0 \in I^c$ such that $\limsup_{n\rightarrow \infty}  D_{i_0}\big(T_{i_0}u^n,f_{i_0}^n\big) > D_{i_0} \big(T_{i_0}u^\dagger,f_{i_0}^\dagger\big)$ or that $\limsup_{n\rightarrow \infty} R(u^n) > R(u^\dagger)$. In either case this implies that there is a $c>0$ such that
\[ \limsup _{n\rightarrow \infty}  \sum_{i \in I^c} \lambda_i ^n D_i\big(T_iu^n,f_i^n\big) +R(u^n) \geq 
c + \sum_{i \in I^c} \lambda_i ^\dagger D_i\big(T_iu^\dagger,f_i^\dagger\big) +R(u^\dagger)   .\]
But as before this implies that
\[ J_{\lambda^ \dagger}(u_0,f^ \dagger) \geq  c +  J_{\lambda^\dagger}(u^\dagger,f^\dagger) \]
which is a contradiction to $u_0$ being optimal. Together with lower semi-continuity, this implies $\lim_{n\rightarrow \infty}  D_i\big(T_iu^n,f_i^n\big) = D_i \big(T_iu^\dagger,f_i^\dagger\big)$ for all $i \in I^c$ as well as $\lim_{n\rightarrow \infty} R(u^n) = R(u^\dagger)$.
\end{proof}
\begin{rem}
We note that, by choosing $\lambda_i^n$ for $i \in I$ such that 
\[\lambda_i^n  \sim  (\delta_i^n)^{-(p_i - \epsilon_i)}\]
with $\epsilon _i \in (0,p_i) $, the assumptions of Theorem \ref{thm:convergence_vanishing_noise_general} concerning the choice of $\lambda_i^n$ hold and we obtain the rate
\[ D_i(T_iu^n,f^n) = o(\delta^{p_i - \epsilon_i})  \quad \text{for } i \in I,\]
which comes arbitrary close to the rate $o(\delta^{p_i})$. The rate $O(\delta^{p_i})$ will be obtained under additional source conditions later on.
\end{rem}
As first consequence of Theorem \ref{thm:convergence_vanishing_noise_general} we obtain a basic stability result. That is, letting all regularization parameters in Theorem \ref{thm:convergence_vanishing_noise_general} fixed, it follows that the solutions of $P(\lambda,f^ \dagger)$ are stable with respect to perturbations of the data.
 Note that this could have also been obtained from standard theory (e.g. \cite{Hofmann07_nonlinear_tikhonov_banach}) without handling the $N$ data terms individually.
\begin{cor}[Stability] \label{cor:stability_general}
Suppose that Assumption \ref{ass:main_assumptions} holds. Let $(f^n)_n$ in $Y$ and $f^\dagger \in Y$ be such that $f^n \overset{D}{\rightarrow} f^\dagger$ as $n\rightarrow \infty$ and take $\lambda^\dagger \in (0,\infty)^ N$.
Assume that for $u_0$ being a solution to $P(\lambda,f^ \dagger)$, Assumption $S_1(u_0,f^\dagger,\{1,\ldots,N\})$ holds. 
Then every sequence $(u^n)_n$ of solutions to 
\begin{equation} 
\min _{u \in \domain(T)} J_{\lambda ^\dagger} (u,f^n),
\end{equation}
admits a $\tau_X$-convergent subsequence again denoted by $(u^n)_n$ with limit $u^\dagger$ such that 
\begin{align*}
\lim_{n\rightarrow\infty} D_i(T_iu^n,f^n_i) &= D_i(T_iu^\dagger,f^\dagger_i) &\text{for all } i\\
\lim_{n\rightarrow\infty} R(u^n) &= R(u^\dagger)
\end{align*}
and every limit of a $\tau_X$-convergent subsequence of solutions is a solution to
\begin{equation}  
\min_{u \in \domain(T) } J_{\lambda^ \dagger} (u,f^\dagger).
\end{equation}
\proof
Choose $I = \emptyset$, $\lambda^n = \lambda^\dagger$ and apply Theorem \ref{thm:convergence_vanishing_noise_general}.
\end{cor}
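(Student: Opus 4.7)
The plan is to obtain the corollary as a direct specialization of Theorem \ref{thm:convergence_vanishing_noise_general}. The key observation is that the theorem is stated for a general partition of indices into a ``vanishing noise'' block $I$ (where $\lambda_i^n\to\infty$) and a ``fixed parameter'' block $I^c$ (where $\lambda_i^n\to\lambda_i^\dagger\in(0,\infty)$), and the stability statement is precisely the degenerate case in which every index falls into the fixed block.

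Concretely, I would set $I=\emptyset$ and $\lambda^n=\lambda^\dagger$ (a constant sequence). With this choice the block condition $\lambda_i^n\to\infty,\;\lambda_i^n(\delta_i^n)^{p_i}\to0$ for $i\in I$ is vacuous, while for $i\in I^c=\{1,\dots,N\}$ the requirement $\lambda_i^n\to\lambda_i^\dagger\in(0,\infty)$ is trivially satisfied since the sequence is constant. The existence hypothesis of a $J_{\lambda^\dagger,I^c}$-minimal solution to $T_Iu=f_I^\dagger$ reduces, since the constraint $T_Iu=f_I^\dagger$ is void for $I=\emptyset$, to the existence of a minimizer of $J_{\lambda^\dagger}(\cdot,f^\dagger)$ over $\domain(T)$; this is the assumed solution $u_0$ of $P(\lambda^\dagger,f^\dagger)$. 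The continuity hypothesis $S_1(u_0,f^\dagger,I^c)$ becomes exactly $S_1(u_0,f^\dagger,\{1,\dots,N\})$, which is the hypothesis of the corollary.

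Having verified the assumptions, invoking Theorem \ref{thm:convergence_vanishing_noise_general} yields all three conclusions. The first conclusion, $D_i(T_iu^n,f_i^n)=o((\lambda_i^n)^{-1})$ for $i\in I$, is vacuous. The second conclusion gives $\lim_{n\to\infty}D_i(T_iu^n,f_i^n)=D_i(T_iu^\dagger,f_i^\dagger)$ for every $i$, and the third gives $\lim_{n\to\infty}R(u^n)=R(u^\dagger)$. The limit problem $\min_{T_Iu=f_I,\;u\in\domain(T)}J_{\lambda^\dagger}(u,f^\dagger)$ reduces to the unconstrained minimization $\min_{u\in\domain(T)}J_{\lambda^\dagger}(u,f^\dagger)$, matching the asserted conclusion.

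There is essentially no obstacle to overcome beyond bookkeeping: all the analytic content (coercivity-based extraction of a $\tau_X$-convergent subsequence, lower semicontinuity, and the two-sided comparison showing that $\liminf$ and $\limsup$ of each data term agree with the value at the limit) is already performed in the proof of the theorem. The only thing worth emphasizing is that with $I=\emptyset$ the estimate \eqref{equ_Stability_proof_est1} degenerates to a pure comparison $J_{\lambda^\dagger}(u^n,f^n)\le J_{\lambda^\dagger}(u_0,f^n)$, and the continuity hypothesis $S_1$ is exactly what is needed to push $J_{\lambda^\dagger}(u_0,f^n)$ to $J_{\lambda^\dagger}(u_0,f^\dagger)$ and thus secure the uniform bound used for coercivity.
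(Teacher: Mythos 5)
Your proposal is correct and coincides exactly with the paper's proof, which likewise sets $I=\emptyset$ and $\lambda^n=\lambda^\dagger$ and invokes Theorem \ref{thm:convergence_vanishing_noise_general}; your additional bookkeeping (the vacuity of the $I$-block conditions, the reduction of the $J_{\lambda^\dagger,I^c}$-minimal solution hypothesis to a solution of $P(\lambda^\dagger,f^\dagger)$, and the role of $S_1$ in securing the uniform bound in \eqref{equ_Stability_proof_est1}) is accurate and simply makes explicit what the paper leaves implicit.
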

The next corollary is specific to the multi-data-discrepancy setting and handles the situation when the noise on some components vanishes while the noise on other components remains the same. 
It shows that in this situation, using an appropriate parameter choice, the limit problem comprises the minimization of the remaining terms subject to hard constraints for the channels with ground truth data. To deduce this result, we set the regularization parameters corresponding to components with a fixed noise level to be constant and assume an appropriate parameter choice for the others.
Remarkably, the result holds without Assumption \ref{ass:stability_1}.
\begin{cor}[Convergence for vanishing noise] \label{cor:convergence_vanishing_noise_general}
Assume that Assumption \ref{ass:main_assumptions} holds. Let $I \subset \{1, \ldots,N\}$ be any index set, $f^  \dagger \in Y$, and the sequence $(f^n)_n$ be such that $f^n_{I^c} = f_{I^c}^\dagger$ and $f^n_I \overset{{D_I}}{\rightarrow} f_I^\dagger$ as $n\rightarrow \infty $. Define $\delta^n_i = D_i(f^ \dagger_i,f^n_i)^ {1/p_i}$ with $p_i \in [1,\infty)$ for $i\in I$ and $\delta_i^ n = 0$ else.

Choose the parameters $\lambda^n = (\lambda_1^n,\ldots,\lambda_N^n) \in (0,\infty)^ N$ such that $\lambda_{i}^n = \lambda^ \dagger _{i}\in (0,\infty)$ for $i \in I^c$ and
\begin{align*}
& \lambda_i^n \rightarrow \infty, \quad \text{as well as}\quad \lambda_i^n (\delta_i^n)^{p_i}  \rightarrow 0 
&\text{for }i \in I,
\end{align*} 
and denote $\lambda^\dagger = \lim _{n \rightarrow \infty} \lambda ^n \in (0,\infty]^N$.

Assume that there exists a $J_{\lambda^\dagger,I^c}$-minimal solution to $T_Iu=f_I^\dagger$. 
Then every sequence $(u^n)_n$ of solutions to 
\begin{equation} 
\min _{u \in \domain(T)} J_{\lambda ^n} (u,f^n),
\end{equation}
admits a $\tau_X$-convergent subsequence again denoted by $(u^n)_n$ with limit $u^\dagger$ such that 
\begin{align*}
D_i(T_iu^n,f^n_i) &= o((\lambda_i^n)^{-1} ) &\text{for } i \in I \\
\lim_{n\rightarrow\infty} D_i(T_iu^n,f^ \dagger_i) &= D_i(T_iu^\dagger,f^\dagger_i) &\text{for } i \in I^c\\
\lim_{n\rightarrow\infty} R(u^n) &= R(u^\dagger)
\end{align*}
and every limit of a $\tau_X$-convergent subsequence of solutions is a solution to
\begin{equation}  
\min_{\substack{u \in \domain(T) \\ T_Iu=f_I }} J_{\lambda^ \dagger} (u,f^\dagger).
\end{equation}
\proof 
Apply Theorem \ref{thm:convergence_vanishing_noise_general} and note that Assumption $S_1(u_0,f^\dagger,I)$ was only needed in its proof  to show the convergence $ D_i\big(T_iu_0,f_i^n\big)\rightarrow  D_i\big(T_iu_0,f_i^\dagger\big)$ for $i \in I^c$, which holds trivially since $f^n_i = f_i^ \dagger$ for $i \in I^c$. 
\end{cor}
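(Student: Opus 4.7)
The plan is to reduce this corollary directly to Theorem \ref{thm:convergence_vanishing_noise_general}, since the hypotheses here are essentially a specialization of those in the theorem apart from the missing assumption $S_1(u_0,f^\dagger,I^c)$. First I would verify the remaining hypotheses of the theorem. Data convergence $f^n \overset{D}{\to} f^\dagger$ holds because on $I$ it is given by assumption and on $I^c$ it is trivial, the components being identically equal to $f_i^\dagger$. The parameter choice also fits: on $I$ we have $\lambda_i^n \to \infty$ with $\lambda_i^n(\delta_i^n)^{p_i} \to 0$, and on $I^c$ the constant sequence $\lambda_i^n = \lambda_i^\dagger \in (0,\infty)$ is (trivially) convergent to $\lambda_i^\dagger \in (0,\infty)$, so $\lambda^\dagger = \lim_n \lambda^n \in (0,\infty]^N$ exists with the structure required by the theorem. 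The existence of a $J_{\lambda^\dagger,I^c}$-minimal solution $u_0$ to $T_I u = f_I^\dagger$ is assumed directly.

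The only delicate point is to dispense with $S_1(u_0,f^\dagger,I^c)$. I would revisit where it was used in the proof of Theorem \ref{thm:convergence_vanishing_noise_general}: it enters exclusively through the comparison chain \eqref{equ_Stability_proof_est1}, where one needs $D_i(T_iu_0,f_i^n) \to D_i(T_iu_0,f_i^\dagger)$ for $i \in I^c$ in order to bound $J_{\lambda^n}(u_0,f^n)$ uniformly in $n$. In the present setting this convergence is automatic, since $f_i^n = f_i^\dagger$ for every $n$ and every $i \in I^c$ makes $D_i(T_iu_0,f_i^n)$ a constant sequence. Hence the step that originally relied on $S_1$ is valid for free, and no other step in the theorem's proof invokes $S_1$.

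With that single substitution all remaining conclusions of Theorem \ref{thm:convergence_vanishing_noise_general} transfer verbatim: the uniform bound on $J_{\lambda^0}(u^n,f^n)$ together with coercivity yields a $\tau_X$-convergent subsequence; the $\limsup$ estimate inherited from \eqref{equ_Stability_proof_est1} together with lower semi-continuity of $D_i$ forces $T_i u^\dagger = f_i^\dagger$ for $i \in I$; comparing liminf and limsup of $J_{\lambda^n}(u^n,f^n)$ against $J_{\lambda^\dagger}(u_0,f^\dagger)$ identifies $u^\dagger$ as a $J_{\lambda^\dagger,I^c}$-minimizing solution of $T_I u = f_I^\dagger$; and the same contradiction arguments give the rate $D_i(T_i u^n,f_i^n) = o((\lambda_i^n)^{-1})$ for $i \in I$ and the convergences $D_i(T_iu^n,f_i^\dagger) \to D_i(T_iu^\dagger,f_i^\dagger)$ for $i \in I^c$ as well as $R(u^n) \to R(u^\dagger)$. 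I do not anticipate a genuine obstacle here; the main---and essentially the only---subtlety is tracking where $S_1$ entered the theorem's proof and observing that it becomes vacuous under the stronger hypothesis $f^n_{I^c} = f^\dagger_{I^c}$.
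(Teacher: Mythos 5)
Your proof is correct and follows exactly the paper's own argument: apply Theorem \ref{thm:convergence_vanishing_noise_general} and observe that Assumption $S_1$ entered its proof only through the convergence $D_i(T_iu_0,f_i^n)\to D_i(T_iu_0,f_i^\dagger)$ for $i\in I^c$, which is automatic when $f_i^n=f_i^\dagger$. Your additional verification of the theorem's remaining hypotheses is a sound (if routine) elaboration of what the paper leaves implicit.
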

\begin{rem}
A particular consequence of the previous corollary is that fixing $f^n_i = f^\dagger_i$ also for $i \in I$ allows for any parameter choice $\lambda_i^n$ such that $\lambda_i^n \rightarrow \infty$ and yields still $D_i(T_iu^n,f^\dagger_i)= D_i(T_iu^n,f^n_i) = o((\lambda_i^n)^{-1}) $ for $i \in I$. In the context of joint regularization, this analytically confirms that, letting the parameter $\lambda_i^n$ for some particular, more well-conditioned modalities or channels increase approximates the situation that the corresponding components $T_i u$ are fixed to hit the data exactly and only enter as prior information in the joint regularization functional $R$. 
\end{rem}
In summary, the previous results answer the first question posed in the introduction: Multi-data-fidelity regularization is stable with respect to convergence of the data for some channels and, using an appropriate parameter choice strategy, the limit problem in case the data for some channels converges to a ground truth comprises the minimization of the regularization and data discrepancies for the other channels subject to hard constraints for the channels with ground truth data.

Next we consider convergence rates. To this aim, we will need a slightly stronger version of Assumption \ref{ass:stability_1}, which we denote by $S_2(U,f^\dagger)$. Essentially we will require that the continuity of Assumption \ref{ass:stability_1} also holds for all $u$ in a set $U$ and that the modulus of continuity can be estimated by functions $\psi_i$. The typical situation we have in mind for applications is that $\psi_i(x) = x^{\nu_i}$ with $\nu_i \in (0,1]$.
\begin{ass}{$S_2(U,f^ \dagger)$} \label{ass:stability_2}
\leqnomode
\begin{equation*}
\left\{
\begin{aligned}
& \text{For each } i, \text{ there is a continuous increasing function } \psi_i:[0,\infty) \rightarrow [0,\infty) \text{ with } \psi_i(0) =0, \\
& V_i \in \tau_{D_i} \text{ with }f_i^\dagger \in V_i \text{ and }\alpha>0 \text{ s.t. for each } v \in U\subset X\text{ and } f_i\in V_i \text{ with } D_i(f^\dagger_i,f_i)< \alpha  \\
&\left\{ \begin{aligned}
D_i(T_iv,f_i) &\leq  D_i(T_iv,f^\dagger_i)\Big(1 + \psi_i(D_i(f^\dagger_i,f_i))\Big)+ \psi_i(D_i(f^\dagger_i,f_i)) \\
D_i(T_iv,f_i)& \geq  D_i(T_iv,f^\dagger_i)\Big(1 - \psi_i(D_i(f^\dagger_i,f_i))\Big) -  \psi_i(D_i(f^\dagger_i,f_i)) \\
\end{aligned} \right.
\end{aligned}\right.
\end{equation*}
\reqnomode
\end{ass}


\begin{rem} We note that Assumption \ref{ass:stability_2} is satisfied trivially in case the data discrepancy is a norm or, more generally, whenever the data terms satisfy an inverse triangle inequality of the type
\[ |D_i(T_iv,f_i) - D_i(T_iv,f^\dagger) | \leq \psi_i(D_i(f^\dagger_i,f_i)). \] The reason for using a slightly weaker assumption is that the weaker version as above is satisfied by norms to the power $p$ with $p>1$, for which an inverse triangle inequality does not hold.

We also remark that the assumption  \ref{ass:stability_2} could be further relaxed for those indices $i \in I$ where $\lambda^n_i \rightarrow \infty$, by allowing arbitrary constants as factors for $D_i(T_iv,f_i^\dagger)$, instead of $(1 \pm\psi_i(D_i(f^\dagger_i,f_i)))$ which goes to one. The fact that we can even get the constants to converge to one, which is true but less trivial for powers of norms, is only needed for indices in $I^c$ where $\lambda_i^n$ converges to a finite value. Hence the latter is a particularity of the multi-data fidelity setting when the noise in some component goes to zero but in others not.
\end{rem}
The next theorem provides the main estimates for obtaining convergence rates for all specifications of the data terms that are considered later on. It partially relies on a rather abstract source condition that will be discussed below and simplified in particular applications of the theorem.
\begin{thm}\label{thm:general_convergence_rates}
With the assumptions of Theorem \ref{thm:convergence_vanishing_noise_general}, for $i \in I^c$, define $\phi_i:[0,\infty) \rightarrow [0,\infty)$ continuous, monotonuously increasing with $\phi(0) = 0$ such that
\[
|\lambda^\dagger_i-\lambda^n_i| \leq \phi (\delta_i^n) \text{ for all } i\in I^c.
\]
Let $(u^n)_n$ be a sequence of solutions to $P(\lambda^n,f^n)$ and $u^\dagger$ be a $J_{\lambda^\dagger,f^\dagger}$-minimizing solution of $T_I u = f_I^\dagger$. Set $U_{n_0} = \{ u^n \,|\, n\geq n_0\} \cup \{u^\dagger\}$ and assume that, for some $n_0 \geq 0$, $S_2(U_{n_0},f^\dagger)$ holds.

Then
\begin{equation}
J_{\lambda^\dagger,I^c}(u^n,f^\dagger) \leq J_{\lambda^\dagger,I^c}(u^\dagger,f^\dagger)+ C \bigg( \sum_{i\in I} \lambda_i^n (\delta_i^n)^{p_i}  + \sum_{i\in I^c} \psi_i((\delta_i^n)^{p_i}) + \phi_i(\delta_i^n)   \bigg)  . \label{eq:objective_functional_rates}
\end{equation}
If we assume further that there exists 
\[\xi \in \partial \Big(J_{\lambda^\dagger,I^c}(\cdot,f^\dagger)\Big)(u^\dagger), \] and constants $0<\beta_1<1$, $0<\beta_2$ such that
 \begin{equation}\label{eq:source_condition_general} \tag{SC}
- \langle\xi, u-u^\dagger\rangle \leq \beta_1 D_{J_{\lambda^\dagger,I^ c}(\cdot,f^\dagger)}^\xi(u,u^\dagger) +\beta_2 \sum_{i\in I} D_i\big(T_iu,T_iu^\dagger\big)^{1/p_i}
\end{equation}
for all $u$ satisfying $J_{\lambda^\dagger,I^c}(u,f^\dagger) +  \sum_{i \in I} D_i(T_i u,f^\dagger_i) \leq J_{\lambda^\dagger}(u^\dagger,f^\dagger)+ \epsilon$, for some $\epsilon > 0$,
then, after at most finitely many indices $n$ we have
\begin{multline*}
\sum_{i\in I}  \lambda_i^n  D_i (T_i u^n,f^n_i) + (1-\beta_1) D^\xi_{J_{\lambda^\dagger,I^c}(\cdot,f^\dagger)} (u^n,u^\dagger) \leq \beta_2 \sum_{i \in I} D_i\big(T_iu^n,T_iu^\dagger\big)^  {1/p_i} \\
+ C \bigg( \sum_{i \in I} \lambda_i^n (\delta_i^n)^{p_i}  + \sum_{i \in I^c} \psi_i((\delta_i^n)^{p_i}) + \phi_i(\delta_i^n) \bigg) .
\end{multline*}

\end{thm}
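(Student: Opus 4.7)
The plan is to start from the minimality $J_{\lambda^n}(u^n,f^n)\le J_{\lambda^n}(u^\dagger,f^n)$, apply Assumption $S_2(U_{n_0},f^\dagger)$ to replace discrepancies evaluated at noisy data $f^n$ by discrepancies at $f^\dagger$ (up to $\psi_i$-controlled error), and finally swap $\lambda_i^n$ for $\lambda_i^\dagger$ in the $I^c$ sums using $|\lambda_i^n-\lambda_i^\dagger|\le\phi_i(\delta_i^n)$.

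For the first estimate, the right-hand side collapses by $T_Iu^\dagger=f_I^\dagger$ to give $\sum_{i\in I}\lambda_i^n(\delta_i^n)^{p_i}$ on the $I$-part. The upper $S_2$ bound turns each $I^c$-term $\lambda_i^n D_i(T_iu^\dagger,f_i^n)$ into $\lambda_i^n D_i(T_iu^\dagger,f_i^\dagger)+O(\psi_i((\delta_i^n)^{p_i}))$, and the lower $S_2$ bound does the analogous transformation for $\lambda_i^n D_i(T_iu^n,f_i^n)$ on the left. Exchanging the residual factors $\lambda_i^n$ for $\lambda_i^\dagger$ in the $I^c$ sums costs $\phi_i(\delta_i^n)$ multiplied by $D_i(T_iu^n,f_i^\dagger)$ and $D_i(T_iu^\dagger,f_i^\dagger)$ respectively; both are uniformly bounded (the former via the uniform bound on $J_{\lambda^n}(u^n,f^n)$ established in the proof of Theorem~\ref{thm:convergence_vanishing_noise_general}, the latter by hypothesis), so the excess is absorbed into $C\phi_i(\delta_i^n)$. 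Discarding the nonnegative $\sum_{i\in I}\lambda_i^n D_i(T_iu^n,f_i^n)$ on the left yields \eqref{eq:objective_functional_rates}.

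For the rates statement I would retain the $I$-sum instead of discarding it, and rewrite the functional gap as $J_{\lambda^\dagger,I^c}(u^n,f^\dagger)-J_{\lambda^\dagger,I^c}(u^\dagger,f^\dagger)=D^\xi_{J_{\lambda^\dagger,I^c}(\cdot,f^\dagger)}(u^n,u^\dagger)+\langle\xi,u^n-u^\dagger\rangle$. Inserting the source bound $-\langle\xi,u^n-u^\dagger\rangle\le\beta_1 D^\xi(u^n,u^\dagger)+\beta_2\sum_{i\in I}D_i(T_iu^n,T_iu^\dagger)^{1/p_i}$ and moving $\beta_1 D^\xi$ to the left yields the claim. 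The main obstacle is verifying that \eqref{eq:source_condition_general} applies at $u=u^n$, i.e., that $J_{\lambda^\dagger,I^c}(u^n,f^\dagger)+\sum_{i\in I}D_i(T_iu^n,f_i^\dagger)\le J_{\lambda^\dagger}(u^\dagger,f^\dagger)+\epsilon$ eventually. The first summand is controlled by the first estimate; for the second, keeping $\sum_{i\in I}\lambda_i^n D_i(T_iu^n,f_i^n)$ in the unsymmetrized inequality and using $\lambda_i^n\to\infty$ forces $D_i(T_iu^n,f_i^n)\to 0$ for $i\in I$, and the lower $S_2$ inequality then gives $D_i(T_iu^n,f_i^\dagger)\to 0$. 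This is exactly why the statement holds only ``after at most finitely many indices $n$''.
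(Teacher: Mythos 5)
Your proposal is correct and follows essentially the same route as the paper's proof: optimality of $u^n$ tested against $u^\dagger$ (with $T_Iu^\dagger=f_I^\dagger$ collapsing the $I$-part to $\sum_{i\in I}\lambda_i^n(\delta_i^n)^{p_i}$), the two-sided $S_2$ estimates to pass from $f^n$ to $f^\dagger$, the $\phi_i$-bound to pass from $\lambda^n$ to $\lambda^\dagger$ with the boundedness of $D_i(T_iu^n,f_i^\dagger)$ for $i\in I^c$ justified exactly as in the paper, and finally the Bregman identity combined with \eqref{eq:source_condition_general}. Your verification that the source condition applies at $u=u^n$ for all but finitely many $n$ — reading off boundedness of $\lambda_i^n D_i(T_iu^n,f_i^n)$ from the unsymmetrized inequality, concluding $D_i(T_iu^n,f_i^n)\to 0$ for $i\in I$ since $\lambda_i^n\to\infty$, and transferring to $f_i^\dagger$ via the lower $S_2$ inequality — is the same mechanism the paper uses, merely phrased through convergence to zero instead of absorbing $\sum_{i\in I}D_i(T_iu^n,f_i^\dagger)$ into $\sum_{i\in I}\lambda_i^n D_i(T_iu^n,f_i^n)$ once $\lambda_i^n$ dominates the $S_2$ constant.
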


\begin{proof}
Take $(u^n)_n$ a sequence of solutions and $u^ \dagger$ a $J_{\lambda^\dagger,f^\dagger}$-minimizing solution of $T_I u = f_I^\dagger$ and, without loss of generality, assume that $S_2(U_0,f^\dagger)$ holds.

At first we note that, from optimality and the fact that $T_i u^\dagger = f_i^\dagger$ for $i \in I$ it follows that
\begin{equation}  \label{eq:data_term_estimate_rates_proof}
\begin{aligned}
\sum_{i\in I} \lambda_i^n D_i (T_i u^n,f^n_i) 
&\leq \sum_{i\in I} \lambda_i^n D_i (T_i u^\dagger,f^n_i)  +  J_{\lambda^n,I^c}(u^\dagger,f^n) - J_{\lambda^n,I^c}(u^n,f^n)  \\
&= \sum_{i\in I} \lambda_i^n (\delta_i^n)^{p_i} + J_{\lambda^n,I^c}(u^\dagger,f^n) - J_{\lambda^n,I^c}(u^n,f^n).  
\end{aligned}
\end{equation}
Now we aim to change from $f^n$ to $f^\dagger$ and from $\lambda^n$ to $\lambda^\dagger$ in the functionals $J_{\lambda^n,I^c}$ of last line above. To this aim, we first note that, by assumption $S_2(U_0,f^\dagger)$,
\begin{align*}
J_{\lambda^n,I^c}(u^\dagger,f^n)
& \leq R(u^\dagger) + \sum _{i\in I^c} \lambda_i^n D_i(T_i u^\dagger,f_i^\dagger) (1+ \psi_i((\delta_i^n)^{p_i}) +   \sum_{i \in I^c} \lambda^n_i \psi_i((\delta_i^n)^{p_i})  \\
&\leq J_{\lambda^n,I^c}(u^\dagger,f^\dagger)  + C\sum_{i \in I^c}  \psi_i((\delta_i^n)^{p_i})
\end{align*}
where the last inequality follows from estimating $\lambda_i^n  (D_i(T_iu^\dagger,f_i^\dagger)+1) \leq C$ for all $i\in I^c$. Similarly, we get
\begin{align*}
-J_{\lambda^n,I^c}(u^n,f^n) 
&\leq -R(u^n) - \sum _{I^c} \lambda_i^n D_i(T_iu^n,f_i^\dagger) (1-\psi_i((\delta_i^n)^{p_i}))   + \sum_{i \in I^c} \lambda^n_i \psi_i((\delta_i^n)^{p_i})\\
&\leq -J_{\lambda^n,I^c}(u^n,f^\dagger)  + C\sum_{i \in I^c} \psi_i((\delta_i^n)^{p_i}),
\end{align*}
where the last inequality is obtained by boundedness of $D_i(T_iu^ n,f^\dagger_i)$, which follows by estimating it above with $D_i(T_iu^n,f^n_i)$ (up to constants) using $S_2(U_0,f^\dagger)$, which in turn is bounded by the assertion of Theorem \ref{thm:convergence_vanishing_noise_general}.

Now in order to change from $\lambda^n$ to $\lambda^\dagger$ we sum the two equations above and further estimate
\begin{align*}
J_{\lambda^n,I^c}(u^\dagger,f^n) -J_{\lambda^n,I^c}(u^n,f^n)  
&\leq J_{\lambda^\dagger,I^c}(u^\dagger,f^\dagger) -J_{\lambda^\dagger,I^c}(u^n,f^\dagger) + C\sum_{i \in I^c} \psi_i((\delta_i^n)^{p_i}) \\
&+ \sum_{i \in I^c} |\lambda^n _i  - \lambda^\dagger_i | | D_i (T_i u^\dagger,f^\dagger_i) - D_i (T_i u^n,f^\dagger_i)| \\
&\leq J_{\lambda^\dagger,I^c}(u^\dagger,f^\dagger) -J_{\lambda^\dagger,I^c}(u^n,f^\dagger) + C\sum_{i \in I^c} \psi_i((\delta_i^n)^{p_i}) + \phi_i(\delta_i^n)   ,
\end{align*} 
where the last inequality is again due to boundedness of $D(T_iu^n,f^\dagger_i)$.

Combining the last estimate with  \eqref{eq:data_term_estimate_rates_proof} we get
\begin{equation} \label{eq:data_j_estimate_rates_proof_general}
\begin{aligned}
\sum_{i\in I} \lambda_i^n D_i (T_i u^n,f^n_i) 
& \leq J_{\lambda^\dagger,I^c}(u^\dagger,f^\dagger) -J_{\lambda^\dagger,I^c}(u^n,f^\dagger) \\
& + C \bigg( \sum_{i \in I} \lambda_i^n (\delta_i^n)^{p_i}  + \sum_{i \in I^c} \psi_i((\delta_i^n)^{p_i}) + \phi_i(\delta_i^n)   \bigg) 
\end{aligned}
\end{equation}
which already shows the estimate on the extended objective functional $J_{\lambda ^\dagger,I^ c}(\cdot,f^ \dagger)$ as in \eqref{eq:objective_functional_rates}. Now by the second inequality in $S_2(U_0,f^\dagger)$ we can estimate
$D_i (T_i u^n,f^\dagger_i) \leq C\big( D_i (T_i u^n,f_i^n) + \psi_i((\delta_i^n)^ {p_i})\big)$ for $i \in I$. This, together with the fact that $C < \lambda _i ^n  $ for $i \in I$ and $n$ sufficiently large allows to conclude from \eqref{eq:data_j_estimate_rates_proof_general} that, up to finitely many indices, $J_{\lambda^\dagger,I^c}(u^n,f^\dagger) +  \sum_{i \in I} D_i(T_i u^n,f_i^\dagger) \leq J_{\lambda^\dagger}(u^\dagger,f^\dagger)+ \epsilon$.

Hence the source condition holds for $u=u^n$ and the difference of the reduced objective functionals as above can be estimated by
\begin{align*}
J_{\lambda^\dagger,I^c}(u^\dagger,f^\dagger) -J_{\lambda^\dagger,I^c}(u^n,f^\dagger)  
& =  -  D^\xi_{J_{\lambda^\dagger,I^c}(\cdot,f^\dagger)}(u^n,u^\dagger)  - \langle \xi,u^n -u^\dagger \rangle \\
&\leq (\beta_1 - 1)D^\xi_{J_{\lambda^\dagger,I^c}(\cdot,f^\dagger)}(u^n,u^\dagger)+\beta_2 \sum_{i \in I} D_i\big(T_iu^n,T_iu^\dagger\big)^  {1/p_i}.
\end{align*}
Plugging this estimate in Equation \eqref{eq:data_j_estimate_rates_proof_general}, the assertion follows.
\end{proof}
As mentioned above, the parameters $p_i\geq 1$ are introduced for the case that the data discrepancies are powers of norms. Setting all $p_i=1$, an abstract estimate on convergence rates follows immediately from the previous theorem.
\begin{prop} \label{prop:general_convergence_rates_case_pi_1} With the assumptions of Theorem \ref{thm:general_convergence_rates}, set $p_i = 1$ for $i \in I$. Then, with $(u^n)_n$  a sequence of solutions to $P(\lambda^n,f^n)$ we obtain for $j\in I$
\begin{align} 
D_{J_{\lambda^\dagger,I^c}(\cdot,f^\dagger)}^\xi(u^n,u^\dagger)
&= O\Bigg( \sum_{i \in I}\big( \lambda_i^n\delta_i^n+\psi_i(\delta_i^n)\big)    +\sum_{i \in I^c}  \big(  \psi_i(\delta_i^n) +  \phi_i(\delta^n_i) \big)  \Bigg), \label{eq:bregman_rates}
\\
D_j(T_ju_j^n,f_j^n) 
&= O\Bigg( (\lambda_j^n)^{-1} \bigg( \sum_{i \in I}\big(\lambda_i^n\delta_i^n + \psi_i(\delta_i^n)\big)  +\sum_{i \in I^c} \big(  \psi_i(\delta_i^n) +  \phi_i(\delta^n_i) \big)   \bigg) \bigg) . \label{eq:data_term_rates}
\end{align}
\begin{proof}
First note that, by Assumption $S_2(U_0,f^\dagger)$ we can again estimate, for $i \in I$,
\[ D_i (T_i u^n,f^\dagger_i) \leq C\bigg( D_i (T_i u^n,f^n_i) + \psi_i(\delta_i^n)\bigg) 
\]
It then follows from the assertion of Theorem \ref{thm:general_convergence_rates} that after finitely many indices
\begin{multline*}
\sum_{i\in I}  (\lambda_i^n - C \beta_2) D_i (T_i u^n,f^n_i) + (1-\beta_1) D^\xi_{J_{\lambda^\dagger,I^c}(\cdot,f^\dagger)} (u^n,u^\dagger)  \\
\leq C \bigg( \sum_{i \in I} (\lambda_i^n\delta_i^n + \psi_i(\delta_i^n))  + \sum_{i \in I^c}( \psi(\delta^n_i) + \phi_i(\delta_i^n)) \bigg).
\end{multline*}
Using non-negativity of all involved terms (for sufficiently large $n$), the assertion follows as direct consequence.
\end{proof}
\end{prop}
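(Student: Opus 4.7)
The plan is to derive both claimed rates as direct corollaries of the main estimate of Theorem~\ref{thm:general_convergence_rates} once specialized to $p_i=1$. Writing that estimate out in the present setting gives, after at most finitely many indices,
\[\sum_{i\in I}\lambda_i^n D_i(T_iu^n,f_i^n) + (1-\beta_1)D^\xi_{J_{\lambda^\dagger,I^c}(\cdot,f^\dagger)}(u^n,u^\dagger) \leq \beta_2\sum_{i\in I}D_i(T_iu^n,T_iu^\dagger) + C\bigg(\sum_{i\in I}\lambda_i^n\delta_i^n + \sum_{i\in I^c}\big(\psi_i(\delta_i^n)+\phi_i(\delta_i^n)\big)\bigg).\]
The right-hand side is nearly of the desired form, but it still features $D_i(T_iu^n,T_iu^\dagger)$, which must be controlled by quantities appearing in the claimed $O$-bounds.

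To effect that control I would exploit $T_iu^\dagger=f_i^\dagger$ for $i\in I$ and apply the \emph{lower} inequality in Assumption~\ref{ass:stability_2} with $v=u^n$, $f_i=f_i^n$, recalling that $D_i(f_i^\dagger,f_i^n)=\delta_i^n$ since $p_i=1$:
\[D_i(T_iu^n,f_i^n) \geq D_i(T_iu^n,f_i^\dagger)\big(1-\psi_i(\delta_i^n)\big) - \psi_i(\delta_i^n).\]
Because $\psi_i(\delta_i^n)\to 0$, we have $1-\psi_i(\delta_i^n)\geq 1/2$ for all but finitely many $n$, and rearranging yields $D_i(T_iu^n,T_iu^\dagger)\leq 2 D_i(T_iu^n,f_i^n)+2\psi_i(\delta_i^n)$. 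Inserting this into the estimate above and transposing the term $2\beta_2\sum_{i\in I}D_i(T_iu^n,f_i^n)$ to the left-hand side produces
\[\sum_{i\in I}(\lambda_i^n - 2\beta_2)D_i(T_iu^n,f_i^n) + (1-\beta_1)D^\xi_{J_{\lambda^\dagger,I^c}(\cdot,f^\dagger)}(u^n,u^\dagger) \leq C\bigg(\sum_{i\in I}\big(\lambda_i^n\delta_i^n+\psi_i(\delta_i^n)\big) + \sum_{i\in I^c}\big(\psi_i(\delta_i^n)+\phi_i(\delta_i^n)\big)\bigg).\]

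Because $\lambda_i^n\to\infty$ for $i\in I$, the coefficients $\lambda_i^n-2\beta_2$ are positive and in fact comparable to $\lambda_i^n$ once $n$ is sufficiently large. Both summands on the left-hand side are then non-negative, so the two assertions fall out simultaneously: dropping the data terms yields the Bregman-distance bound \eqref{eq:bregman_rates}, while dropping the Bregman term and all indices $i\in I\setminus\{j\}$ and then dividing by $\lambda_j^n-2\beta_2\sim\lambda_j^n$ yields the data-term bound \eqref{eq:data_term_rates}. The only delicate point is invoking the \emph{lower} half of Assumption~\ref{ass:stability_2} (its upper counterpart already did work inside the proof of Theorem~\ref{thm:general_convergence_rates}): one must verify that the factor $1-\psi_i(\delta_i^n)$ is bounded away from zero, which is precisely what forces the rates to hold only asymptotically and legitimizes the absorption on the left-hand side.
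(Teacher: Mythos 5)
Your proposal is correct and takes essentially the same route as the paper: the paper likewise specializes the estimate of Theorem~\ref{thm:general_convergence_rates} to $p_i=1$, bounds $D_i(T_iu^n,T_iu^\dagger)=D_i(T_iu^n,f_i^\dagger)$ by $C\big(D_i(T_iu^n,f_i^n)+\psi_i(\delta_i^n)\big)$ via Assumption~\ref{ass:stability_2}, absorbs the resulting $\beta_2$ term into the left-hand side to get coefficients $\lambda_i^n-C\beta_2$, and concludes by non-negativity of all terms for large $n$. Your explicit use of the lower inequality in $S_2$ with the factor $1-\psi_i(\delta_i^n)\geq 1/2$ merely makes precise the constant the paper leaves implicit.
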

Before considering a more concrete result for convergence rates, we discuss the source condition \eqref{eq:source_condition_general}, which is a direct extension of a standard source condition for single, norm discrepancy terms in the context of non-linear inverse problems in Banach spaces, see \cite{Hofmann07_nonlinear_tikhonov_banach}. The reason for introducing the power $1/p_i$ in the condition is to obtain compatibility with existing source conditions in the case that $D_i(v,f) = \|v-f\|^ {q_i}_{Y_i}$, by choosing $p_i = q_i$. Obviously, when letting the noise on all data components convergence to zero (still potentially at different rates), i.e., choosing $I=\{1,\ldots,N\}$, $J_{\lambda^\dagger,I^ c}(\cdot,f^\dagger)$ reduces to $R$ and the source condition coincides with the one of \cite{Hofmann07_nonlinear_tikhonov_banach}. As discussed for instance there or in \cite{scherzer2009variationalmethods}, under some additional assumptions, this source condition then is equivalent to more classical source conditions \cite{Engl96_book_regularization_ip}.

The following remark, whose proof is direct, allows for an interpretation of the source condition also in the most general setting.
\begin{rem} Take $u^ \dagger\in X$, $f^ \dagger \in Y$, $\lambda^ \dagger \in (0,\infty]^ N$ and $I \subset \{ 1,\ldots,N\}$ such that $\lambda_i ^ \dagger < \infty $ for $i \in I^ c$. With  $ \xi \in \partial \Big(J_{\lambda^\dagger,I^c}(\cdot,f^\dagger)\Big)(u^\dagger)$ and constants $0<\beta_1<1$, $0<\beta_2$, $p_i \in [1,\infty)$ for $i \in I$, we have, for all $u \in X$, 
 \begin{equation*}
- \langle\xi, u-u^\dagger\rangle \leq \beta_1 D_{J_{\lambda^\dagger,I^ c}(\cdot,f^\dagger)}^\xi(u,u^\dagger) +\beta_2 \sum_{i \in I} D_i\big(T_iu,T_iu^\dagger\big)^{1/p_i}
\end{equation*}
if and only if
 \begin{equation*}
 J_{\lambda^\dagger,I^ c}(u^ \dagger,f^\dagger) \leq J_{\lambda^\dagger,I^ c}(u,f^\dagger)  + \beta_2 \sum_{i \in I} D_i\big(T_iu,T_iu^\dagger\big)^{1/p_i} + (\beta_1-1)D_{J_{\lambda^\dagger,I^ c}(\cdot,f^\dagger)}^\xi(u,u^\dagger).
\end{equation*}
\end{rem}
The second equation can be interpreted such that if $u$ deviates from the equality constraints $T_i u = f^ \dagger_i$ for $i \in I$, the cost of the respective data term needs to increase at least as fast as the Bregman distance of $u$ to $u^\dagger$ with respect to the reduced objective functional.

We now consider more concrete assertions on convergence rates.
It is easy to see that, in the assertion of Proposition \ref{prop:general_convergence_rates_case_pi_1}, the choice $f_i^n = f_i^\dagger$ and $\lambda_i^n = \lambda_i^\dagger$ for $i\in I^c$ allows to choose $\phi_i \equiv 0$ and improves the convergence rates accordingly.  More particularly, in the case that the noise on every component goes to zero, i.e., $I^c = \emptyset$, we get the following corollary.

\begin{cor}\label{cor:convergence_rates_cor_Ic_empty}
With the assumptions of Theorem \ref{thm:general_convergence_rates}, set $I= \{1,\ldots,N\}$, $p_i=1$ for all $i$ and let $(u^n)_n$ be a sequence of solutions to $P(\lambda^n,f^n)$ and $u^\dagger$ be an $R$-minimizing solution of $T u = f^\dagger$ such that $S_2(U_{n_0},f^\dagger)$ holds for some $n_0$.
Then, after at most finitely many $n\geq 0$,
\begin{equation}
R(u^n) \leq R(u^\dagger)+ C \sum_{i=1}^ N \lambda_i^n \delta_i^n
\end{equation}
If we assume further that there exists 
\[\xi \in \partial R(u^\dagger), \] constants $0<\beta_1<1$, $0<\beta_2$ such that
 \begin{equation}\label{eq:source_condition_general_cor_Ic_empty} \tag{SC}
- \langle\xi, u-u^\dagger\rangle \leq \beta_1 D_{R}^\xi(u,u^\dagger) +\beta_2 \sum_{i=1}^ N D_i\big(T_iu,T_iu^\dagger\big)
\end{equation}
for all $u$ satisfying $R(u) + \sum _{i=1}^N D_i(T_i u,f^\dagger_i) \leq R (u^\dagger)+ \epsilon$ for some $\epsilon > 0$, then we obtain for any $j \in \{1,\ldots,N\}$,
\begin{align} 
D_{R}^\xi(u^n,u^\dagger)
&= O\Bigg( \sum_{i=1}^ N (\lambda_i^n \delta_i^n + \psi_i(\delta_i^n) )     \Bigg),
\\
D_j(T_ju^n,f^n_j) 
&= O\Bigg( (\lambda_j^n)^{-1}\sum_{i=1}^N(\lambda_i^n \delta_i^n + \psi_i(\delta_i^n) ) \Bigg).
\end{align}
\proof
This follows from Theorem \ref{thm:general_convergence_rates} and Proposition \ref{prop:general_convergence_rates_case_pi_1} by choosing $I = \{1,\ldots,N\}$.
\end{cor}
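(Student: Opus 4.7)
The plan is to derive both assertions by direct specialization of Theorem \ref{thm:general_convergence_rates} and Proposition \ref{prop:general_convergence_rates_case_pi_1} to the choice $I = \{1,\ldots,N\}$, $I^c = \emptyset$, $p_i = 1$. The first step is to observe that when $I^c = \emptyset$ the reduced functional collapses: $J_{\lambda^\dagger,I^c}(\cdot,f^\dagger) = R$, since the sum of discrepancies over the empty index set vanishes. Consequently, a $J_{\lambda^\dagger,I^c}$-minimal solution of $T_Iu = f_I^\dagger$ is nothing but an $R$-minimizing solution of $Tu = f^\dagger$, matching the hypothesis of the corollary.

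Next, every sum of the form $\sum_{i \in I^c}(\cdots)$ appearing in the estimates of Theorem \ref{thm:general_convergence_rates} and Proposition \ref{prop:general_convergence_rates_case_pi_1} is an empty sum, hence zero. In particular, the auxiliary functions $\phi_i$ (which quantify the deviation $|\lambda_i^\dagger - \lambda_i^n|$ on the non-vanishing channels) are no longer required, which is why they do not appear in the hypotheses of the corollary. With $p_i = 1$ the terms $\lambda_i^n(\delta_i^n)^{p_i}$ in \eqref{eq:objective_functional_rates} reduce to $\lambda_i^n \delta_i^n$, so the first assertion $R(u^n) \leq R(u^\dagger) + C\sum_{i=1}^N \lambda_i^n \delta_i^n$ is exactly the specialization of \eqref{eq:objective_functional_rates}, valid after finitely many indices (the finite-index qualification is inherited from Theorem \ref{thm:general_convergence_rates}, where it was used to ensure the admissibility of the source condition along the sequence).

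For the second part, the source condition \eqref{eq:source_condition_general_cor_Ic_empty} is obtained from \eqref{eq:source_condition_general} by substituting $J_{\lambda^\dagger,I^c}(\cdot,f^\dagger) = R$ and $1/p_i = 1$. The admissibility set used there, namely $\{u : J_{\lambda^\dagger,I^c}(u,f^\dagger) + \sum_{i \in I} D_i(T_iu,f_i^\dagger) \leq J_{\lambda^\dagger}(u^\dagger,f^\dagger)+ \epsilon\}$, likewise reduces to $\{u : R(u) + \sum_{i=1}^N D_i(T_iu,f_i^\dagger) \leq R(u^\dagger)+ \epsilon\}$ once we note that $J_{\lambda^\dagger}(u^\dagger, f^\dagger) = R(u^\dagger)$ because $T_iu^\dagger = f_i^\dagger$ for $i \in I$. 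Plugging these reductions into \eqref{eq:bregman_rates} and \eqref{eq:data_term_rates} of Proposition \ref{prop:general_convergence_rates_case_pi_1} yields precisely the claimed rates
\[
D_R^\xi(u^n,u^\dagger) = O\!\Big(\sum_{i=1}^N (\lambda_i^n \delta_i^n + \psi_i(\delta_i^n))\Big), \qquad D_j(T_ju^n,f_j^n) = O\!\Big((\lambda_j^n)^{-1}\sum_{i=1}^N (\lambda_i^n \delta_i^n + \psi_i(\delta_i^n))\Big).
\]

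There is essentially no obstacle here: the work has been done in Theorem \ref{thm:general_convergence_rates} and Proposition \ref{prop:general_convergence_rates_case_pi_1}, and the corollary is a transparent bookkeeping specialization. The only points to verify are the two identifications $J_{\lambda^\dagger,I^c} = R$ and $J_{\lambda^\dagger}(u^\dagger,f^\dagger) = R(u^\dagger)$, both of which are immediate from $I^c = \emptyset$ and from $u^\dagger$ being a solution of $Tu = f^\dagger$, respectively.
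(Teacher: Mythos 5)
Your proposal is correct and takes essentially the same route as the paper, whose entire proof is the one-line specialization of Theorem \ref{thm:general_convergence_rates} and Proposition \ref{prop:general_convergence_rates_case_pi_1} to $I=\{1,\ldots,N\}$. Your explicit verifications---that $J_{\lambda^\dagger,I^c}(\cdot,f^\dagger)=R$ for $I^c=\emptyset$, that the $\phi_i$ terms and all $I^c$-sums vanish, and that $J_{\lambda^\dagger}(u^\dagger,f^\dagger)=R(u^\dagger)$ since $D_i(T_iu^\dagger,f_i^\dagger)=0$ for all $i$---simply spell out the bookkeeping the paper leaves implicit.
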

The previous result provides, in a general setting, an answer to the second question posed in the introduction: Under appropriate assumptions, convergence rates can be obtained for the multi-discrepancy setting and the estimates above provide explicit information on the interplay of the different channels and discrepancy terms with respect to convergence rates. Next, we will discuss a first example in the general setting how this interplay can be exploited with appropriate parameter choice strategies in order to improve convergence rates. This is a first answer to the thirst question of the introduction, for which a more detailed discussion with concrete examples of data discrepancies is provided in the subsequent section.

Consider the particular case that the noise level for all components is given as $\delta_i^n \sim (\delta^n)^{\mu_i}$ for some $\delta^n \rightarrow 0$ and $\mu_i \geq 1$ and that $\psi_i (x) \sim x$ for all $i$. 
It is immediate that using the same $\delta_i^n$-dependent parameter choice for all $\lambda_i^n$, say  $\lambda_i^n \sim (\delta_i^n)^{-(1-\epsilon)} $ for $\epsilon \in (0,1)$, yields the rates
\begin{align*} 
D_{R}^\xi(u^n,u^\dagger) = O\Bigg( (\delta^n)^{\epsilon \min_i\{ \mu_i\} }  \Bigg) \qquad 
D_j(T_ju_j^n,f_j) = O\Bigg((\delta^ n)^{\mu_j(1- \epsilon)+ \epsilon \min_i \{ \mu_i  \} }  \Bigg) .
\end{align*}
That is, the rates for both $D_R^\xi$ and $D_j$ are affected by the worst of all $\mu_i$. In addition, choosing $\epsilon \rightarrow 1$,  gives the best rate for $D_R^\xi$, however worsens the rate for $D_j$ whenever $\mu_j > \min_i \{\mu_i\}$.

As will be shown in the following corollary, adapting the rates of each parameter to the interplay of the $\mu_i$ allows to improve upon this. In particular, it allows to obtain the rate $(\delta^ n)^{\mu_j}$ for each $\mu_j$ and a slightly improved rate for $D_R^\xi$. This is achieved in the more general setting that $\psi_i (x) \sim x^{\nu_i}$ with potentially different $\nu_i \in (0,1]$, which will be relevant for applications with different noise characteristics yielding different discrepancy terms.

\begin{cor}\label{cor:convergence_rates_improved}
With the assumptions of Corollary \ref{cor:convergence_rates_cor_Ic_empty}, suppose that the  $\psi_i$ in Assumption \ref{ass:stability_2} are such that
\[ \psi_i (x) \sim x^ {\nu_i} \]
and that
\[ \delta_i^n \sim  (\delta^n)^{\mu_i} \]
with a sequence $(\delta^n)_n$ in $(0,\infty)$ such that $\delta^n \rightarrow 0$, $\mu_i \geq 1$ and $\nu_i \in (0,1]$. 
Define $\eta_i = \mu_i \nu_i$ and $\eta_{\min} = \min _i \{\eta_i\} $ and take one $i_0$ such that $  \mu_{i_0} = \min \{ \mu_i \,|\,  \eta_i = \eta_{\min} \}$. Assume that $\nu_{i_0} < 1$. Then, with
\[\epsilon_i = \frac{\eta_{\min}}{\mu_i}\]
for all $i$ it follows that $\epsilon_i \in (0,1)$ and with the parameter choice
\[ \lambda^n_i \sim  (\delta^n_i)^{-(1-\epsilon_i)},\quad \text{for all }i,\]
we obtain
\begin{equation}
R(u^n) \leq R(u^\dagger)+ C (\delta^n)^{\eta_{\min} }.
\end{equation}
If we again assume that the source condition \eqref{eq:source_condition_general_cor_Ic_empty} of Corollary \ref{cor:convergence_rates_cor_Ic_empty} holds, then we obtain for any $j \in \{1,\ldots,N\}$,
\begin{align} 
D_{R}^\xi(u^n,u^\dagger)
&= O\Big( (\delta^n)^{\eta_{\min}  }    \Big),
\\
D_j(T_ju^n,f^n_j) 
&= O\Big( (\delta^n)^{\mu_j}  \Big)  .
\end{align}
This choice of the $\epsilon_i$ is optimal for the estimates on $D_j$ and $D_R^\xi$ given in Corollary \ref{cor:convergence_rates_cor_Ic_empty} in the sense that they cannot be improved by any other parameter choice of the form $\lambda_i^n = (\delta_i^n)^{-(1-\tilde{\epsilon}_i)}$ with $\tilde{\epsilon} _i \in (0,1)$.
\end{cor}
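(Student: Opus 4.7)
The plan is to reduce the claim to the explicit rate bounds already established in Corollary \ref{cor:convergence_rates_cor_Ic_empty} by directly computing the scaling of each term under the prescribed parameter choice, and then to handle optimality by a case distinction on the generic exponents $\tilde{\epsilon}_i$.

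First, I would verify that $\epsilon_i \in (0,1)$ for every $i$. Positivity is immediate from $\eta_{\min}>0$ and $\mu_i \geq 1$. For the strict upper bound one has to distinguish two cases. If $\eta_i > \eta_{\min}$, then $\mu_i > \eta_{\min}/\nu_i \geq \eta_{\min}$, so $\epsilon_i<1$. If $\eta_i = \eta_{\min}$, the defining property of $i_0$ forces $\mu_i \geq \mu_{i_0}$; combined with $\mu_i \nu_i = \eta_{\min} = \mu_{i_0}\nu_{i_0}$ this yields $\nu_i \leq \nu_{i_0} < 1$, so $\epsilon_i = \eta_{\min}/\mu_i = \nu_i < 1$ as well. (This is also where the assumption $\nu_{i_0}<1$ is used in an essential way.)

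Next, with the chosen parameter scaling I would compute $\lambda_i^n \delta_i^n \sim (\delta^n)^{-\mu_i(1-\epsilon_i)+\mu_i} = (\delta^n)^{\mu_i \epsilon_i} = (\delta^n)^{\eta_{\min}}$ and $\psi_i(\delta_i^n) \sim (\delta^n)^{\mu_i \nu_i} = (\delta^n)^{\eta_i} \leq (\delta^n)^{\eta_{\min}}$ eventually in $n$. Plugging these into the estimates of Corollary \ref{cor:convergence_rates_cor_Ic_empty} gives
\[
\sum_{i=1}^N\big(\lambda_i^n \delta_i^n + \psi_i(\delta_i^n)\big) = O\big((\delta^n)^{\eta_{\min}}\big),
\]
which immediately yields the claimed rates for $R(u^n)-R(u^\dagger)$ and, under the source condition, for $D_R^\xi(u^n,u^\dagger)$. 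For the data-term rates one uses $(\lambda_j^n)^{-1} \sim (\delta^n)^{\mu_j(1-\epsilon_j)}$ and $\mu_j(1-\epsilon_j)+\eta_{\min} = \mu_j - \eta_{\min} + \eta_{\min} = \mu_j$.

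Finally, for the optimality claim I would insert the generic choice $\lambda_i^n \sim (\delta_i^n)^{-(1-\tilde{\epsilon}_i)}$ into the same estimates. Setting $M(\tilde{\epsilon}) := \min_i \min\{\mu_i \tilde{\epsilon}_i,\eta_i\}$, the rate exponent for $D_R^\xi$ is $M(\tilde{\epsilon})$ and the one for $D_j(T_ju^n,f_j^n)$ is $\mu_j(1-\tilde{\epsilon}_j)+M(\tilde{\epsilon})$. Since $M(\tilde{\epsilon}) \leq \eta_{\min}$ always, the $D_R^\xi$-rate $\eta_{\min}$ cannot be improved. To also preserve $M(\tilde{\epsilon})=\eta_{\min}$ one needs $\mu_i \tilde{\epsilon}_i \geq \eta_{\min}$, i.e.\ $\tilde{\epsilon}_i \geq \epsilon_i$, for every $i$; but the $D_j$-rate $\mu_j(1-\tilde{\epsilon}_j)+M(\tilde{\epsilon})$ is strictly decreasing in $\tilde{\epsilon}_j$, so any such choice with $\tilde{\epsilon}_j>\epsilon_j$ strictly worsens the $D_j$-rate. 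Thus $\tilde{\epsilon}_i=\epsilon_i$ is the unique simultaneously optimal choice.

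The routine part is the computation of the scalings; the only genuinely delicate point I anticipate is the verification that $\epsilon_i < 1$ when $\eta_i = \eta_{\min}$, since this relies on extracting information about all minimizers of $\eta_i$ from the two-step selection of $i_0$ plus the hypothesis $\nu_{i_0}<1$. Once that is in place, both the rate computation and the optimality argument are mechanical.
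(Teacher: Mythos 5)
Your plan is correct and follows essentially the same route as the paper: the same two-case verification that $\epsilon_i \in (0,1)$ (your treatment of the case $\eta_i = \eta_{\min}$ via $\mu_i \geq \mu_{i_0}$ and $\nu_i = \eta_{\min}/\mu_i \leq \nu_{i_0} < 1$ is in fact more explicit than the paper's, which only remarks that ``necessarily $\nu_i<1$''), the same direct substitution of the parameter scaling into the bounds of Corollary \ref{cor:convergence_rates_cor_Ic_empty}, and the same reading-off of optimality from those bounds. The only cosmetic difference is the $D_j$-optimality step: the paper observes unconditionally that the $j$th summand $(\lambda_j^n)^{-1}\lambda_j^n\delta_j^n \sim (\delta^n)^{\mu_j}$ caps the $D_j$-exponent at $\mu_j$ for \emph{every} parameter choice, which in your notation is the one-line inequality $\mu_j(1-\tilde{\epsilon}_j)+M(\tilde{\epsilon}) \leq \mu_j(1-\tilde{\epsilon}_j)+\mu_j\tilde{\epsilon}_j = \mu_j$ following from $M(\tilde{\epsilon}) \leq \mu_j\tilde{\epsilon}_j$ --- marginally cleaner than your conditional argument restricted to choices preserving $M(\tilde{\epsilon}) = \eta_{\min}$, though your version additionally yields uniqueness of the optimal exponents within the power-law family.
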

\begin{rem} Before providing the proof, we note that, for the sake of simplicity, we have left out the case that $\nu_{i_0}=1$ with $\nu_{i_0}$ as above. In this case, one has to choose $\epsilon_i = \epsilon \frac{\eta_{\min}}{\mu_i}$ with $\epsilon \in (0,1)$, which allows to approximate the same rates in the limit $\epsilon \rightarrow 1$.

\end{rem}
\begin{proof}
At first we show that indeed $\epsilon_i < 1$: In case $i$ is such that $\eta_i > \eta_{\min}$ this follows from
$\epsilon_i= \frac{\eta_{\min}}{\eta_i}\nu_i \leq \frac{\eta_{\min}}{\eta_i} < 1$. In the other case, we note that necessarily $\nu_i<1$ hence $\epsilon_i < \frac{\eta_{\min}}{\eta_i} \leq 1$.

Plugging in the proposed parameter choice in the rates of Corollary \ref{cor:convergence_rates_cor_Ic_empty} the claimed rates follow by direct computation. Regarding their optimality, we note that for any choice of $\tilde{\epsilon}_i$, the rate for $\D_R^\xi$ is given as 
\[ O \Big( \sum_{i=1}^N (\delta^n)^{\tilde{\epsilon}_i \mu_i} + (\delta^n)^{\eta_i}  \Big)\]
which cannot be better that $(\delta^n)^{\eta_{\min}}$.

Similar, by considering the $j$th summand in the rate for $D_j$ it is immediate that the rate cannot be better than $(\delta^n)^{\mu_j}$, which is achieved with the proposed parameter choice.
\end{proof}

\section{Specification of the data term}

\subsection{Power-of-norm discrepancies} \label{subsec:norm_discrepancies}

In this subsection, we consider the particular case that all discrepancy terms $D_i$ are powers of norms, i.e., $D_i(u,f) = \|u-f\|_{Y_i}^{p_i}$ with $p_i \in [1,\infty)$ and $J_\lambda(u,f) = R(u) + \sum_{i=1}^N\lambda_i \|T_iu - f_i\|_{Y_i}^{p_i}$. That is, we consider the minimization problem
\begin{equation} \label{eq:min_problem_norm}
\min _{u \in \domain(T)} R(u) + \sum_{i=1}^N \lambda_i \|T_i u - f \|_{Y_i}^{p_i}  \tag*{$P_N(\lambda,f)$}
\end{equation}

In this particular case, we choose the topologies $\tau_{D_i}$ to be the trivial topologies and hence $D_i$-convergence is equivalent to convergence in the norm $\|\cdot \|_{Y_i}$. As we will see, assumption \ref{ass:main_assumptions} then reduces as follows.

\begin{ass}{(N)}[Norm discrepancies, $\tau_D$ the trivial topology] \label{ass:norm_assumptions} 
\textcolor{white}{as}
\begin{enumerate}
\item $\|\cdot \|_{Y_i}$ is sequentially lower semi-continuous w.r.t.  $\tau_{Y_i}$, and addition and scalar multiplication are continuous in the topology $\tau_{Y_i}$.
\item $R$ is lower semi-continuous w.r.t $\tau_X$.
\item The operators $T_i$ are continuous from $(X,\tau_X)$ to $(Y_i,\tau_{Y_i})$ and $\domain(T_i)$ is closed w.r.t convergence in $\tau_X$.
\item For any $\lambda \in (0,\infty)^N$, $f \in Y$ and any sequence $(u^n)_n$ we have that $J_\lambda(u^n,f)< c$ implies that $(u^n)_n$ admits a $\tau_X$-convergent subsequence.
\end{enumerate}
\end{ass}
\begin{prop} For each $i \in \{1,\ldots,N\}$, set $D_i(v,f) := \|v-f\|_{Y_i}^{p_i}$ with $p_i \in [1,\infty)$, and define $\tau_D$ to be the trivial topology. Then, if Assumption \ref{ass:norm_assumptions} holds, also Assumption \ref{ass:main_assumptions} is satisfied.
\proof We consider the particular points of Assumption \ref{ass:main_assumptions} with the numbering given there. The Point 1. obviously holds. For two sequences $(a_i^n)_n, (f_i^n)_n$ in $Y_i$ converging to $a _i, f_i \in Y_i$ with respect to $\tau_{Y_i}$ and norm convergence, respectively, we get that
\[ \liminf_{n\rightarrow \infty}\|a_i^n - f^n_i\|_{Y_i} \geq  \liminf_{n\rightarrow \infty} \big( \|a_i^n - f_i\|_{Y_i} - \|f_i^n - f_i\|_{Y_i}\big) = \liminf_{n\rightarrow \infty}  \|a_i^n - f_i\|_{Y_i} \geq  \|a_i - f_i\|_{Y_i}\]
and hence, by monotonicity and continuity of the mapping $x \mapsto x^{p_i}$ on $[0,\infty)$ Point 2. in Assumption \ref{ass:main_assumptions} holds. The Points 3. and 4. remain unchanged. Regarding Point 5., take $\lambda >0$,  $(f^n)_n$ to be sequence $D$-converging to $f \in Y$ and take $(u^n)_n $ to be a sequence such that $J_\lambda(u^n,f^{n}) < c $, with $c>0$. By the triangle-inequality we get that 
\[ J_\lambda(u^n,f) \leq J_\lambda(u^n,f^{n}) +   \sum _{i=1}^N 2 ^ {p_i-1}\lambda_i  \|f_i^ {n} - f\|_Y^{p_i}  \]
which is bounded, hence by assumption $(u^ n)_n $ admits a $\tau_X$-convergent subsequence.
\end{prop}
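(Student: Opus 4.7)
The plan is to verify each of the five conditions in Assumption \ref{ass:main_assumptions} under the hypotheses of Assumption \ref{ass:norm_assumptions}, noting that with $\tau_{D_i}$ trivial, the condition $f^n_i \overset{D_i}{\to} f_i$ reduces simply to norm convergence together with $\|f^n_i - f_i\|_{Y_i}^{p_i} \to 0$, which by continuity of $x \mapsto x^{1/p_i}$ on $[0,\infty)$ is equivalent to $\|f^n_i - f_i\|_{Y_i} \to 0$. Conditions 3 and 4 of (G) are literally restatements of conditions 2 and 3 of (N), so nothing needs to be done. Condition 1 of (G) is immediate: $\|a-b\|_{Y_i}^{p_i} = 0$ forces $a=b$.

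For condition 2, I would take sequences $a^n \overset{\tau_{Y_i}}{\to} a$ and $f^n_i \to f_i$ in norm. By continuity of addition/scalar multiplication in $\tau_{Y_i}$, we have $a^n - f_i \overset{\tau_{Y_i}}{\to} a - f_i$, so $\tau_{Y_i}$-lower semi-continuity of $\|\cdot\|_{Y_i}$ gives $\liminf_n \|a^n - f_i\|_{Y_i} \geq \|a-f_i\|_{Y_i}$. Combined with the reverse triangle inequality
\[
\|a^n - f^n_i\|_{Y_i} \geq \|a^n - f_i\|_{Y_i} - \|f^n_i - f_i\|_{Y_i}
\]
and $\|f^n_i - f_i\|_{Y_i} \to 0$, we obtain $\liminf_n \|a^n-f^n_i\|_{Y_i} \geq \|a-f_i\|_{Y_i}$. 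Then monotonicity and continuity of $x \mapsto x^{p_i}$ on $[0,\infty)$ finishes the statement for $D_i$.

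Condition 5 is the only slightly substantial step: given $\lambda \in (0,\infty)^N$ and $(f^n)_n$ with $f^n \overset{D}{\to} f$ and $J_\lambda(u^n,f^n) < c$, I want to reduce to condition 4 of (N). The standard trick is the convexity estimate $(\alpha + \beta)^{p} \leq 2^{p-1}(\alpha^p + \beta^p)$ for $p\geq 1$ and $\alpha,\beta \geq 0$, applied via the triangle inequality to get
\[
\|T_i u^n - f_i\|_{Y_i}^{p_i} \leq 2^{p_i-1}\bigl(\|T_i u^n - f^n_i\|_{Y_i}^{p_i} + \|f^n_i - f_i\|_{Y_i}^{p_i}\bigr).
\]
Summing over $i$ and using that each $\|f^n_i - f_i\|_{Y_i}^{p_i} \to 0$ (hence is bounded), one controls $J_\lambda(u^n,f)$ by a constant, and condition 4 of (N) then produces the desired $\tau_X$-convergent subsequence. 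There is no real obstacle here; the only care needed is making sure the power $p_i \geq 1$ is used to justify the convexity inequality (for $p_i = 1$ the bare triangle inequality suffices) and to keep the exponents and constants consistent across the sum.
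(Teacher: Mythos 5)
Your proposal is correct and follows essentially the same route as the paper's proof: Points 1, 3, 4 are immediate, Point 2 via the reverse triangle inequality plus $\tau_{Y_i}$-lower semi-continuity of the norm and monotonicity/continuity of $x \mapsto x^{p_i}$, and Point 5 via the triangle inequality with the convexity bound $(\alpha+\beta)^{p} \leq 2^{p-1}(\alpha^{p}+\beta^{p})$ to reduce the varying-data coercivity to the fixed-data coercivity of Assumption (N). If anything, your handling of Point 5 is slightly more careful than the paper's displayed inequality, which as written omits the factor $2^{p_i-1}$ on the data terms of $J_\lambda(u^n,f^n)$ --- an inessential slip, since boundedness holds either way.
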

By the inverse triangle inequality and continuity of $x \mapsto x^{p_i}$ it is also clear that Assumption \ref{ass:stability_1} holds for any $u \in X$. Hence, in summary, for the case of power-of-norm discrepancies, we get the following simplified version of the general result in Theorem \ref{thm:convergence_vanishing_noise_general}, covering existence, stability and convergence for vanishing noise.
Obviously, the Corollaries \ref{cor:stability_general} and \ref{cor:convergence_vanishing_noise_general} then also hold in simplified versions. 

\begin{thm}[General convergence result for powers of norms] \label{thm:convergence_vanishing_noise_norm}
Suppose that Assumption \ref{ass:norm_assumptions} holds. Let $(f^n)_n$ in $Y$ and $f^\dagger \in Y$ be such that $\|f^\dagger - f^n \|_{Y_i} \rightarrow 0$ as $n\rightarrow \infty$, and, for $i \in \{1,\ldots,N\}$, define $\delta_i ^n = \|f^\dagger _i - f_i^n\|_{Y_i} $ such that, by assumption, $\delta_i^n \rightarrow 0$ as $n \rightarrow \infty$. 
Choose the parameters $\lambda^n = (\lambda_1^n,\ldots,\lambda_N^n) \in (0,\infty)^ N$ such that, for a given subset $I \subset \{ 1,\ldots,N\}$,
\begin{align*}
& \lambda_i^n \rightarrow \infty, \quad \text{as well as}\quad \lambda_i^n (\delta_i^n)^{p_i}  \rightarrow 0 
&\text{for }i \in I, \\
&\lambda_i^n \rightarrow \lambda_i ^\dagger \in (0,\infty) 
& \text{for }i \in I^c,
\end{align*} 
Further denote $\lambda^\dagger = \lim _{n \rightarrow \infty} \lambda ^n \in (0,\infty]^N$.

If $\domain(R) \cap \left(\bigcap_{i=1}^n \domain(T_i)  \right) \neq \emptyset$, then there exists a sequence $(u^n)_n$ of solutions to $P_N(\lambda,f^n)$, every such sequence of solutions possesses a $\tau_X$-convergent subsequence again denoted by $(u^n)_n$ with limit $u^\dagger$ such that
\begin{align*}
\|T_iu^n - f^n_i\|_{Y_i}  &= o((\lambda_i^n)^{-1/p_i}) \text{ for } i \in I \\
\lim_{n\rightarrow\infty} \|T_iu^n - f^n_i\|_{Y_i} &= \|T_iu^\dagger - f^\dagger_i\|_{Y_i} \text{ for } i \in I^c\\
\lim_{n\rightarrow\infty} R(u^n) &= R(u^\dagger)
\end{align*}
and every limit of a $\tau_X$-convergent subsequence of solutions is a solution to
\begin{equation} 
\min_{\substack{u \in \domain(T) \\ T_Iu=f_I }} J_{\lambda^ \dagger} (u,f^\dagger).
\end{equation}
\end{thm}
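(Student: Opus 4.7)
The strategy is to reduce the statement to Theorem \ref{thm:convergence_vanishing_noise_general} by specializing $D_i(v,f) = \|v-f\|_{Y_i}^{p_i}$ and letting $\tau_{D_i}$ be the trivial topology, and then translating the conclusions back into norm form. The preceding Proposition already establishes that Assumption \ref{ass:norm_assumptions} implies Assumption \ref{ass:main_assumptions} under this choice, so the general framework applies. Moreover, the parameter $\delta_i^n$ of Theorem \ref{thm:convergence_vanishing_noise_general} is by construction $D_i(f_i^\dagger,f_i^n)^{1/p_i} = \|f_i^\dagger - f_i^n\|_{Y_i}$, matching the definition in the present statement.

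The three remaining hypotheses of Theorem \ref{thm:convergence_vanishing_noise_general} I would verify are: existence of solutions to $P_N(\lambda^n,f^n)$ for each $n$, validity of Assumption $S_1$, and existence of a $J_{\lambda^\dagger,I^c}$-minimal solution to $T_I u = f_I^\dagger$. For the first, the assumption $\domain(R)\cap\bigcap_i \domain(T_i)\neq \emptyset$ provides a point at which $R$ and all $T_i$ are finite, so $J_{\lambda^n}(\cdot,f^n)$ is proper and Proposition \ref{prop:existence_standard} yields a minimizer. For $S_1$, note that with $\tau_{D_i}$ trivial, $D_i$-convergence is exactly norm convergence in $Y_i$, and the inverse triangle inequality gives
\[
\bigl|\|T_i u_0 - f_i\|_{Y_i} - \|T_i u_0 - f_i^\dagger\|_{Y_i}\bigr| \leq \|f_i - f_i^\dagger\|_{Y_i};
\]
combined with continuity of $x \mapsto x^{p_i}$ on $[0,\infty)$, this shows that $f_i \mapsto \|T_iu_0 - f_i\|_{Y_i}^{p_i}$ is continuous at $f_i^\dagger$ for \emph{every} $u_0 \in \domain(T)$, so $S_1(u_0,f^\dagger,I^c)$ holds unconditionally. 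Existence of a $J_{\lambda^\dagger,I^c}$-minimal solution is then furnished by Proposition \ref{prop:existence_j_minimizing_solution} whenever the feasibility set $\{u : T_I u = f_I^\dagger\}$ admits at least one point with finite reduced energy (otherwise the conclusion of the theorem is vacuous).

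With these verifications in hand, I would invoke Theorem \ref{thm:convergence_vanishing_noise_general} directly. Its conclusions translate as follows: for $i\in I$, the identity $\|T_i u^n - f_i^n\|_{Y_i}^{p_i} = D_i(T_i u^n,f_i^n) = o((\lambda_i^n)^{-1})$ yields $\|T_iu^n - f_i^n\|_{Y_i} = o((\lambda_i^n)^{-1/p_i})$; for $i\in I^c$, convergence of the $p_i$-th powers together with continuity of $x\mapsto x^{1/p_i}$ gives $\|T_iu^n - f_i^n\|_{Y_i} \to \|T_iu^\dagger - f_i^\dagger\|_{Y_i}$; convergence $R(u^n)\to R(u^\dagger)$ is immediate; and the limit is a minimizer of the constrained problem $\min_{T_I u = f_I} J_{\lambda^\dagger}(u,f^\dagger)$ because $J_{\lambda^\dagger,I^c}$-minimality under the constraint is equivalent to $J_{\lambda^\dagger}$-minimality under it. I do not anticipate any genuine obstacle: the proof is essentially a bookkeeping reduction, the only conceptually nontrivial point being that in the norm setting $S_1$ comes for free from the inverse triangle inequality and needs no a priori knowledge of the minimizer.
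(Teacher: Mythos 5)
Your proposal matches the paper's own treatment exactly: the paper proves this theorem only by remarking that the preceding proposition reduces Assumption \ref{ass:norm_assumptions} to Assumption \ref{ass:main_assumptions}, that Assumption \ref{ass:stability_1} holds for every $u$ by the inverse triangle inequality and continuity of $x \mapsto x^{p_i}$, and then invoking Theorem \ref{thm:convergence_vanishing_noise_general}; your translation of the conclusions (powers to roots via continuity of $x \mapsto x^{1/p_i}$, equivalence of constrained $J_{\lambda^\dagger,I^c}$- and $J_{\lambda^\dagger}$-minimality) is the same bookkeeping. If anything, you are slightly more careful than the paper in flagging that existence of a $J_{\lambda^\dagger,I^c}$-minimal solution to $T_I u = f_I^\dagger$ (via Proposition \ref{prop:existence_j_minimizing_solution}) remains an implicit hypothesis not supplied by the nonempty-domain condition alone.
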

Regarding convergence rates results, we remember that in the general case they rely on Assumption \ref{ass:stability_2}. For power-of-norm discrepancies, we now show that also this assumption is always satisfied. To this aim, we first need the following basic facts from analysis.
\begin{lem} \label{lem:basic_analysis} Let $a,b,\lambda >0$, $p,q \in (1,\infty)$ such that $\frac{1}{p}+\frac{1}{q}=1$. Then
\[|a+b|^p \leq a^p(1+\lambda^q)^{p-1} + b^p(1+\lambda^{-q})^{p-1} \]
Further, there exist constants $c,C>0$ such that for any $\lambda \in (0,1]$,
\[ c \lambda^q \leq (1+\lambda^q)^{p-1} - 1 \leq C \lambda^q \quad\text{and}\quad c \lambda^{-q(p-1)} \leq (1+\lambda^{-q})^{p-1} \leq C \lambda^{-q(p-1)} \]
\proof
The first inequality follows from the H\"older inequality, since
\[ |a+b|  = |a + \frac{b}{\lambda}\lambda| \leq \big(a^p + (\frac{b}{\lambda})^p\big)^{1/p}(1 + \lambda^q)^{1/q} \]
which implies
\[ |a+b|^p    \leq a^p (1 + \lambda^q)^{p-1} + \frac{b^p}{\lambda^p}(1 + \lambda^q)^{p-1}  = a^p (1 + \lambda^q)^{p-1} + b^p(1 + \lambda^{-q})^{p-1} \]
Using de l'Hospital's rule we further get
\[ \lim _{\lambda \rightarrow 0} \frac{(1+\lambda^q)^{p-1} - 1}{\lambda^q} = \lim _{\lambda \rightarrow 0}(p-1)(1+\lambda^q)^{p-2} = (p-1) \]
and obviously $ \lim _{\mu \rightarrow \infty} \frac{(1+\mu^q)^{p-1} }{\mu^{q(p-1)}} = 1$, hence the other two inequalities hold.
\end{lem}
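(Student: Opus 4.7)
The plan is to establish the three inequalities independently: the first is a weighted Hölder estimate, and the remaining two are elementary asymptotic bounds exploiting compactness of $[0,1]$.

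For the first inequality, I would apply Hölder's inequality after inserting the parameter $\lambda$ into the decomposition of $a+b$. Concretely, writing $a+b = a\cdot 1 + (b/\lambda)\cdot \lambda$ and using Hölder with conjugate exponents $p,q$ yields
\[
a+b \;\leq\; \bigl(a^p + (b/\lambda)^p\bigr)^{1/p}\bigl(1+\lambda^q\bigr)^{1/q}.
\]
Raising to the $p$-th power, and invoking the identity $p/q = p-1$ (a direct restatement of $1/p+1/q = 1$), gives $(a+b)^p \leq a^p(1+\lambda^q)^{p-1} + b^p\lambda^{-p}(1+\lambda^q)^{p-1}$. The second summand is rewritten as $b^p(1+\lambda^{-q})^{p-1}$ by factoring $\lambda^{-q}$ out of $1+\lambda^{-q}$ together with the conjugate identity $q(p-1)=p$.

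For the two-sided estimate of $(1+\lambda^q)^{p-1}-1$, I would consider $h(\lambda) = \bigl((1+\lambda^q)^{p-1}-1\bigr)/\lambda^q$. This is continuous and strictly positive on $(0,1]$, and $\lim_{\lambda\to 0^+} h(\lambda) = p-1>0$, either by L'Hôpital's rule or by a first-order Taylor expansion of $x\mapsto (1+x)^{p-1}$ at $x=0$. Hence $h$ extends to a continuous strictly positive function on the compact interval $[0,1]$ and therefore attains a strictly positive minimum and maximum, yielding the constants $c,C$. For the analogous estimate on $(1+\lambda^{-q})^{p-1}$, I would use the factorisation $(1+\lambda^{-q})^{p-1} = \lambda^{-q(p-1)}(1+\lambda^q)^{p-1}$ and observe that on $(0,1]$ the remaining factor $(1+\lambda^q)^{p-1}$ is trivially bounded between $1$ and $2^{p-1}$.

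The main obstacle is bookkeeping rather than any genuine analytic difficulty: one has to pick the right $\lambda$-weighted splitting of $a+b$ before applying Hölder, and then keep careful track of the two conjugate-exponent identities $p/q = p-1$ and $q(p-1)=p$ when rearranging the right-hand side into the form stated. Everything else reduces to a single Taylor expansion at $0$ and a compactness argument on $[0,1]$.
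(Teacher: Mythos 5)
Your proof is correct and follows essentially the same route as the paper: the identical $\lambda$-weighted H\"older splitting $a+b = a\cdot 1 + (b/\lambda)\cdot\lambda$ for the first inequality, and the limit $\lim_{\lambda\to 0^+}\bigl((1+\lambda^q)^{p-1}-1\bigr)/\lambda^q = p-1$ for the second, where your explicit compactness argument on $[0,1]$ merely spells out what the paper leaves implicit. Your handling of the third estimate via the factorisation $(1+\lambda^{-q})^{p-1} = \lambda^{-q(p-1)}(1+\lambda^q)^{p-1}$ with the trivial bounds $1 \leq (1+\lambda^q)^{p-1} \leq 2^{p-1}$ is a slight, harmless variant of the paper's limit $\lim_{\mu\to\infty}(1+\mu^q)^{p-1}/\mu^{q(p-1)}=1$, and in fact yields explicit constants.
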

This allows us to obtain the following estimates, which are a particular case of the inequalities required by Assumption \ref{ass:stability_2}.
\begin{prop}\label{prop:norm_inverse_triangle} Let $(Y,\|\cdot\|)$ be a normed space and $f,f^\dagger,v \in Y$ and $p,q\in (1,\infty)$ with $\frac{1}{p}+\frac{1}{q}=1$. Define $\delta = \|f - f^\dagger\|$. Then, there exits $\alpha>0$ and constants $c,C,d,D>0$ such that, whenever $\delta < \alpha$,
\begin{align*}
\|v-f\|^p  &\leq \|v-f^\dagger\|^p (1 + c\delta) + C\delta, \\
-\|v-f\|^p &\leq - \|v-f^\dagger\|^p (1 - d\delta) +  D\delta.
\end{align*}
\proof
Obviously, the results holds for $\delta=0$, hence we assume $\delta>0$. First we use Lemma \ref{lem:basic_analysis} to estimate for a general $\lambda \in  (0,1]$
\begin{equation} \label{eq:basic_estimate_norm_continuity}
\begin{aligned}
\|v-f\|^ p \leq \left( \|v-f^ \dagger\| + \delta \right)^ p 
&\leq \|v-f^ \dagger\|^p \bigg( (1+\lambda^q)^{p-1}- 1 + 1\bigg) + \delta^p(1+\lambda^{-q})^{p-1} \\
& \leq \|v-f^ \dagger\|^p (1 + C\lambda^ q) + C\delta^ p \lambda ^ {-q(p-1)}.
\end{aligned}
\end{equation}
Now we set $\lambda = \delta^\frac{1}{q}$ and get for $\delta$ sufficiently small
\[ 
\|v-f\|^ p \leq \left( \|v-f^ \dagger\| + \delta \right)^ p 
\leq \|v-f^ \dagger\|^p (1 + C\delta) + C\delta.
\]
Reordering the inequality \eqref{eq:basic_estimate_norm_continuity}, we obtain
\begin{align*}
-\|v-f^\dagger\|^ p 
& \leq - \|v-f^ \dagger\|^p (1 + C\lambda^ q)^{-1} + C\delta^ p \lambda ^ {-q(p-1)}(1 + C\lambda^ q)^{-1}
\end{align*}
and since $\lim_{\lambda \rightarrow 0} \frac{(1+C \lambda)^{-1}-1}{\lambda}=-C$, there exists $d>0$ such that for all $\lambda $ sufficiently small $(1+C\lambda)^{-1} \geq (1-d\lambda)$ and hence
\begin{align*}
-\|v-f^\dagger\|^ p 
& \leq - \|v-f^ \dagger\|^p (1 - d\lambda^ q) + C\delta^ p \lambda ^ {-q(p-1)}.
\end{align*}
Setting again $\lambda = \delta^\frac{1}{q}$ the result follows.
\end{prop}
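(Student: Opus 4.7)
The plan is to apply Lemma \ref{lem:basic_analysis} with a well-chosen $\lambda$ to balance the error terms. First, I would assume without loss of generality that $\delta>0$ (the case $\delta=0$ is trivial), and use the triangle inequality together with the lemma to obtain, for any $\lambda \in (0,1]$,
\begin{equation*}
\|v-f\|^p \leq \bigl(\|v-f^\dagger\| + \delta\bigr)^p \leq \|v-f^\dagger\|^p (1+\lambda^q)^{p-1} + \delta^p (1+\lambda^{-q})^{p-1}.
\end{equation*}
The asymptotic estimates in Lemma \ref{lem:basic_analysis} then give $(1+\lambda^q)^{p-1} \leq 1 + C\lambda^q$ and $(1+\lambda^{-q})^{p-1} \leq C \lambda^{-q(p-1)}$ for $\lambda \in (0,1]$.

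The key trick is to choose $\lambda = \delta^{1/q}$, which requires $\delta \leq 1$ (this fixes the choice of $\alpha$). With this choice one has $\lambda^q = \delta$ and $\delta^p \lambda^{-q(p-1)} = \delta^p \cdot \delta^{-(p-1)} = \delta$, so both error contributions scale like $\delta$. Plugging in yields the first inequality
\begin{equation*}
\|v-f\|^p \leq \|v-f^\dagger\|^p(1 + c\delta) + C\delta.
\end{equation*}

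For the second inequality I would rearrange the previous estimate (keeping $\lambda \in (0,1]$ arbitrary for the moment) to isolate $\|v-f^\dagger\|^p$: dividing through by $(1+C\lambda^q)$ gives
\begin{equation*}
\|v-f^\dagger\|^p (1+C\lambda^q)^{-1} \leq \|v-f\|^p \cdot (1+C\lambda^q)^{-1} \cdot \frac{(1+C\lambda^q)}{(1+C\lambda^q)}\ldots
\end{equation*}
(alternatively, one can obtain the same bound by applying the lemma directly with the roles of $f$ and $f^\dagger$ swapped, since $\|v-f^\dagger\| \leq \|v-f\|+\delta$). Either way, one ends up needing the elementary fact that $(1+C\lambda^q)^{-1} \geq 1 - d\lambda^q$ for $\lambda$ sufficiently small (which follows by l'Hôpital or Taylor expansion, analogous to the last inequality of Lemma \ref{lem:basic_analysis}). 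Setting again $\lambda = \delta^{1/q}$ then produces the bound
\begin{equation*}
-\|v-f\|^p \leq -\|v-f^\dagger\|^p(1-d\delta) + D\delta.
\end{equation*}

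The proof is essentially a careful bookkeeping exercise rather than presenting a genuine obstacle; the one subtle point is precisely the choice $\lambda = \delta^{1/q}$, which is what simultaneously drives the multiplicative error $\lambda^q$ and the additive error $\delta^p \lambda^{-q(p-1)}$ to the same order $\delta$. Any other scaling of $\lambda$ with $\delta$ would leave one of the two error terms of the wrong order and would not yield estimates linear in $\delta$.
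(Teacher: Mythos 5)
Your proposal is correct and follows essentially the same route as the paper's proof: the triangle inequality combined with Lemma \ref{lem:basic_analysis}, the balancing choice $\lambda = \delta^{1/q}$ (with $\alpha = 1$ so that $\lambda \in (0,1]$), and the elementary bound $(1+C\lambda^q)^{-1} \geq 1 - d\lambda^q$ for the second inequality. Your parenthetical alternative of applying the lemma with the roles of $f$ and $f^\dagger$ swapped, via $\|v-f^\dagger\| \leq \|v-f\| + \delta$, is in fact the cleaner way to land exactly on the stated lower bound (the paper's literal ``reordering'' of \eqref{eq:basic_estimate_norm_continuity} produces the same inequality with $f$ and $f^\dagger$ exchanged, which is equivalent here since $\delta = \|f - f^\dagger\|$ is symmetric), so the half-finished first display in your treatment of the second inequality is harmlessly superseded.
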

The previous proposition shows, in case all $D_i$-terms are powers of norms, Assumption \ref{ass:stability_2} holds again for any $U \subset X$, $f^\dagger$ with  $\psi_i(x)=x^\frac{1}{p_i}$. From this, we obtain the following convergence rates.

\begin{thm}\label{thm:norm_discrepancy_convergence_rates}
With the assumptions of Theorem \ref{thm:convergence_vanishing_noise_norm}, decompose the index set $I$ such that $I = J_1 \cup J_p$ where $J_1 = \{ i \in I \,|\, p_i = 1\}$, $J_p = \{ i \in I \,|\, p_i > 1\}$. Let the parameter choice for the indices $i\in I$ be such that
\begin{equation}
\begin{cases}
 \lambda_i^n  \sim (\delta_i^n)^{-(p_i - \epsilon_i)} &\text{if } p_i=1, \\
\lambda_i^n \sim (\delta_i^n)^{-(p_i - 1)} &\text{if } p_i>1 ,
\end{cases}
\end{equation}
with $\epsilon_i \in (0,1)$.
Let $(u^n)_n$ be a sequence of solutions to $P_N(\lambda,f^n)$ and $u^\dagger$ be a $J_{\lambda^\dagger,f^\dagger}$-minimizing solution of $T_I u = f_I^\dagger$.
Then
\begin{equation}
J_{\lambda^\dagger,I^c}(u^n,f^\dagger) \leq J_{\lambda^\dagger,I^c}(u^\dagger,f^\dagger)+ C \bigg( \sum_{i \in J_1} (\delta_i^ n)^ {\epsilon_i}  + \sum_{i \in J_p} \delta_i^n + \sum_{i \in I^c}( \delta^n_i + \phi_i(\delta_i^n)) \bigg).
\end{equation}
Assume further that there exists 
\[\xi \in \partial \Big(J_{\lambda^\dagger,I^c}(\cdot,f^\dagger)\Big)(u^\dagger) \] and constants $0<\beta_1<1$, $0<\beta_2$ such that
 \begin{equation}\label{equ_Bregman_assump_dualpairing_estimate} \tag{SC}
- \langle\xi, u-u^\dagger\rangle \leq \beta_1 D_{J_{\lambda^\dagger,I^c}(\cdot,f^\dagger)}^\xi(u,u^\dagger) +\beta_2 \sum_{i \in I} \|T_iu - T_iu^\dagger\|_{Y_i}
\end{equation}
for all $u$ satisfying $J_{\lambda^\dagger,I^c}(u,f^\dagger) + \sum_{i \in I} D_i (T_i u,f^\dagger) \leq J_{\lambda^\dagger}(u^\dagger,f^\dagger)+ \epsilon$ for some $\epsilon > 0$.
Then, for $j \in I$, it holds
\begin{equation}
D_{J_{\lambda^\dagger,I^c}(\cdot,f^\dagger)}^\xi(u^n,u^\dagger) =  O \bigg( \sum_{i \in J_1} (\delta_i^n)^{\epsilon_i} + \sum _{i \in J_p} \delta_i^n +    \sum_{i \in I^c} (\delta_i^n + \phi_i(\delta_i^n)) \bigg)
 \end{equation}
and, in case $p_j = 1$,
\begin{align}
\|T_ju_j^n - f_j\|_{Y_j}= O \bigg( (\delta_j^n)^{1-\epsilon_j} \big(\sum_{i \in J_1} (\delta_i^n)^{\epsilon_i} + \sum _{i \in J_p} \delta_i^n +    \sum_{i \in I^c} ( \delta_i^n + \phi_i(\delta_i^n))\big) \bigg),
\end{align}
and, in case $p_j > 1$,
\begin{align}
\|T_ju_j^n - f_j\|_{Y_j}^{p_j} = O \bigg( (\delta_j^n)^{p_j-1} \big(\sum_{i \in J_1} (\delta_i^n)^{\epsilon_i} + \sum _{i\in J_p} \delta_i^n +    \sum_{i \in I^c} ( \delta_i^n + \phi_i(\delta_i^n))\big) \bigg).
\end{align}
\proof
At first we note that, by Theorem \ref{thm:general_convergence_rates}, for all but finitely many indices $n$ we get
\begin{align*}
\sum_{i\in I}  \lambda_i^n  \|T_i u^n -
& f^n_i\|_{Y_i}^{p_i}  + (1-\beta_1) D^\xi_{J_{\lambda^\dagger,I^c}(\cdot,f^\dagger)} (u^n,u^\dagger)    \\
&\leq \beta_2 \sum_{i \in I} \|T_i u^n - f^\dagger_i\|_{Y_i} + C \sum_{i \in I} \lambda_i^n (\delta_i^n)^{p_i}  + C \sum_{i \in I^c} \big( \delta_i^n + \phi_i(\delta_i^n)\big) \\
&\leq \beta_2 \sum_{i \in I} \|T_i u^n - f^n_i\|_{Y_i} + C \sum_{i \in I} (\lambda_i^n \big(\delta_i^n)^{p_i}  + \delta_i^n\big)
 + C \sum_{i \in I^c} \big( \delta_i^n + \phi_i(\delta_i^n)\big) 
\end{align*}
Using that, by Young's inequality, for $i \in J_p$,
\[ (\beta_2(\lambda_i^n)^{-1/p_i})(\lambda_i^n)^{1/p_i}\|T_i u^n - f^n_i\|_{Y_i} \leq \frac{\lambda_i\|T_i u^n - f^n_i\|^{p_i}_{Y_i}}{p_i} + C\lambda_i^{-1/(p_i-1)} \]
we obtain
\begin{equation} \label{eq:general_estimates_norm_rates}
\begin{aligned}
c \sum_{i\in I}  \lambda_i^n  \|T_i u^n - 
&f^n_i\|_{Y_i}^{p_i}  + (1-\beta_1) D^\xi_{J_{\lambda^\dagger,I^c}(\cdot,f^\dagger)} (u^n,u^\dagger)   \\
&\leq C \sum _{J_p} \lambda_i^{-1/(p_i-1)} + C \sum_{i \in I} \big(\lambda_i^n (\delta_i^n)^{p_i}  + \delta_i^n\big) + C \sum_{i \in I^c} \big( \delta_i^n + \phi_i(\delta_i^n)\big) \\
&\leq C \bigg( \sum _{J_p} \delta_i^n +  \sum_{i \in J_1} (\delta_i^n)^{\epsilon_i}  +  \sum_{i \in I^c} \big( \delta_i^n + \phi_i(\delta_i^n)\big) \bigg).
\end{aligned}
\end{equation}
Estimating the left hand side below by the individual terms, the assertion follows.

\end{thm}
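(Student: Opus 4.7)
The strategy is to specialize Theorem \ref{thm:general_convergence_rates} to the power-of-norm setting. First I would record that, by Proposition \ref{prop:norm_inverse_triangle}, Assumption $S_2(U,f^\dagger)$ holds for every $U \subset X$ with the choice $\psi_i(x) = x^{1/p_i}$: indeed, writing $D_i(v,f) = \|v-f\|^{p_i}$ and $\delta = \|f-f^\dagger\|$, the two inequalities of $S_2$ reduce exactly to the estimates of that proposition with $\psi_i(D_i(f_i^\dagger,f_i)) = \psi_i(\delta^{p_i}) = \delta$. Hence all hypotheses of Theorem \ref{thm:general_convergence_rates} are in force, and together with the source condition it yields, after finitely many $n$,
\begin{equation*}
\sum_{i\in I}\lambda_i^n \|T_i u^n - f_i^n\|_{Y_i}^{p_i} + (1-\beta_1) D^\xi_{J_{\lambda^\dagger,I^c}(\cdot,f^\dagger)}(u^n,u^\dagger) \leq \beta_2 \sum_{i\in I} \|T_iu^n - T_iu^\dagger\|_{Y_i} + C\sum_{i\in I}\lambda_i^n(\delta_i^n)^{p_i} + C\sum_{i\in I^c}(\delta_i^n + \phi_i(\delta_i^n)).
\end{equation*}

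Next I would insert the prescribed parameter choice. With $T_iu^\dagger = f_i^\dagger$ for $i \in I$ and the second inequality of Proposition \ref{prop:norm_inverse_triangle}, the mixed term $\|T_iu^n - T_iu^\dagger\|_{Y_i}$ can be bounded by $\|T_iu^n - f_i^n\|_{Y_i} + \delta_i^n$. For $i \in J_p$ (where $p_i > 1$) this extra norm term is absorbed into the corresponding $\lambda_i^n\|T_iu^n-f_i^n\|_{Y_i}^{p_i}$ via Young's inequality at the cost of $\lambda_i^{-1/(p_i-1)}$, which by the choice $\lambda_i^n \sim (\delta_i^n)^{-(p_i-1)}$ is of order $\delta_i^n$; also $\lambda_i^n(\delta_i^n)^{p_i} \sim \delta_i^n$. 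For $i \in J_1$ the parameter $\lambda_i^n \sim (\delta_i^n)^{-(1-\epsilon_i)}$ dominates the constant $\beta_2$ for large $n$, so the mixed term is absorbed directly, and $\lambda_i^n\delta_i^n \sim (\delta_i^n)^{\epsilon_i}$. Collecting these contributions produces exactly the displayed bound on $J_{\lambda^\dagger,I^c}(u^n,f^\dagger)$, and the bound on the Bregman distance follows by reading off the second summand on the left.

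Finally, for the individual data-fidelity rates I would read off the corresponding $j$th summand on the left-hand side. If $p_j = 1$, dividing by $\lambda_j^n \sim (\delta_j^n)^{-(1-\epsilon_j)}$ yields a factor $(\delta_j^n)^{1-\epsilon_j}$ in front of the global error bound; if $p_j > 1$, dividing by $\lambda_j^n \sim (\delta_j^n)^{-(p_j-1)}$ yields a factor $(\delta_j^n)^{p_j-1}$ in front of the $p_j$-th power of the norm. In both cases the remaining factor is the same sum as in the Bregman bound, which matches the claimed rates.

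The only delicate point, and the main obstacle, is the bookkeeping around the term $\beta_2 \sum_{i\in I}\|T_iu^n - T_iu^\dagger\|_{Y_i}$ on the right of the source-condition estimate: it is a pure norm even though the data term is raised to the power $p_i$, so it must be absorbed asymmetrically on $J_1$ and $J_p$. The Young-inequality step on $J_p$ crucially uses that $\lambda_i^n$ blows up slowly enough that $\lambda_i^{-1/(p_i-1)}$ is still of the order of the other error terms, which is precisely why the choice $\lambda_i^n \sim (\delta_i^n)^{-(p_i-1)}$ is optimal; on $J_1$ one has to verify instead that $\beta_2 < \lambda_i^n$ eventually so that the linear term can be moved to the left without degrading the rate.
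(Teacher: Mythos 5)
Your proposal is correct and follows essentially the same route as the paper: identify $\psi_i(x)=x^{1/p_i}$ via Proposition \ref{prop:norm_inverse_triangle}, invoke Theorem \ref{thm:general_convergence_rates}, absorb the mixed term $\beta_2\sum_{i\in I}\|T_iu^n-f_i^n\|_{Y_i}$ asymmetrically (Young's inequality with cost $\lambda_i^{-1/(p_i-1)}\sim\delta_i^n$ on $J_p$, eventual domination $\beta_2<\lambda_i^n$ on $J_1$), and then read off the individual summands after substituting the parameter choice, exactly as in the paper. Two small corrections: the bound $\|T_iu^n-T_iu^\dagger\|_{Y_i}\leq\|T_iu^n-f_i^n\|_{Y_i}+\delta_i^n$ is just the plain triangle inequality (Proposition \ref{prop:norm_inverse_triangle} concerns the $p_i$-th powers and is not needed for this step), and the first displayed estimate on $J_{\lambda^\dagger,I^c}(u^n,f^\dagger)$ should be obtained directly from \eqref{eq:objective_functional_rates} in Theorem \ref{thm:general_convergence_rates} (it holds without the source condition) rather than by ``collecting contributions'' from the source-condition estimate, since a bound on the Bregman distance does not control $J_{\lambda^\dagger,I^c}(u^n,f^\dagger)-J_{\lambda^\dagger,I^c}(u^\dagger,f^\dagger)$ without also handling the dual pairing $\langle\xi,u^n-u^\dagger\rangle$.
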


We now consider the particular case that the noise on all components vanishes, i.e., $I= \{ 1,\ldots,N\}$, and that $p_i = 2$ for all $i$, which is typically chosen when the norms $\|\cdot \|_{Y_i}$ are two-norms and the noise is Gaussian.

 Assume again that the noise level of the different components is related as $(\delta_i^n) = (\delta^n)^{\mu_i}$ with $\delta^n \rightarrow 0$, $\delta^n \geq 0$ and $\mu_i \geq 1$ with $\mu_{i_0} = 1$ for one $i_0$ (the slowest rate). As a direct consequence of the previous theorem we get the rate $\delta^n$ for $D^\xi_R$ and $(\delta^n)^{\mu_j + 1}$ for $ \|T_ju^n - f_j\|_{Y_j}^{2} $. The following corollary shows that, again, by adapting the parameter choice to the interplay of the different noise levels, we can improve the rate for $ \|T_ju^n - f_j\|_{Y_j}^{2} $ without worsening the rate for $D_R^\xi$.

\begin{cor}\label{cor:two_norm_rates}
With the assumptions of Theorem \ref{thm:convergence_vanishing_noise_norm}, set the index set $I= \{1,\ldots,N\}$ and $p_i = 2$ for all $i$. Assume that the noise level for each component is such that
\[  \delta^n_i \sim (\delta^n)^{\mu_i} \] 
with $(0,\infty) \ni \delta^n \rightarrow 0$, $\mu_i \geq 1$ and $\mu_{i_0} = 1$ for some $i_0$. Let the parameter choice for all $i$ be such that
\begin{equation}
\lambda_i^n \sim (\delta_i^n)^{-(2 - 1/\mu_i)} .
\end{equation}
Let $(u^n)_n$ be a sequence of solutions to \ref{eq:min_problem_norm} and $u^\dagger$ be an $R$-minimizing solution of $T u = f^\dagger$.
Then
\begin{equation}
R(u^n) \leq R(u^\dagger)+ C \delta^ n.
\end{equation}
Assume further that there exists 
\[\xi \in \partial R(u^\dagger) \] and constants $0<\beta_1<1$, $0<\beta_2$ such that
 \begin{equation} \tag{SC}
- \langle\xi, u-u^\dagger\rangle \leq \beta_1 D_{R}^\xi(u,u^\dagger) +\beta_2 \sum_{i=1}^n \|T_iu - T_iu^\dagger\|_{Y_i}
\end{equation}
for all $u$ satisfying $R(u)  + \sum_{i=1}^n D_i(T_iu,f_i^\dagger) \leq R(u^\dagger)+ \epsilon$ for some $\epsilon > 0$.
Then, for any $j$, it holds that
\begin{equation}
D_{R}^\xi(u^n,u^\dagger) =  O (  \delta^n  )
 \end{equation}
and
\begin{align}
\|T_ju^n - f_j\|_{Y_j}^{2} = O \big( (\delta^n)^{2 \mu_j} \big).
\end{align}
This parameter choice is optimal for the estimates given in Equation \eqref{eq:general_estimates_norm_rates} in the sense that they cannot be improved by choosing $\lambda_i^n = (\delta_i^n)^{-(2-\epsilon_i)}$ with $\epsilon_i \in (0,2)$.
\begin{proof} 
First note that the first estimate in Equation \eqref{eq:general_estimates_norm_rates} of Theorem \ref{thm:convergence_vanishing_noise_norm} holds true for any admissible parameter choice. It is also easy to see that, for the particular case of this corollary, the rate for $D_{R}^\xi$ cannot be better than claimed. Also for the data discrepancy $\|T_j u^n -f_j^n\|_{Y_j}^2$ we see by considering the $j$th summand, that the rate cannot be better than $(\delta^n)^{2\mu_j}$, which is the claimed rate for the proposed choice. Plugging in the proposed parameter choice, the claimed rates follow by direct computation.
\end{proof}
\end{cor}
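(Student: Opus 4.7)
The plan is to invoke the general estimate derived in the proof of Theorem \ref{thm:norm_discrepancy_convergence_rates} (Equation \eqref{eq:general_estimates_norm_rates}) with the specific parameter choice of this corollary and to compute the resulting rates. I first verify admissibility for Theorem \ref{thm:convergence_vanishing_noise_norm}: since $\mu_i \geq 1$, the exponent $2 - 1/\mu_i$ lies in $[1,2)$, hence $\lambda_i^n \sim (\delta_i^n)^{-(2-1/\mu_i)} \to \infty$, while $\lambda_i^n (\delta_i^n)^2 \sim (\delta_i^n)^{1/\mu_i} \to 0$. Moreover, $I = \{1,\ldots,N\}$ yields $I^c = \emptyset$, so that $J_{\lambda^\dagger,I^c}$ collapses to $R$ and $J_{\lambda^\dagger,I^c}$-minimal solutions of $T u = f^\dagger$ are exactly $R$-minimal solutions.

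For the $R$-bound (which does not require the source condition), I would plug the parameter choice into the first inequality of Theorem \ref{thm:general_convergence_rates}: with $I^c = \emptyset$ and $p_i = 2$ it reduces to $R(u^n) \leq R(u^\dagger) + C \sum_i \lambda_i^n (\delta_i^n)^2$, and each summand equals $\delta^n$ by direct substitution. For the Bregman distance and data discrepancies, the key step is Equation \eqref{eq:general_estimates_norm_rates}, which holds in this setting because its derivation from Theorem \ref{thm:general_convergence_rates} via Young's inequality does not rely on the specific choice made in Theorem \ref{thm:norm_discrepancy_convergence_rates}. With the present choice, the three families of terms on the right-hand side evaluate to $(\lambda_i^n)^{-1} \sim (\delta^n)^{2\mu_i - 1}$, $\lambda_i^n (\delta_i^n)^2 \sim \delta^n$, and $\delta_i^n = (\delta^n)^{\mu_i}$, whose smallest exponent is $1$ because $\mu_i \geq 1$. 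Hence the entire right-hand side is $O(\delta^n)$, giving $D_R^\xi(u^n,u^\dagger) = O(\delta^n)$ and, for each $j$, $\lambda_j^n \|T_j u^n - f_j^n\|_{Y_j}^2 = O(\delta^n)$; dividing by $\lambda_j^n \sim (\delta^n)^{-(2\mu_j - 1)}$ then yields $\|T_j u^n - f_j^n\|_{Y_j}^2 = O\bigl((\delta^n)^{2\mu_j}\bigr)$.

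For the optimality claim, I would consider an arbitrary admissible choice $\lambda_i^n = (\delta_i^n)^{-(2-\epsilon_i)}$ with $\epsilon_i \in (0,2)$ and bound the right-hand side of \eqref{eq:general_estimates_norm_rates} from below. The three families of terms on the right-hand side become $(\delta^n)^{\mu_i(2-\epsilon_i)}$, $(\delta^n)^{\mu_i \epsilon_i}$, and $(\delta^n)^{\mu_i}$; in particular the $i_0$th summand contributes $(\delta^n)^{\mu_{i_0}} = \delta^n$, so the bound on $D_R^\xi$ derived from \eqref{eq:general_estimates_norm_rates} cannot be asymptotically smaller than $\delta^n$. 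For the $j$th data discrepancy, retaining only the $(\delta^n)^{\mu_j \epsilon_j}$ contribution from the $j$th summand gives a lower bound of order $(\delta^n)^{\mu_j \epsilon_j}$ on the right-hand side, so the resulting upper bound on $\|T_j u^n - f_j^n\|_{Y_j}^2$ is at least $c (\delta^n)^{\mu_j \epsilon_j}/\lambda_j^n = c (\delta^n)^{\mu_j \epsilon_j + \mu_j(2-\epsilon_j)} = c (\delta^n)^{2\mu_j}$, regardless of the choice of the $\epsilon_i$.

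The main obstacle is the bookkeeping in the optimality step: one has to identify which summands dominate the right-hand side of \eqref{eq:general_estimates_norm_rates} for arbitrary $(\epsilon_i)_i$ and confirm both that the $i_0$-contribution locks the $R$-rate at $\delta^n$ and that the $j$-contribution, once divided by $\lambda_j^n$, locks the $j$th data-rate at $(\delta^n)^{2\mu_j}$, independently of how the remaining $\epsilon_i$ are chosen. Everything else is a direct substitution into the estimates already proved.
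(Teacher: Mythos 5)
Your proposal is correct and follows essentially the same route as the paper's proof: both rest on the first inequality in Equation \eqref{eq:general_estimates_norm_rates}, which holds for any admissible parameter choice, with the claimed rates obtained by direct substitution and the optimality argument reduced to the $i_0$th summand (for $D_R^\xi$) and the $j$th summand divided by $\lambda_j^n$ (for the data term, where indeed $\lambda_j^n(\delta_j^n)^2/\lambda_j^n = (\delta^n)^{2\mu_j}$ independently of $\epsilon_j$). Your write-up merely makes explicit a few points the paper leaves implicit, such as verifying that the parameter choice is admissible for Theorem \ref{thm:convergence_vanishing_noise_norm} and that $J_{\lambda^\dagger,I^c}$ collapses to $R$ when $I^c=\emptyset$.
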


\subsection{Mixed two-norm and Kullback-Leibler-Divergence Discrepancies}
In inverse problems where the measurement process essentially corresponds to counting a number of events, such as photon measurements in PET or electrons in electron tomography, the noise is typically assumed to follow a Poisson distribution. Interpreting the regularization term as a prior in a Bayesian approach, the data fidelity for computing a maximum a posteriori (MAP) estimate for Poisson-noise-corrupted data is the Kullback-Leibler divergence (see for instance \cite{hohage2016inverse,sawatzky2013tv}).

Since important applications of joint regularization approaches, such as MR-PET reconstruction or multi-spectral electron tomography, include this type of measurements we also consider the application of the previous results to the Kullback-Leibler divergence as data fidelity. To this aim, we first establish some basic properties of this type of data fidelity.

\subsubsection{Basic properties of the Kullback-Leibler Divergence}
We define the Kullback-Leibler divergence as follows. Let $(\Sigma,\mathcal{B}_\mu,\mu)$ with $\Sigma \subset \R^m$ be a finite measure space. We consider functions in $L^1(\Sigma,\mu)$, where typically $\mu$ is for example either the $m$-dimensional Lebesgue $\mathcal{L}^ m$ measure and $\Sigma$ an open subset of $\R^m$ or $\mu = \mathcal{H}^{m-1} \mres \mathcal{S}^{m-1} \times \mathcal{L}^1$ and $\Sigma \subset \mathcal{S}^{m-1} \times \R$, with $\mathcal{S}^{m-1}$ being the $m-1$ dimensional unit sphere and $\mathcal{H}^{m-1} \mres \mathcal{S}^{m-1}$ the $m-1$ dimensional Hausdorff measure restricted to $\mathcal{S}^{m-1}$. The latter is the generic image space for the Radon transform.

Then we define $\dkl:L^1(\Sigma,\mu) \times L^1(\Sigma,\mu) \rightarrow [0,\infty]$ as
\begin{equation}\label{eq:d_kl_definition}
\dkl(v,f)= \begin{cases} 
\int_{\Sigma} v-f-f \log \Big(\frac{v}{f}\Big)\diff \mu, &\text{if } v \geq 0,\ f\geq 0 \text{ a.e.,} \\
\infty &\text{else.}
\end{cases}
\end{equation}
where we set $0 \log \Big(\frac{a} {0}\Big) = 0$ for $a \geq 0$ and $-b \log (\frac{0}{b}) = \infty$ for $b > 0$. 
Note that $\dkl$ is well-defined since, as an easy computation shows, the integrand is always non-negative, but potentially takes the value infinity also for $u \geq 0 $.

The following properties of $\dkl$ are well-known and can for instance be found in \cite{Borwein91, Resmerita07} for the case of Lebesgue measures and directly extended more general measures since they rely exclusively on pointwise assertions.
\begin{lem}(General properties of Kullback-Leibler)\label{lemma_KL_genal_properties}
For $v,f \in L^1(\Sigma,\mu)$, the Kullback-Leibler divergence satisfies the following.
\begin{enumerate}
\item The function $\dkl(\cdot,\cdot)$ is non-negative, and
\begin{equation*}
\dkl(v,f)=0 \Rightarrow v=f \text{ a.e. in }\Sigma.
\end{equation*}
\item The function $(v,f)\mapsto \dkl(v,f)$ is convex.
\item The Kullback-Leibler divergence can be estimated as follows
\begin{equation}
\|v-f\|_{L^1}^2 \leq  \Big(\frac{2}{3}\| f\|_{L^1} +\frac{4}{3}\|v\|_{L^1} \Big) \dkl(v,f).
\end{equation}
\item For $(v^n,f^n)\in L^1(\Sigma,\mu) \times L^1(\Sigma,\mu)$ uniformly bounded in $L^1(\Sigma,\mu)$, 
\begin{equation*}
\| v^n-f^n\|_{L^1}^2  \leq  C \dkl(v^n,f^n)
\end{equation*}
\end{enumerate}
\end{lem}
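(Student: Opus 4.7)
My plan is to handle the four claims in order, reducing everything to pointwise inequalities on the integrands and then integrating, since the measure space structure is otherwise irrelevant.

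For claim (1), I would work with the integrand $\phi(v,f) := v - f - f\log(v/f)$, extended by $0\log(v/0)=0$ and $-b\log(0/b)=+\infty$ for $b>0$. Using the elementary inequality $\log x \leq x - 1$ for $x>0$, which rearranges to $-f\log(v/f) \geq f - v$ when $v,f>0$, I get $\phi(v,f)\geq 0$ pointwise; the boundary cases $f=0$ or $v=0$ are immediate. Equality in $\log x \leq x-1$ holds iff $x=1$, so $\phi(v,f)=0$ pointwise iff $v=f$; integrating gives claim (1). For claim (2), I would show $\phi$ is convex on $\{(v,f): v,f>0\}$ by direct Hessian computation: the mixed second derivatives are $\partial_{vv}\phi = f/v^2$, $\partial_{vf}\phi = -1/v$, $\partial_{ff}\phi = 1/f$, and the determinant is zero while the diagonal entries are positive, so the Hessian is positive semidefinite. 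Convexity then extends to the closure of the domain by lower semicontinuity of the extended $\phi$, and convexity of $\dkl$ follows by integrating.

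For claim (3), which is the main technical step, I would look for a pointwise inequality of the form $(v-f)^2 \leq (c_1 v + c_2 f)\phi(v,f)$ with $c_1 = 4/3$, $c_2 = 2/3$, because Cauchy--Schwarz then yields
\begin{equation*}
\|v-f\|_{L^1} \leq \int \sqrt{(c_1 v+c_2 f)\phi(v,f)}\,\diff\mu \leq \Big(c_1\|v\|_{L^1}+c_2\|f\|_{L^1}\Big)^{1/2}\dkl(v,f)^{1/2},
\end{equation*}
which is exactly the claim after squaring. Setting $t=v/f$ for $f>0$ reduces the pointwise inequality to $3(t-1)^2 \leq (4t+2)(t-1-\log t)$ for $t>0$; the cases $f=0$ and $v=0$ are checked separately and are trivial. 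Let $h(t) := (4t+2)(t-1-\log t) - 3(t-1)^2$; the idea is to verify $h\geq 0$ by computing $h''(t) = 2(t-1)^2/t^2$, which is non-negative, so $h$ is convex. Combined with $h(1)=0$ and $h'(1)=0$ (so $1$ is a global minimizer), this gives $h\geq 0$ on $(0,\infty)$.

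Claim (4) is then an immediate corollary: by uniform $L^1$ boundedness of $(v^n)$ and $(f^n)$, the factor $\tfrac{2}{3}\|f^n\|_{L^1}+\tfrac{4}{3}\|v^n\|_{L^1}$ in (3) is uniformly bounded by some constant $C$, and we inherit the desired estimate. The only step I consider non-routine is producing the exact pointwise inequality in (3) with the sharp constants $4/3$ and $2/3$; everything else is either elementary convex analysis or a direct consequence. The trick is the Cauchy--Schwarz reduction, which turns a non-obvious global inequality into a one-variable convexity check through the substitution $t=v/f$.
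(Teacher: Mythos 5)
Your proof is correct, but it is worth noting that the paper does not actually prove Lemma~3.3 at all: it declares these properties well-known, cites the literature (Borwein--Lewis and Resmerita et al.) for the Lebesgue-measure case, and remarks only that the arguments are purely pointwise and hence transfer to general finite measure spaces. Your write-up is in effect a self-contained reconstruction of exactly the argument behind those citations: the refined Pinsker-type bound in item~(3) is classically proved via the pointwise inequality $3(t-1)^2 \le (4t+2)(t-1-\log t)$ for $t>0$ followed by Cauchy--Schwarz, which is precisely your route (and your calculus is right: $h'(t)=2t-4\log t-2/t$, $h''(t)=2(t-1)^2/t^2\ge 0$, $h(1)=h'(1)=0$), while your Hessian computation for item~(2) is the standard one. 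What your version buys is that it visibly substantiates the paper's claim that everything reduces to pointwise assertions, so the extension from Lebesgue measure to arbitrary finite measures is immediate. Two small points to tighten. First, in item~(2), ``extends to the closure by lower semicontinuity'' should be spelled out: one must check that the extended integrand on $[0,\infty)^2$ coincides with the lsc hull of its restriction to the open quadrant (this holds, since $f\log f \to 0$ as $f\to 0$ and $-f\log v \to \infty$ as $v\to 0$ for fixed $f>0$), and then invoke the fact that the lsc hull of a convex function is convex; alternatively, observe that $(v,f)\mapsto f\,\varphi(v/f)$ with $\varphi(t)=t-1-\log t$ is the perspective of a convex function, which gives joint convexity including the boundary without any Hessian or closure argument. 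Second, in item~(3) you should dispose at the outset of the trivial case $\dkl(v,f)=\infty$ --- in particular whenever $v$ or $f$ is negative on a set of positive measure --- so that the Cauchy--Schwarz step only ever involves nonnegative $v,f$ and an a.e.\ finite integrand.
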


Further, we will require suitable continuity results for the Kullback-Leibler divergence as stated in the following proposition.

\begin{prop}(Continuity properties)\label{prop_DKL_continuity}
\begin{enumerate}\item For sequences $(v^n)_n,(f^n)_n \in L^1(\Sigma,\mu)$ such that $(f^n)_n$ is bounded in $L^1(\Sigma,\mu)$, the Kullback-Leibler divergence is jointly  lower semi-continuous in the sense that
\begin{equation}
\big ( v^n \overset{L^1}{\rightharpoonup} v \text{ and } \dkl(f,f^n) \to 0 \big) \Rightarrow \dkl(v,f)\leq \liminf_{n \to \infty} \dkl(v^n,f^n).
\end{equation}
\item  Let  $f,v \in  L^1(\Sigma,\mu)$ and the sequence $(f^n)_n$ be given such that $(f^n)_n$ is bounded in $L^1(\Sigma,\mu)$ with $\dkl(f,f^n) \to 0$ as $n \rightarrow \infty$. Assume that there exists constants $\beta_0, \, \beta_1>0$  such that
\[f \beta_0 \geq v \geq f \beta_1 \text{ a.e. on } \Sigma.
\]
Then \begin{equation}
  \dkl(v,f^n) \to \dkl(v,f) \text{ as $n \to \infty$}.
 \end{equation}

 \item In the setting of Assertion 2, defining $\Sigma_{v+} = \{x \in \Sigma \,|\, v(x)>0\}$. Then there exists $C> 0$ such that
 \begin{equation}
 |\dkl(v,f)-\dkl(v,f^n)| \leq C \| \log\Big(\frac{v}{f}\Big)\|_{L^\infty(\Sigma_{v+},\mu)} \dkl(f,f^n)^{\frac{1}{2}}+\dkl(f,f^n). \end{equation}
\end{enumerate}
\end{prop}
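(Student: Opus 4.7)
My plan is to prove the three assertions in the order (3), (2), (1): Assertion 3 provides an explicit modulus of continuity that immediately yields Assertion 2, while Assertion 1 requires a dual representation of $\dkl$. The main technical care throughout will be the bookkeeping on the sets $\{v=0\}$ and $\{f=0\}$ where the definition of the integrand switches between the two conventions listed after \eqref{eq:d_kl_definition}.

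For Assertions 3 and 2 the key step will be the pointwise identity
\begin{equation*}
\dkl(v,f^n)-\dkl(v,f)=\dkl(f,f^n)-\int_{\Sigma}(f^n-f)\log\Big(\frac{v}{f}\Big)\diff\mu,
\end{equation*}
which follows by expanding all three terms and cancelling the $\log v$ and $\log f^n$ contributions. The hypothesis $\beta_1 f\le v\le \beta_0 f$ gives $\{v>0\}=\{f>0\}$ up to a $\mu$-null set, so $\log(v/f)$ is essentially bounded on $\Sigma_{v+}$ by $\max(|\log\beta_0|,|\log\beta_1|)$; moreover finiteness of $\dkl(f,f^n)$ automatically forces $f^n=0$ $\mu$-a.e.\ on $\{f=0\}$, so every integral in the identity may be restricted to $\Sigma_{v+}$ without loss. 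Bounding the remaining integral then yields
\begin{equation*}
|\dkl(v,f)-\dkl(v,f^n)|\le \dkl(f,f^n)+\|\log(v/f)\|_{L^{\infty}(\Sigma_{v+},\mu)}\,\|f^n-f\|_{L^1},
\end{equation*}
and applying the Pinsker-type estimate from Lemma~\ref{lemma_KL_genal_properties}(4) to the uniformly $L^1$-bounded pair $(f,f^n)$ produces $\|f^n-f\|_{L^1}\le C\,\dkl(f,f^n)^{1/2}$, finishing Assertion~3. Since $\|\log(v/f)\|_{L^{\infty}(\Sigma_{v+},\mu)}$ is finite, Assertion~2 follows at once by letting $n\to\infty$.

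For Assertion 1 I will exploit the Legendre--Fenchel representation of the integrand $H(a,b)=a-b-b\log(a/b)$, namely
\begin{equation*}
H(a,b)=\sup_{\alpha>0}\Big[(1-\alpha^{-1})a-(\log\alpha)\,b\Big],
\end{equation*}
verified by a direct one-dimensional maximization at $\alpha=a/b$ when $b>0$, with limits recovering the boundary conventions at $\{a=0\}$ and $\{b=0\}$. Consequently, for every simple measurable $\alpha:\Sigma\to[c_1,c_2]$ with $0<c_1\le c_2<\infty$ and non-negative $w,g\in L^1(\Sigma,\mu)$,
\begin{equation*}
\dkl(w,g)\ge \int_{\Sigma}\Big[(1-\alpha^{-1})\,w-(\log\alpha)\,g\Big]\diff\mu,
\end{equation*}
and taking the supremum over such $\alpha$ (via a standard approximation of $v/f$ by simple functions on $\{f>0\}$ and letting $c_1\to 0$, $c_2\to\infty$) recovers $\dkl$. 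For the given sequences, Lemma~\ref{lemma_KL_genal_properties}(4) again converts $\dkl(f,f^n)\to 0$ into strong $L^1$ convergence $f^n\to f$. Since $1-\alpha^{-1}$ and $\log\alpha$ are bounded, the linear functional $(w,g)\mapsto\int[(1-\alpha^{-1})w-(\log\alpha)g]\,\diff\mu$ is continuous under weak-in-$w$, strong-in-$g$ convergence, which gives
\begin{equation*}
\liminf_{n\to\infty}\dkl(v^n,f^n)\ge \int_{\Sigma}\big[(1-\alpha^{-1})v-(\log\alpha)f\big]\diff\mu,
\end{equation*}
and passing to the supremum over $\alpha$ yields Assertion~1. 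The subtlest point, which I expect to be the main obstacle, is to ensure that the supremum over bounded simple $\alpha$ indeed recovers $\dkl$ without loss—precisely the place where the two boundary conventions of the integrand come into play and require the two-sided approximation sketched above.
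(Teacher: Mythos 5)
Your proposal is correct. For Assertions 2 and 3 it is essentially the paper's own argument: you expand to the same identity
\begin{equation*}
\dkl(v,f^n)-\dkl(v,f)-\dkl(f,f^n)=\int_{\Sigma_{v+}}\big(f-f^n\big)\log\Big(\frac{v}{f}\Big)\diff\mu,
\end{equation*}
handle the degenerate sets exactly as the paper does (finiteness of $\dkl(f,f^n)$ forces $f^n=0$ a.e.\ on $\{f=0\}$, and $\beta_1 f\le v\le \beta_0 f$ gives $\{v>0\}=\{f>0\}$ up to a null set), bound the integral by $\|\log(v/f)\|_{L^\infty(\Sigma_{v+},\mu)}\|f^n-f\|_{L^1}$, and convert $\|f^n-f\|_{L^1}$ to $C\,\dkl(f,f^n)^{1/2}$ via Lemma~\ref{lemma_KL_genal_properties}; proving 3 first and deducing 2 merely reverses the paper's 2-then-3 order.

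For Assertion 1 you take a genuinely different route. The paper proves lower semi-continuity with respect to strong $L^1$ convergence in both arguments by checking pointwise lower semi-continuity of the integrand $g(a,b)=a-b-b\log(a/b)$ on $[0,\infty)^2$ (case analysis at the boundary) and applying Fatou's lemma along an a.e.\ convergent subsequence, and then upgrades to weak convergence in the first argument by invoking joint convexity of $\dkl$ (item 2 of Lemma~\ref{lemma_KL_genal_properties}), i.e., the standard fact that convex strongly-lsc functionals are weakly lsc. You instead use the Legendre--Fenchel representation $H(a,b)=\sup_{\alpha>0}\big[(1-\alpha^{-1})a-(\log\alpha)b\big]$, which is correct (the maximizer is $\alpha=a/b$, and the limits $\alpha\to 0$ and $\alpha\to\infty$ reproduce the conventions at $a=0$ and $b=0$), writing $\dkl$ as a supremum of affine functionals that are continuous under weak-in-the-first, strong-in-the-second convergence; lower semi-continuity then follows with no appeal to convexity or Mazur-type arguments. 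The recovery step you rightly flag as delicate does go through: truncating $v/f$ to $[1/k,k]$ yields a pointwise nondecreasing sequence of integrands starting from the value $0$ at $\alpha\equiv 1$, so monotone convergence applies even when $\dkl(v,f)=\infty$. The trade-off is clear: the paper's route is shorter because convexity is already on record, while yours is more self-contained and makes the weak--strong continuity structure of the statement explicit. One point you should make explicit when writing this up: along a subsequence realizing a finite $\liminf$, the $v^n$ are nonnegative a.e., and weak $L^1$ limits of nonnegative functions are nonnegative, so $v\ge 0$ a.e.\ and the dual representation indeed evaluates to $\dkl(v,f)$.
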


\begin{proof}
\noindent \textbf{Ad 1.} 
We show that $\dkl$ is lower semi-continuous with respect to $L^1$ convergence in both components. The desired statement then follows since convergence in $\dkl(f,f^n)$ and uniform boundedness of $(f^n)_n$ in $L^1$ implies convergence in $L^1$ due to Lemma \ref{lemma_KL_genal_properties} and convexity of the function yields lower semi-continuity in weak $L^1$ convergence.

 Without loss of generality we assume that $(v^n)_n$ and $(f^n)_n$ are such that $v^n\geq 0$, $f^n \geq 0$ pointwise almost everywhere and $\lim_{n\to \infty} \dkl(v^n,f^n)=\liminf_{n\to \infty} \dkl(v^n,f^n)$. The sequences $(v^n)_n$, $(f^n)_n$ are bounded in $L^1$, and thus contain subsequences $(v^{n_i})_i$, $(f^{n_i})_i$ converging pointwise to $v$ and $f$, respectively.
We show that the non-negative integrand $g(a,b)=a-b-b\log \big(\frac{a}{b}\big)$ of $\dkl$, is lower semi-continuous on $[0,\infty)\times[0,\infty)$. Indeed, for $a^n\to a$, $b^n\to b$, the case $a>0$, $b>0$ is trivial since $g$ is continuous on $(0,\infty)\times(0,\infty)$. If $a=0$ and $b=0$, then $g(a,b)=0\leq g(a^n,b^n)$, if $a=0$, $b> 0$, then $-b^n \log \big(\frac{a^n}{b^n}\big) \to \infty$ implying $\lim g(a^n,b^n)=g(a,b)=\infty$ . In the case $a>0$, $b=0$, the value $b^n \log \big(\frac{a^n}{b^n}\big) \to 0$ resulting in $g(a,b)=\lim g(a^n,b^n)$. Hence, $g\big(v(x),f(x)\big)\leq \liminf_{i \to \infty} g\big(v^{n_i}(x),f^{n_i}(x)\big)$ a.e. and application of Fatou's Lemma yields
\begin{equation*}
\dkl(v,f) \leq \liminf_{i \to \infty} \dkl(v^{n_i},f^{n_i}) = \liminf_{n \to \infty} \dkl(v^{n},f^{n}).
\end{equation*}

\noindent \textbf{Ad 2.} 
We assume $\dkl(f,f^n) < \infty $ and $\dkl(v,f)< \infty$, since the case $\dkl(v,f)=\infty$ is trivial due to lower semi-continuity. Define $\Sigma_{v+} = \{ x \in \Sigma \,|\, v(x) >0 \}$ and note that $f(x) > 0 $ for $x \in \Sigma_{v+}$ by assumption. Using the conventions for $a\log(\frac{a}{b})$ for the case that $a=0$ or $b=0$, direct computation shows
\begin{align}
&\Big|\dkl(v,f^n)-\dkl(v,f)-\dkl(f,f^n)\Big|\notag
\\
&=\Big|\int_{\Sigma} -f^n\log\Big(\frac{v}{f^n}\Big)+f\log\Big(\frac{v}{f}\Big)+f^n\log\Big(\frac{f}{f^n}\Big) \diff \mu\Big|=\Big|\int_{\Sigma_{v+}}\big(\log(v)-\log(f)\big) \big( f-f^n\big) \diff  \mu \Big|\label{equ_KL_proof_conv_estimate}
\\
&
\leq\|f^n-f\|_{L^1(\Sigma,\mu)} \|\log\Big(\frac{v}{f}\Big)\|_{L^\infty(\Sigma_{v+},\mu)} .\notag
\end{align}

Indeed, note that the above computations are also valid in the special cases that one of the occurring functions attains the value $0$: Since both $\dkl(f,f^n) < \infty $ and $\dkl(v,f)< \infty$ are finite we get that, up to sets of measure zero, $f(x) = 0 $ implies $f^n(x)=0$ as well as $v(x)=0$ implies $f(x)=0$. This means that all log terms in the second line above are finite and a simple case study shows that the estimate above also captures all degenerate cases. The claimed convergence then follows immediately.

\noindent \textbf{Ad 3.} This follows directly from \eqref{equ_KL_proof_conv_estimate} and Lemma \ref{lemma_KL_genal_properties}.
\end{proof}\medskip

Our goal is now to identify when the general Assumptions \ref{ass:main_assumptions} as well Assumptions \ref{ass:stability_1}, \ref{ass:stability_2} can be verified for data fidelities that include $\dkl$. To this aim, we summarize the corresponding assertions in the following proposition, which is an immediate consequence of Lemma \ref{lemma_KL_genal_properties} and Proposition \ref{prop_DKL_continuity}.
\begin{prop}[Properties of $\dkl$] \label{prop:summary_dkl_properties} Define by $\tau_{\dkl}$ to be the $L^1$ topology on $L^1(\Sigma,\mu)$ such that $f^n \overset{\dkl}{\rightarrow} f$ if and only if $\dkl(f,f^n) + \|f^n -f \|_1 \rightarrow 0$.
Then the following holds for each $f,v \in L^1(\Sigma,\mu)$ and each sequence $(f^n)_n $ in $L^1(\Sigma,\mu)$.
\begin{enumerate}
\item $f^n \overset{\dkl}{\rightarrow} f$ if and only if $\dkl(f,f^n) \rightarrow 0$ and $\|f^n\|_1$ is uniformly bounded.
\item $\dkl(u,v) \geq 0$, $\dkl(u,v) = 0 \Rightarrow u=v$.
\item $\dkl$ is lower semi-continuous w.r.t weak-$L^1$ convergence in the first and $ \dkl$ convergence in the second component.
\item If $v,f^\dagger \in L^1(\Sigma,\mu)$ are such that there exists constants $\beta_0, \beta_1$ with $\beta_0 f^\dagger \geq v \geq \beta_1 f^\dagger$, then the mapping
\[ f \mapsto \dkl(v,f) \quad \text{is continuous at }f^\dagger\text{ w.r.t. } \dkl-\text{ convergence.} \]
\item If $U \subset L^1 (\Sigma,\mu)$ and $f^\dagger \in L^1(\Sigma,\mu)$ are such that there exit constants $\beta_0, \beta_1$ with $\beta_0 f^\dagger \geq v \geq \beta_1 f^\dagger$ for all $v \in U$, then there exists $C>0$ such that, for each $v \in U$ and $f$ such that $\dkl(f^\dagger,f) + \|f^\dagger - f\|_1 \leq 1$ we have
\begin{equation}
 | \dkl(v,f) - \dkl(v,f^\dagger)| \leq C \dkl(f^\dagger,f)^{1/2}\
 \end{equation}
\end{enumerate}
\end{prop}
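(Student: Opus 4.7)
The plan is to verify each of the five assertions by tracing it back to the appropriate part of Lemma \ref{lemma_KL_genal_properties} or Proposition \ref{prop_DKL_continuity}. Since the proposition is declared as an immediate consequence of these, no new analysis is required; the proof is essentially a bookkeeping exercise that unpacks the composite definition of $\tau_{\dkl}$.

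For assertion~1, I would argue both directions directly from the definition that $f^n \overset{\dkl}{\to} f$ means $\dkl(f,f^n) + \|f^n - f\|_1 \to 0$. The forward direction is immediate, since $L^1$-convergence implies uniform boundedness of $(\|f^n\|_1)_n$. For the converse, assuming $\dkl(f,f^n)\to 0$ together with a uniform $L^1$-bound, Lemma \ref{lemma_KL_genal_properties}(4) controls $\|f^n-f\|_1^2$ by $C\dkl(f,f^n)$, which yields strong $L^1$-convergence and hence $\tau_{\dkl}$-convergence. Assertion~2 is exactly Lemma \ref{lemma_KL_genal_properties}(1). Assertion~3 is Proposition \ref{prop_DKL_continuity}(1), whose hypotheses ($L^1$-boundedness of $(f^n)_n$ together with $\dkl(f,f^n)\to 0$) are precisely those unpacked in assertion~1. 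Assertion~4 is Proposition \ref{prop_DKL_continuity}(2) applied verbatim, once again with the required hypotheses supplied by the characterization of $\tau_{\dkl}$-convergence.

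The only small computation to be done is for assertion~5, which I would derive from Proposition \ref{prop_DKL_continuity}(3):
\[
|\dkl(v,f)-\dkl(v,f^\dagger)| \leq C \|\log(v/f^\dagger)\|_{L^\infty(\Sigma_{v+},\mu)} \dkl(f^\dagger,f)^{1/2} + \dkl(f^\dagger,f).
\]
The uniform sandwich $\beta_1 f^\dagger \leq v \leq \beta_0 f^\dagger$ on $\Sigma_{v+}$ bounds the logarithm uniformly over $v\in U$ by $\max(|\log\beta_0|,|\log\beta_1|)$, independently of $v$. Under the normalization $\dkl(f^\dagger,f) \leq 1$ one has $\dkl(f^\dagger,f) \leq \dkl(f^\dagger,f)^{1/2}$, so the linear remainder is absorbed into the square-root term and the claimed inequality holds with a uniform constant $C$.

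There is no genuine obstacle here; the proposition is a translation layer that packages the previously established Lemma \ref{lemma_KL_genal_properties} and Proposition \ref{prop_DKL_continuity} into the precise form required by the abstract framework of Section~\ref{sec:general_results}. The only place where care is needed is in reconciling the composite topology $\tau_{\dkl}$ (which couples $\dkl$-smallness with an $L^1$-bound) with the slightly weaker hypotheses appearing in the underlying lemmas.
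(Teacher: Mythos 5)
Your proposal is correct and follows exactly the route the paper intends: the paper offers no separate proof, declaring the proposition ``an immediate consequence'' of Lemma \ref{lemma_KL_genal_properties} and Proposition \ref{prop_DKL_continuity}, and your bookkeeping---including deriving assertion~1 via the Pinsker-type estimate of Lemma \ref{lemma_KL_genal_properties}, feeding the $L^1$-boundedness built into $\tau_{\dkl}$-convergence to the hypotheses of Proposition \ref{prop_DKL_continuity}, and for assertion~5 bounding $\|\log(v/f^\dagger)\|_{L^\infty(\Sigma_{v+},\mu)}$ by $\max(|\log\beta_0|,|\log\beta_1|)$ and absorbing the linear term using $\dkl(f^\dagger,f)\leq 1$---is precisely the intended unpacking. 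The only detail worth stating explicitly is that the constraint $\|f^\dagger-f\|_1\leq 1$ also makes the constant in the estimate $\|f^\dagger-f\|_1\leq C\,\dkl(f^\dagger,f)^{1/2}$ uniform in $f$ (via $\|f\|_1\leq\|f^\dagger\|_1+1$), which your normalization remark implicitly covers.
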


\subsubsection{Stability and convergence for mixed two-norm and Kullback-Leibler discrepancies} \label{sec:l2_kl_general}
In the previous section we have derived properties of the Kullback-Leibler divergence that allow to incorporate it in the general framework of Section \ref{sec:general_results}. In this section, motivated by concrete applications, we will work out how these results can be employed to the particular case of mixed $L^2$ and Kullback-Leibler data discrepancy terms. That is, we decompose the index set $\{1,\ldots,N\}$ as the disjoint union of $L_\nr$ and $L_\kl$ and consider the minimization problem
\begin{equation} \tag*{$P_{\text{NKL}}(\lambda,f)$} \label{eq:min_prop_l2_kl}
\min _{u  \in L^p(\Omega)^N} R(u) + \sum_{i\in L_\nr} \lambda_i \|T_i u_i - f_i\|_2 ^2 + \sum_{i\in L_\kl} \lambda _i \dkl(T_i u_i + c_i,f_i),
\end{equation}
where $p \in (1,2]$ and $\Omega \subset \R^d$ is a bounded Lipschitz domain, $R:L^p(\Omega)^N \rightarrow [0,\infty]$ is a regularization term and $\lambda \in (0,\infty)^N$. Note that we have further specified the problem by evaluating the data discrepancies only on the components $u_i$ of $u = (u_1,\ldots,u_N)$, which is a particular case of the previous setting and corresponds to the situation that all components of $u$ are obtained from independent measurements. Also, we now assume the operators $T_i$ to be bounded linear operators, that is, for $i \in L_\nr$ we assume $T_i\in  \mathcal{L}(L^p(\Omega) , L^p(\Sigma_i))$ with $\Sigma_i \subset \R^{m_i}$ bounded and Lebesgue measurable and for $i \in L_\kl$ we assume $T_i \in \mathcal{L}(L^p(\Omega) , L^1(\Sigma_i,\mu_i))$ with $\Sigma_i \subset \R^{m_i}$ and $(\Sigma_i,\mathcal{B}_{\mu_i},\mu_i)$ a finite measure space. The $(f_i)$ denote the given data in the respective spaces and, for indices $i \in L_\kl$, we have included additional additive terms $c_i$ that we assume to be given and such that $c_i \in L^ 1(\Sigma_i)$, $c_i \geq 0$ pointwise. The reason for including the $c_i$ is that in PET imaging they are typically used to model some (a-priory estimated) measurements resulting from scattering and random events (see for instance \cite{holler16mri_pet}).

The data discrepancies $\dkl$ are the Kullback-Leibler divergence as defined in Equation \eqref{eq:d_kl_definition}. For the norm discrepancies we extend the 2-norm by duality for $v \in L^p(\Sigma_i)$ as
\[ \|v\|_2 = \sup_{\substack{ \phi \in C_c^\infty(\Sigma_i) \\ \|\phi\|_2 \leq 1 }} \int_{\Sigma_i} v\phi. \]
Note that, as can be easily seen by duality and density arguments that $\|u\|_2 < \infty$ if and only if $u \in L^2(\Sigma_i)$ and $\|\cdot \|_2$ is lower semi-continuous w.r.t weak $L^1$ convergence as being the pointwise supremum of a set of continuous functions \cite{Ekeland}. The reason for using $L^p(\Sigma_i)$ as image space and extending the $2$-norm instead of defining $T_i$ to be continuously mapping in $L^2$ is that, in typical applications we need to choose $p \leq \frac{d}{d-1}$ with $d \in \{2,3,4\}$ to obtain continuous embeddings and hence allowing $T_i$ to map continuously to $L^p$ instead of $L^2$ is less restrictive. On the other hand, the $2$-norm is the natural choice for measurements corrupted by Gaussian noise.

For the particular setup of Equation \ref{eq:min_prop_l2_kl}, choosing the involved topologies appropriately, the general Assumption \ref{ass:main_assumptions} can be simplified as follows.
\begin{ass}{(NKL)}[$\tau_X$ and $\tau_{Y_i}$ the weak topologies in the respective spaces, $\tau_{D_i}$ the trivial topolgy for $i \in L_\nr$ and $\tau_{D_i} $ the $L^1$-topology for $i \in L_\kl$] \label{ass:l2_kl_general}  \textcolor{white}{ds}
\begin{enumerate} 
\item $R$ is lower semi-continuous w.r.t weak convergence in $L^p(\Omega)^N$
\item $T_i\in  \mathcal{L}(L^p(\Omega) , L^p(\Sigma_i))$ for $i \in L_\nr$ and $T_i \in \mathcal{L}(L^p(\Omega) , L^1(\Sigma_i,\mu_i))$ for $i \in L_\kl$.
\item For each $C >0$ the set 
\[\{ u \in L^p(\Omega)^N \,|\, R(u) + \sum_{i\in L_\nr} \|T_i u_i\|_{p}  + \sum_{i \in L_{\kl}} \|T_i u_i \|_1 < C\} \]
is sequentially compact w.r.t. weak convergence in $L^p(\Omega) ^N$.
\end{enumerate}
\end{ass}
\begin{prop} If assumption \ref{ass:l2_kl_general} holds for the particular choices of regularization and data fidelity used Equation \ref{eq:min_prop_l2_kl}, and if we choose $\tau_X$ the topology corresponding to weak $L^p$ convergence, $\tau_{D_i}$ the trivial topology for $i \in L_\nr$ and $\tau_{D_i}$  the topology induced by convergence in $\|\cdot \|_1$ for $i \in L_\kl$ and $\tau_{Y_i}$ the topology corresponding to weak $L^p$ and weak $L^1$ convergence for $i \in L_\nr$ and $i\in L_\kl$, respectively, then Assumption \ref{ass:main_assumptions} holds.
\begin{proof}
First note that, when considering Assumption \ref{ass:main_assumptions} we include the additional terms $c_i$ appearing in the Kullback-Leibler discrepancies in the forward operators $T_i$, that is, for $i \in L_\kl$ we consider the affine operators $\tilde{T}_i u = T_i u_i + c_i$. 
Given the properties of $\dkl$ and the 2-norm, the only thing that is then left to show is that the sequential compactness as in Point $3.$ above implies sequential compactness as in Assumption \ref{ass:main_assumptions}, Point 5.
To this aim take $\lambda \in (0,\infty) ^N$ and sequences $(u^n)_n$, $(f^n)_n$, and assume that $f^n \overset{D}{\rightarrow} f$ and $J_\lambda(u^n,f^n)<c$. Obviously this implies that $R(u^n)$ is bounded. For $i \in L_\nr$, also, $\|T_i u^n_i - f^n_i\|_2$ is bounded and we can estimate
\[ \|T_i u_i^n\|_p \leq \|T_i u_i^n- f_i^n\|_p + \|f_i^n - f\|_p + \|f\|_p \leq \|T_i u_i^n- f_i^n\|_2 + \|f_i^n - f\|_2 + \|f\|_p < C < \infty \]
and consequently $\|T_i u_i\|_{p}$ is bounded.
Hence, if we can show that $\|T_iu_i^n\|_1$ is bounded for $i \in L_{\kl}$ the assertion follows. Assume that (up to subsequences) $\|T_i u_i^n\|_1 \rightarrow \infty$ which implies that $\|T_i u_i^n + c_i\|_1 \rightarrow \infty$. Using Lemma \ref{lemma_KL_genal_properties} we get for large enough $n$
\begin{multline*}
 \|T_iu_i^n + c_i\|_1^2 - 2 \|T_iu^n_i + c_i\|_1 \|f^n_i\|_1  + \|f^n_i\|^2_1  \\
  \leq \|T_iu^n_i+ c_i-f_i^n \|_1 ^2  
 \leq \left( \frac{2}{3}\|f^n_i\|_1 + \frac{4}{3}\|T_iu^n_i+ c_i\|_1  \right) \dkl(T_iu^n_i+ c_i,f^n_i) 
 \end{multline*}
Now estimating all bounded quantities with constants, this implies that 
\[ \|T_iu^n_i  + c_i\|_1 ( \|T_iu^n_i+ c_i\|_1  - C_1  )  \leq C_2 \]
for $C_1,C_2 >0$ which contradicts to $\|T_iu^n_i+ c_i\|_1  \rightarrow \infty$. Hence the assertion follows.
\end{proof}
\end{prop}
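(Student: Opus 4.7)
The plan is to verify the five points of Assumption~\ref{ass:main_assumptions} using the specific structure of the $L^2$--Kullback-Leibler setting together with the results established in Proposition~\ref{prop:summary_dkl_properties} and Lemma~\ref{lemma_KL_genal_properties}. A convenient preliminary step is to absorb the additive terms $c_i$ into affine forward operators $\tilde{T}_i u := T_i u_i + c_i$ for $i\in L_{\kl}$, so that the Kullback-Leibler contributions fit the template $D_i(\tilde T_i u,f_i)$ required by the abstract framework.

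With this reformulation, the first four points of Assumption~\ref{ass:main_assumptions} reduce to routine verification. Point~1 is clear from definiteness of $\|\cdot\|_2$ and of $\dkl$ (Proposition~\ref{prop:summary_dkl_properties}). For Point~2, lower semi-continuity of the composite data term holds for $i\in L_{\nr}$ because the extended $\|\cdot\|_2$ is defined as a supremum of weakly continuous linear functionals, hence is weakly $L^1$ lower semi-continuous, and for $i\in L_{\kl}$ by the joint lower semi-continuity in Proposition~\ref{prop:summary_dkl_properties}, noting that $\tau_{D_i}$-convergence on the Kullback-Leibler side (namely $L^1$ convergence) supplies the uniform $L^1$ boundedness hypothesis of that proposition. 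Point~3 is exactly Point~1 of Assumption~\ref{ass:l2_kl_general}, and Point~4 is immediate from linearity and boundedness of $T_i$, with $\domain(T_i)=L^p(\Omega)$ trivially weakly closed.

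The substantive step is uniform coercivity (Point~5). Given $\lambda\in(0,\infty)^N$ and sequences $(u^n)_n$, $(f^n)_n$ with $f^n\overset{D}{\to}f$ and $J_\lambda(u^n,f^n)<c$, the goal is to control the quantity appearing in Point~3 of Assumption~\ref{ass:l2_kl_general}, namely
\[
R(u^n) + \sum_{i\in L_{\nr}}\|T_iu_i^n\|_p + \sum_{i\in L_{\kl}} \|T_iu_i^n\|_1,
\]
so that the prescribed sequential compactness yields a weakly convergent subsequence. Boundedness of $R(u^n)$ is immediate. For $i\in L_{\nr}$, a triangle inequality combined with boundedness of $\|T_i u_i^n - f_i^n\|_2$ and of $\|f_i^n\|_p$ (from the $L^2$ convergence) yields $\|T_i u_i^n\|_p < C$. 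For $i\in L_{\kl}$, the idea is to invoke Lemma~\ref{lemma_KL_genal_properties}(3) in the form
\[
\|\tilde T_i u_i^n - f_i^n\|_1^2 \le \bigl(\tfrac{2}{3}\|f_i^n\|_1 + \tfrac{4}{3}\|\tilde T_iu_i^n\|_1\bigr)\,\dkl(\tilde T_iu_i^n,f_i^n).
\]
Since $\|f_i^n\|_1$ and $\dkl(\tilde T_iu_i^n,f_i^n)$ are bounded (the first by Proposition~\ref{prop:summary_dkl_properties}, the second by the bound on $J_\lambda(u^n,f^n)$), expanding the square on the left produces, after absorbing bounded quantities into constants, a quadratic inequality of the form $a(a-C_1)\le C_2$ in $a=\|\tilde T_i u_i^n\|_1$, which forces $a$ to be bounded; subtracting the fixed $c_i\in L^1$ then recovers the required bound on $\|T_iu_i^n\|_1$.

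The main obstacle is precisely this Kullback-Leibler coercivity step: $\dkl$ does not satisfy any norm-like inverse triangle inequality, so one cannot bound $\|T_i u_i^n\|_1$ linearly from $\|\tilde T_i u_i^n - f_i^n\|_1$ and $\|f_i^n\|_1$. The remedy is to exploit the \emph{quadratic} bound of Lemma~\ref{lemma_KL_genal_properties}(3) and argue via a resulting quadratic inequality, which crucially requires that $\|f_i^n\|_1$ be controlled --- and this control is exactly what is built into the definition of $\tau_{D_i}$-convergence for $i\in L_{\kl}$ in Proposition~\ref{prop:summary_dkl_properties}(1). All other points are standard reductions from the specialized setting back to the abstract Assumption~\ref{ass:main_assumptions}.
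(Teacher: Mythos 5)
Your proposal is correct and follows essentially the same route as the paper: absorbing the $c_i$ into affine operators $\tilde{T}_i$, reducing the verification to the coercivity Point~5, bounding the norm channels via the triangle inequality, and controlling the Kullback-Leibler channels through the quadratic estimate of Lemma~\ref{lemma_KL_genal_properties}. The only cosmetic difference is that you extract boundedness of $\|\tilde{T}_i u_i^n\|_1$ directly from the quadratic inequality, whereas the paper phrases the same step as a contradiction argument.
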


Now the continuity assumption $S_1(u,f^\dagger,I^c)$ that was required for stability translates to the following assumption on the data.
\begin{ass}{$S^{\textnormal{kl}}_1(u,f^\dagger,I^c \cap L_{\textnormal{kl}})$} \label{ass:stability_nkl_1}
\leqnomode
\begin{equation*} 
\left\{
\begin{aligned}
& \text{For }i \in I^ c \cap L_\kl \text{, there exist } \beta_0,\beta_1 > 0 \text{ s.t. } f^\dagger_i \beta_0 \geq T_iu_i +c_i\geq f^\dagger_i \beta_1.
\end{aligned}\right.
\end{equation*}
\reqnomode
\end{ass}
Noting that Assumption \ref{ass:stability_1} was only used for $u$ such that $D_i(T_iu,f_i^\dagger) < \infty$, the  following lemma allows to relate the Assumption \ref{ass:stability_1} and \ref{ass:stability_nkl_1}.
\begin{lem} Assume that \ref{ass:l2_kl_general} holds and that $u$ is such that $\|T_i u_i - f_i^\dagger\|_2 < \infty$ for $i \in I^c \cap L_\nr$. Then Assumption \ref{ass:stability_nkl_1} implies Assumption \ref{ass:stability_1}
\proof
For indices $i \in I^c \cap L_\nr$ we note that $\|T_i u_i - f_i^\dagger\|_2 < \infty$ implies for all $f_i$ with $\|f_i - f_i^\dagger \|_2$ finite that $\|T_i u_i -f_i \|_2 \leq \|T_i u_i -f_i^\dagger \|_2  + \|f_i^\dagger -f_i \|_2< \infty$. Hence the inverse triangle inequality can be employed to show continuity of $f_i \mapsto \|T_i u_i - f_i\|_2 $ w.r.t ${D_i}$-convergence. For $i \in I^c \cap L_\kl$, this is the assertion of Proposition \ref{prop:summary_dkl_properties}.
\end{lem}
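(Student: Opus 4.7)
The plan is to verify Assumption $S_1(u,f^\dagger,I^c)$ index-by-index, splitting into the two cases $i\in I^c \cap L_\nr$ and $i\in I^c \cap L_\kl$, since the data discrepancies and the topologies $\tau_{D_i}$ are qualitatively different in the two regimes.

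For indices $i\in I^c\cap L_\nr$, the discrepancy is $D_i(T_iu_i,f_i)=\|T_iu_i-f_i\|_2^2$ and $D_i$-convergence coincides with $L^2$-norm convergence (since $\tau_{D_i}$ is trivial here). I would first observe that the finiteness hypothesis $\|T_iu_i-f_i^\dagger\|_2<\infty$, combined with $\|f_i^n-f_i^\dagger\|_2\to 0$, ensures via the triangle inequality that $\|T_iu_i-f_i^n\|_2$ is finite and bounded for $n$ large. Then the inverse triangle inequality $|\,\|T_iu_i-f_i^n\|_2-\|T_iu_i-f_i^\dagger\|_2\,|\leq \|f_i^n-f_i^\dagger\|_2$ yields convergence of the norms, and continuity of $t\mapsto t^2$ on bounded sets gives the required continuity of $f_i\mapsto D_i(T_iu_i,f_i)$ at $f_i^\dagger$.

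For indices $i\in I^c\cap L_\kl$, the discrepancy is $D_i(T_iu_i,f_i)=\dkl(T_iu_i+c_i,f_i)$ and $D_i$-convergence means $\dkl(f_i^\dagger,f_i^n)\to 0$ together with $L^1$-convergence. Here the work is already done by Proposition \ref{prop:summary_dkl_properties}(4), which asserts continuity of $f\mapsto \dkl(v,f)$ at $f^\dagger$ with respect to $D_i$-convergence whenever there exist $\beta_0,\beta_1>0$ with $\beta_0 f^\dagger\geq v\geq \beta_1 f^\dagger$ a.e. The assumption $S^{\mathrm{kl}}_1(u,f^\dagger,I^c\cap L_\kl)$ is precisely this two-sided bound applied to $v=T_iu_i+c_i$, so the continuity statement follows immediately by invoking that proposition.

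Combining the two cases, the mapping $f_i\mapsto D_i(T_iu_i,f_i)$ is continuous at $f_i^\dagger$ with respect to $D_i$-convergence for every $i\in I^c$, which is the content of Assumption $S_1(u,f^\dagger,I^c)$. The only mild subtlety is ensuring that the finiteness hypothesis in the norm case is actually needed to rule out the degenerate situation $\|T_iu_i-f_i^\dagger\|_2=\infty$, in which the continuity claim would be vacuous or false; the statement makes this explicit by requiring it as a hypothesis. No further estimates beyond those already contained in the earlier propositions are needed.
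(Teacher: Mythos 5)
Your proposal is correct and follows the paper's own argument essentially verbatim: the same split into $i\in I^c\cap L_\nr$ (finiteness via the triangle inequality, then the inverse triangle inequality for continuity of $f_i\mapsto\|T_iu_i-f_i\|_2$, with the squaring handled by continuity of $t\mapsto t^2$, a step the paper leaves implicit) and $i\in I^c\cap L_\kl$ (direct appeal to Proposition \ref{prop:summary_dkl_properties}, whose two-sided bound hypothesis is exactly Assumption \ref{ass:stability_nkl_1} applied to $v=T_iu_i+c_i$). There is nothing to add or correct.
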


\begin{rem} Remember that Assumption \ref{ass:stability_nkl_1} will be applied with $u$ being a solution to $P_{\text{NKL}}(\lambda,f^\dagger)$ and hence it appears not too restrictive. In particular, it is not stronger that standard assumptions in the context of inverse problems with Poisson noise even with a single data discrepancy \cite{Resmerita07,sawatzky2013tv}.
\end{rem}

Using Assumption \ref{ass:stability_nkl_1}, a basic existence and stability result follows as direct consequence of Proposition \ref{prop:existence_standard} and Corollary \ref{cor:stability_general}.

\begin{prop}[Stability] Assume that Assumption \ref{ass:l2_kl_general} holds.
Let $(f^n)_n$ in $Y$ and $f^\dagger \in Y$ be such that $f^n \overset{{D}}{\rightarrow }f^\dagger$ as $n\rightarrow \infty$ and take $\lambda \in (0,\infty)^ N$. In case $J_\lambda(\cdot,f^n)$ is proper, there exists a solution to $P_{\text{NKL}}(\lambda,f^n)$. If we further assume that, for $u_0$ being a solution to \ref{eq:min_prop_l2_kl}, Assumption $S^{\kl}_1(u_0,f^\dagger,\{1,\ldots,N\} \cap L_\kl)$ holds, then every sequence $(u^n)_n$ of solutions to 
\begin{equation}  
\min _{u \in X} J_{\lambda } (u,f^n),
\end{equation}
admits a $\tau_X$-convergent subsequence again denoted by $(u^n)_n$ with limit $u^\dagger$ such that 
\begin{align*}
\lim_{n\rightarrow\infty} \|T_iu^n_i - f^n_i\|_2 ^2 &= \|T_iu^\dagger_i-f^\dagger_i\|_2 ^2 &\text{for all } i \in L_\nr \\
\lim_{n\rightarrow\infty} \dkl (T_iu^n_i+c_i,f^n_i) &= \dkl(T_iu^\dagger_i+c_i,f^\dagger_i) &\text{for all } i \in L_\kl \\
\lim_{n\rightarrow\infty} R(u^n) &= R(u^\dagger)
\end{align*}
and every limit of a $\tau_X$-convergent subsequence of solutions is a solution to
\begin{equation}  
\min_{u \in X } J_{\lambda} (u,f^\dagger).
\end{equation}
\end{prop}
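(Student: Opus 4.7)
The plan is to obtain this proposition as a direct specialization of the general existence result Proposition~\ref{prop:existence_standard} and the general stability Corollary~\ref{cor:stability_general}. The main step is to verify that all hypotheses of those general results translate, under Assumption~(NKL) together with $S_1^{\kl}(u_0,f^\dagger,\{1,\ldots,N\}\cap L_{\kl})$, into the hypotheses needed here.

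First I would observe that the preceding proposition already shows that Assumption~(NKL) implies Assumption~(G), once the ambient topologies are fixed as stated (weak $L^p$ on $X$ and on $Y_i$ for $i\in L_\nr$, weak $L^1$ on $Y_i$ for $i\in L_{\kl}$, trivial $\tau_{D_i}$ for $i\in L_\nr$, $L^1$-topology on $\tau_{D_i}$ for $i\in L_{\kl}$), treating the Kullback--Leibler terms via the affine operators $\tilde T_iu=T_iu_i+c_i$. With $J_\lambda(\cdot,f^n)$ proper, Proposition~\ref{prop:existence_standard} immediately yields a solution to $P_{\textnormal{NKL}}(\lambda,f^n)$, giving the first assertion.

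For the stability statement, the plan is to invoke Corollary~\ref{cor:stability_general} with $\lambda^\dagger=\lambda$, so I only need to produce Assumption~$S_1(u_0,f^\dagger,\{1,\ldots,N\})$. Since $u_0$ is a minimizer of $J_\lambda(\cdot,f^\dagger)$ and $J_\lambda(u_0,f^\dagger)<\infty$, we have in particular $\|T_iu_{0,i}-f_i^\dagger\|_2<\infty$ for every $i\in L_\nr$ and $\dkl(T_iu_{0,i}+c_i,f_i^\dagger)<\infty$ for every $i\in L_{\kl}$. The lemma immediately preceding the proposition then applies and, combined with the standing hypothesis $S_1^{\kl}(u_0,f^\dagger,\{1,\ldots,N\}\cap L_{\kl})$, gives $S_1(u_0,f^\dagger,\{1,\ldots,N\})$: continuity of $f_i\mapsto\|T_iu_{0,i}-f_i\|_2^2$ at $f_i^\dagger$ with respect to $\tau_{D_i}$-convergence follows from the inverse triangle inequality on $L^2$, while continuity of $f_i\mapsto\dkl(T_iu_{0,i}+c_i,f_i)$ at $f_i^\dagger$ is exactly Point~4 of Proposition~\ref{prop:summary_dkl_properties} under the sandwich bound provided by $S_1^{\kl}$.

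With all hypotheses of Corollary~\ref{cor:stability_general} in place, its conclusion transfers verbatim: every sequence $(u^n)_n$ of solutions to $\min_u J_\lambda(u,f^n)$ admits a weak-$L^p$ convergent subsequence whose limit $u^\dagger$ solves $\min_u J_\lambda(u,f^\dagger)$, together with the convergence of each individual data discrepancy and of $R(u^n)\to R(u^\dagger)$, where the two families of data discrepancies are precisely the squared $L^2$-norms for $i\in L_\nr$ and the Kullback--Leibler terms $\dkl(T_iu_i^n+c_i,f_i^n)$ for $i\in L_{\kl}$. The only non-routine point in the argument is the verification of the continuity hypothesis~$S_1$ for the Kullback--Leibler components, which is why the slightly restrictive sandwich assumption $S_1^{\kl}$ is needed; this is however exactly the standard condition appearing in single-discrepancy Poisson-noise theory, so no new ingredient beyond Proposition~\ref{prop:summary_dkl_properties} is required.
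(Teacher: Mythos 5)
Your proposal is correct and follows exactly the route the paper takes: the paper states this proposition without a written proof, noting only that it follows as a direct consequence of Proposition~\ref{prop:existence_standard} and Corollary~\ref{cor:stability_general}, together with the preceding proposition (Assumption (NKL) implies Assumption (G)) and the lemma relating $S_1^{\kl}$ to $S_1$ via the inverse triangle inequality for the $L^2$ terms and Proposition~\ref{prop:summary_dkl_properties} for the Kullback--Leibler terms. Your additional observation that minimality of $u_0$ (with proper objective) guarantees $\|T_iu_{0,i}-f_i^\dagger\|_2<\infty$, so that the lemma's finiteness hypothesis holds, is precisely the detail needed to make the specialization rigorous.
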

It is immediate that, under Assumption \ref{ass:l2_kl_general} and even without the Assumption \hbox{\ref{ass:stability_nkl_1}}, the counterpart of the convergence result of Corollary \ref{cor:convergence_vanishing_noise_general} holds for the particular setting of this section. In order to obtain rates, we need to translate Assumption \ref{ass:stability_2} appropriately as follows.
\begin{ass}{$S^\textnormal{kl}_2(U,f^ \dagger)$} \label{ass:stability_2_kl} 
\leqnomode
\begin{equation*}
\left\{
\begin{aligned}
& \text{There exist } \beta_0,\beta_1 > 0 \text{ such that,} \text{ for each } i \in L_\kl  \text{ and } v \in U,\,   \beta_0 f^\dagger_i \geq T_i v_i+c_i \geq \beta_1 f^\dagger_i 
\end{aligned}\right.
\end{equation*}
\reqnomode
\end{ass}
Again we note that, when Assumption \ref{ass:stability_2} will be applied in the particular setting of this section, we set $U = \{ (u^n)_n \,|\, n \geq n_0 \} \cup \{u^\dagger \}$ with $(u^n)_n$ being a sequence of minimizers and $u^\dagger$ being a minimizer for data $f^ n$ and $f^\dagger$, respectively. Hence, as can be  easily deduced by the triangle inequality, it will hold that $\|Tu_i-f_i\|_2 + \|Tu_i-f_i^\dagger\|_2< \infty$  for all $u \in U$, $f_i$ such that $\|f_i-f_i^\dagger \|_2 < \infty$ and $i \in L_\nr$. Taking this into account, the relation of  Assumption \ref{ass:stability_2_kl} and Assumption \ref{ass:stability_2} is given as follows.

\begin{lem} Assume that \ref{ass:l2_kl_general} holds and that $U$  is such that for each $u \in U$, $\|T_i u_i - f_i\|_2 + \|T_i u_i - f_i^\dagger\|_2 < \infty$ for each $\|f_i-f_i^\dagger\|_2 <\infty $ and $i \in  L_\nr$. Then Assumption \ref{ass:stability_2_kl} implies Assumption \ref{ass:stability_2} with $\psi_i(x) = x^\frac{1}{2}$ for $i \in L_\nr$ and $\psi_i(x) = x^{1/2}$ for $i \in L_\kl$.
\proof
Given that all involved quantities are finite, we can transfer the result of Proposition \ref{prop:norm_inverse_triangle} also to the extended 2-norm, which implies that \ref{ass:stability_2} holds with $\psi_i (x) =  x^{1/2}$ for $i \in L_\nr$. For $i \in L_\kl$, it is the assertion of Proposition \ref{prop:summary_dkl_properties} that \ref{ass:stability_2} holds with $\psi_i(x) = x^{1/2}$.
\end{lem}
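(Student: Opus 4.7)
The plan is to verify the two inequalities of Assumption \ref{ass:stability_2} separately for the norm indices $i \in L_\nr$ and the Kullback-Leibler indices $i \in L_\kl$, using the two continuity results already established in the excerpt (Proposition \ref{prop:norm_inverse_triangle} and Proposition \ref{prop:summary_dkl_properties}). Because the overall forward operator in this setting is $\widetilde T_i v = T_i v_i$ for $i \in L_\nr$ and $\widetilde T_i v = T_i v_i + c_i$ for $i \in L_\kl$, both cases reduce to checking a uniform (in $v \in U$) modulus-of-continuity estimate for the map $f_i \mapsto D_i(\widetilde T_i v, f_i)$ at $f_i^\dagger$.

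For $i \in L_\nr$, the discrepancy is $D_i(\widetilde T_i v, f_i) = \|T_i v_i - f_i\|_2^2$. The hypothesis on $U$ guarantees $\|T_i v_i - f_i\|_2 < \infty$ as soon as $\|f_i - f_i^\dagger\|_2 < \infty$, so Proposition \ref{prop:norm_inverse_triangle} with $p=2$ applies to the extended two-norm and yields, for $\delta := \|f_i - f_i^\dagger\|_2$ sufficiently small,
\[
\|T_i v_i - f_i\|_2^2 \leq \|T_i v_i - f_i^\dagger\|_2^2 (1 + c\delta) + C\delta,
\]
together with the analogous lower bound. Since $\delta = D_i(f_i^\dagger,f_i)^{1/2}$, we may set $\psi_i(x) = M x^{1/2}$ for a sufficiently large constant $M$ and (as $\tau_{D_i}$ is trivial in the norm case) take $V_i = Y_i$; this delivers both inequalities of Assumption \ref{ass:stability_2}.

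For $i \in L_\kl$, the two-sided bound $\beta_1 f_i^\dagger \leq T_i v_i + c_i \leq \beta_0 f_i^\dagger$ supplied by Assumption \ref{ass:stability_2_kl} is uniform in $v \in U$, so Point~5 of Proposition \ref{prop:summary_dkl_properties} applies with a single constant $C$ for all $v \in U$ whenever $f_i$ lies in the neighborhood $V_i := \{f_i : \dkl(f_i^\dagger,f_i) + \|f_i - f_i^\dagger\|_1 \leq 1\}$ of $f_i^\dagger$ in $\tau_{D_i}$. It gives
\[
|\dkl(T_i v_i + c_i, f_i) - \dkl(T_i v_i + c_i, f_i^\dagger)| \leq C\, \dkl(f_i^\dagger, f_i)^{1/2}.
\]
Splitting the absolute value and absorbing $C\dkl(f_i^\dagger,f_i)^{1/2}$ into the additive $\psi_i(D_i(f_i^\dagger,f_i))$ term produces both inequalities of Assumption \ref{ass:stability_2} with $\psi_i(x) = C x^{1/2}$; the multiplicative factors $1 \pm \psi_i$ are trivially realized (with slack) because the estimate above is purely additive in the reference value $\dkl(T_i v_i + c_i, f_i^\dagger)$.

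I do not anticipate a real obstacle: both steps are essentially quotations of the preceding technical results, and the only delicate bookkeeping is the uniformity in $v \in U$ (guaranteed by the $U$-uniform bounds $\beta_0,\beta_1$ for the Kullback-Leibler part and by the $U$-uniform finiteness condition on the two-norm part) and the choice of $\alpha$, which may be taken as the minimum of the $\alpha$'s produced by the two invoked propositions.
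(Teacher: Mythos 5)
Your proposal is correct and follows essentially the same route as the paper's proof: for $i \in L_\nr$ transfer Proposition \ref{prop:norm_inverse_triangle} (with $p=2$) to the extended two-norm using the finiteness hypothesis on $U$, and for $i \in L_\kl$ invoke Point~5 of Proposition \ref{prop:summary_dkl_properties}, with the $U$-uniform bounds $\beta_0,\beta_1$ supplying the required uniformity; your extra bookkeeping (absorbing constants into $\psi_i$, noting the additive KL estimate trivially yields both inequalities) just makes explicit what the paper leaves implicit. The only cosmetic point is that your $V_i$ for the KL indices should be an $L^1$-open ball (with the $\dkl$ bound handled by the threshold $\alpha$), since a sublevel set of $\dkl(f_i^\dagger,\cdot)+\|\cdot-f_i^\dagger\|_1$ need not be open in $\tau_{D_i}$, but this changes nothing in substance.
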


Using Assumption \ref{ass:stability_2_kl}, we get the following convergence rates result for \ref{eq:min_prop_l2_kl}.
\begin{cor}
Let a sequence $(f^n)_n$ be such that $f^n \overset{{D}}{\rightarrow} f^\dagger\in Y$ as $n\rightarrow \infty $. Define $\delta^n_i = \|f_i^n - f^\dagger_i\|_2$ for $i \in L_{nr}$ and $\delta^n_i = \dkl(f^\dagger_i,f^n_i)$ for $i \in L_{kl}$. Assume that for each $i$,
\[ \delta_i^n \sim  (\delta^n)^{\mu_i} \]
with $(0,\infty) \ni \delta^n \rightarrow 0$ and $\mu_i \geq 1$. 
Define $\overline{\mu} = \min (\{ \mu_i\,|\, i \in L_\nr \} \cup \{ \mu_i/2 \, | \, i \in L_\kl \}) $, set $\epsilon_i = \overline{\mu}/\mu_i$ for all $i$, let the parameter choice be such that
\[
\begin{cases} \lambda_i^n  \sim (\delta_i^n)^{-(2-\epsilon_i) } &\text{for }i \in I\cap L_\nr,\\
 \lambda_i^n \sim (\delta_i^n)^{-(1-\epsilon_i)} &\text{for }i \in I\cap L_\kl, \end{cases}
 \] and denote $\lambda^\dagger = \lim _{n \rightarrow \infty} \lambda ^n \in (0,\infty]^N$. Let $(u^n)_n $ be a sequence of solutions to $P_{\text{NKL}}(\lambda^n,f^n)$ and let $u^\dagger $ be an $R$-minimizing solution of $T_i u_i = f^\dagger$. Set $U_{n_0} = \{u^n \,|\, n \geq n_0 \} \cup \{u^\dagger \}$ and assume that $S_2^\kl(U_{n_0},f^\dagger)$ holds for some $n_0$. Then
\begin{equation}
R(u^n) \leq R(u^\dagger)+ C (\delta^n)^{\overline{\mu} }.
\end{equation}
If we assume further that there exists 
\[\xi \in \partial R(u^\dagger), \] constants $0<\beta_1<1$, $0<\beta_2$ such that
 \begin{equation} \tag{SC}
- \langle\xi, u-u^\dagger\rangle \leq \beta_1 D_{R}^\xi(u,u^\dagger) +\beta_2 \sum_{i \in L_\nr} \|T_iu_i-T_iu_i^\dagger\|_2 ^2 +\beta_2  \sum_{i \in L_\kl} \dkl (T_iu_i+c_i,T_iu_i^\dagger+c_i)
\end{equation}
for all $u$ satisfying $R(u)+  \sum_{i \in L_\nr} \|T_i u_i - f^\dagger _i\|_2 ^2 + \sum_{i \in L_\kl} \dkl(T_iu_i+c_i,f^\dagger_i) \leq R(u^\dagger)+ \epsilon$ for some $\epsilon > 0$, then we obtain for any $j \in \{1,\ldots,N\}$,
\begin{align} 
D_{R}^\xi(u^n,u^\dagger)
&= O\bigg( (\delta^n)^{\overline{\mu} }    \bigg),
\\
\|T_ju_j^n - f_j\|_2 ^2
&= O\bigg( (\delta^n)^{2\mu_j}  \bigg)  \quad \text{for } j \in L_\nr \\
\dkl(T_ju_j^n+c_i,f_j) 
&= O\bigg( (\delta^n)^{\mu_j}  \bigg)  \quad \text{for } j \in L_\kl
\end{align}
\proof
The estimate on $R$ is an immediate consequence of Theorem \ref{thm:general_convergence_rates} and the parameter choice. In case the source condition holds, the theorem also implies that
\begin{multline*}
\sum_{i \in L_\nr} \lambda_i^n\|T_iu_i^n - f^n_i\|^2_2 +  \sum_{i \in I} \lambda_i^n \dkl\big(T_iu^n+c_i,f_i^n\big) + (1-\beta_1) D^\xi_{R} (u^n,u^\dagger) \\ \leq 
\beta_2 \sum_{i \in L_\nr} \|T_iu_i^n - f^\dagger_i\|_2 
+ \beta_2 \sum_{i \in L_\kl} \dkl\big(T_iu^n+c_i,f_i^\dagger\big) 
+ C \bigg( \sum_{i \in L_\nr} \lambda_i^n (\delta_i^n)^2+  \sum_{i \in L_\kl} \lambda_i^n \delta_i^n  \bigg) .
\end{multline*}
Now using Young's inequality we can estimate for $i \in L_\nr$ and $C>0$ a generic constant
\[ \beta_2 \|T_iu_i^n - f^\dagger_i\|_2\leq \beta_2 \|T_iu_i^n - f^n_i\|_2 + C \delta_i^n  \leq 
 \frac{\lambda_i^n\|T_i u^n - f^n_i\|^{2}_{Y_i}}{2} + C( (\lambda_i^n)^{-1} + \delta_i^n) \]
Further, by $S^\kl_2(U_{n_0},f^\dagger)$ we can estimate
\[ \dkl \big(T_iu^n+c_i,f_i^\dagger\big) \leq  C (\dkl \big(T_iu^n+c_i,f_i^n\big) + (\delta^n)^{\frac{1}{2}}).
\]
Together, this implies that
\begin{multline*}
\sum_{i \in L_N} (\lambda_i^n-C)\|T_iu_i^n - f^n_i\|^2_2 +  \sum_{i \in I} (\lambda_i^n-\beta_2C) \dkl\big(T_iu^n+c_i,f_i^n\big) + (1-\beta_1) D^\xi_{R} (u^n,u^\dagger) \\\leq 
+ C \bigg( \sum_{i \in L_\nr} (\lambda_i^n)^{-1}  + \delta_i^n + \lambda_i^n (\delta_i^n)^2 
+  \sum_{i \in L_\kl} (\delta_i^n)^\frac{1}{2} + \lambda_i^n (\delta_i^n)  \bigg) .
\end{multline*}
Plugging in the parameter choice this implies the rates as claimed.
\end{cor}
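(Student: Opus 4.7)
The plan is to apply the general convergence rates result of Theorem~\ref{thm:general_convergence_rates} with $I=\{1,\ldots,N\}$, $I^c=\emptyset$, $p_i=2$ for $i\in L_\nr$ and $p_i=1$ for $i\in L_\kl$. Since $I^c$ is empty, the reduced functional $J_{\lambda^\dagger,I^c}$ collapses to $R$ and the $\phi_i$-terms disappear entirely. By the preceding lemma, Assumption~\ref{ass:stability_2_kl} ensures that Assumption~\ref{ass:stability_2} holds for the set $U_{n_0}$ with modulus functions $\psi_i(x)=x^{1/2}$ in both the norm and KL case, so all hypotheses of Theorem~\ref{thm:general_convergence_rates} are met.

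The first assertion on $R(u^n)$ is then immediate from the estimate \eqref{eq:objective_functional_rates} together with the prescribed parameter choice: the contribution of the $i$th summand is $\lambda_i^n(\delta_i^n)^{p_i}\sim(\delta_i^n)^{\epsilon_i}=(\delta^n)^{\mu_i\epsilon_i}=(\delta^n)^{\overline\mu}$ both for $i\in L_\nr$ (where $p_i=2$ and $\lambda_i^n\sim(\delta_i^n)^{-(2-\epsilon_i)}$) and for $i\in L_\kl$ (where $p_i=1$ and $\lambda_i^n\sim(\delta_i^n)^{-(1-\epsilon_i)}$). This is precisely why the definition $\overline\mu=\min(\{\mu_i:i\in L_\nr\}\cup\{\mu_i/2:i\in L_\kl\})$ balances the two regimes.

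Under the source condition, Theorem~\ref{thm:general_convergence_rates} further yields, after finitely many indices,
\begin{multline*}
\sum_{i\in L_\nr}\lambda_i^n\|T_iu_i^n-f_i^n\|_2^2+\sum_{i\in L_\kl}\lambda_i^n\dkl(T_iu_i^n+c_i,f_i^n)+(1-\beta_1)D_R^\xi(u^n,u^\dagger)\\
\leq\beta_2\Bigl(\sum_{i\in L_\nr}\|T_iu_i^n-T_iu_i^\dagger\|_2+\sum_{i\in L_\kl}\dkl(T_iu_i^n+c_i,T_iu_i^\dagger+c_i)\Bigr)+C\sum_i\lambda_i^n(\delta_i^n)^{p_i}.
\end{multline*}
The main technical step is to absorb the $\beta_2$-terms on the right into the left-hand side. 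For $i\in L_\nr$, I would use the triangle inequality and $T_iu_i^\dagger=f_i^\dagger$ to replace $\|T_iu_i^n-T_iu_i^\dagger\|_2$ with $\|T_iu_i^n-f_i^n\|_2+\delta_i^n$, then apply Young's inequality in the form $\beta_2 a\leq\tfrac12\lambda_i^n a^2+C(\lambda_i^n)^{-1}$ with $a=\|T_iu_i^n-f_i^n\|_2$. For $i\in L_\kl$, I would invoke $S_2^\kl(U_{n_0},f^\dagger)$ (in the form given by Proposition~\ref{prop:summary_dkl_properties}) to estimate $\dkl(T_iu_i^n+c_i,T_iu_i^\dagger+c_i)\leq C(\dkl(T_iu_i^n+c_i,f_i^n)+(\delta_i^n)^{1/2})$, so that the KL-distance on the right is absorbed once $\lambda_i^n$ exceeds a threshold.

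After these absorptions, all three left-hand quantities $\lambda_i^n\|T_iu_i^n-f_i^n\|_2^2$, $\lambda_i^n\dkl(T_iu_i^n+c_i,f_i^n)$ and $D_R^\xi(u^n,u^\dagger)$ are bounded (up to a constant) by
\[\sum_{i\in L_\nr}\bigl((\lambda_i^n)^{-1}+\delta_i^n+\lambda_i^n(\delta_i^n)^2\bigr)+\sum_{i\in L_\kl}\bigl((\delta_i^n)^{1/2}+\lambda_i^n\delta_i^n\bigr).\]
With the prescribed $\lambda_i^n$, each summand scales like $(\delta^n)^{\overline\mu}$, which yields the rate for $D_R^\xi$. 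Dividing by $\lambda_j^n$ and multiplying by the reciprocal scaling $\lambda_j^n\sim(\delta_j^n)^{-(p_j-\epsilon_j)}$ then produces the claimed rates $(\delta^n)^{2\mu_j}$ for $j\in L_\nr$ and $(\delta^n)^{\mu_j}$ for $j\in L_\kl$. The only nontrivial obstacle is the Young/absorption argument in the KL case, where one must be careful that the sub-root modulus $\psi_i(x)=x^{1/2}$ combines with the KL Young inequality to still land at rate $(\delta^n)^{\overline\mu}$; this is precisely the reason for defining $\overline\mu$ using $\mu_i/2$ for KL indices.
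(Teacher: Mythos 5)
Your proposal is correct and follows essentially the same route as the paper's proof: you apply Theorem \ref{thm:general_convergence_rates} with $I=\{1,\ldots,N\}$, $p_i=2$ for $i\in L_\nr$ and $p_i=1$ for $i\in L_\kl$, translate $S_2^{\kl}$ into $S_2$ with $\psi_i(x)=x^{1/2}$, absorb the $\beta_2$-terms via the triangle plus Young inequality for the norm indices and via Proposition \ref{prop:summary_dkl_properties} together with $\lambda_i^n\to\infty$ for the Kullback--Leibler indices, and then substitute the parameter choice. Your exponent bookkeeping (each residual term being $O((\delta^n)^{\overline{\mu}})$ precisely because $\overline{\mu}$ uses $\mu_i/2$ for KL indices) matches the paper's computation, so there is nothing to add.
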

We note that the rates for the data terms above are optimal in the sense that, even for single data term regularization with the corresponding data term, the rate would be the same. Since the Bregman distance involves all components of the unknown, we naturally obtain the worst-case rate amongst all data terms.

\section{Particular choices for coupled regularization}
In this subsection, we show how the previous results on mixed $L^2$ and Kullback-Leibler discrepancies can be applied for concrete regularization functionals. First we consider a rather simple example of joint wavelet-sparsity regularization, where the regularization term is coercive in $L^2$. As second application, we consider coupled second order Total Generalized Variation (TGV) \cite{bredies2010tgv} regularization, where coercivity can only be established up to finite dimensional subspaces, and show how our results can also be applied in this situation.
\subsection{Joint wavelet sparsity}
Wavelets are well-known and heavily used in image processing and beyond for their property of sparse approximation of data. This is exploited for example in image compression and denoising applications \cite{mallat2009wavelettour}. When dealing with the regularization of multi-spectral data where the individual components are correlated, a natural approach for coupled regularization is to enforce joint sparsity of wavelet coefficients, see for instance \cite{knoll14wavelet_mri_pet}. This is realized by the regularization functional
\[  u \mapsto \|W_N u\|_{2,1} ,\]
where $W_Nu = (Wu_1,\ldots,Wu_N)$ with $W:L^2(\Omega) \rightarrow \ell^2$ a biorthogonal wavelet transform, $\Omega$ a bounded set and, for $z= (z_1,\ldots,z_N)  = ((z_{i,1})_i,\ldots,(z_{i,N})_i) \in (\ell^2)^N$ we define
\[ \|z\|_{2,1} = \sum_i \Big(\sum_{j=1}^N z_{i,j}^2\Big)^{1/2}. \]
With the definitions of Section \ref{sec:l2_kl_general}, we then consider
\begin{equation} \tag*{$P_{\text{NKL-W}}(\lambda,f)$} \label{eq:min_prop_l2_kl_wav}
\min _{u  \in L^2(\Omega)^N } \|W_Nu\|_{2,1}  + \sum_{i\in L_\nr} \lambda_i \|T_i u_i - f_i\|_2 ^2 + \sum_{i\in L_\kl} \lambda _i \dkl(T_i u_i+c_i,f_i).
\end{equation}

Then, with $R(u) = \|W_Nu\|_{2,1}  $, it is easy to see that $R$ is lower semi-continuous w.r.t weak $L^2$ convergence and, setting $p=2$, that also the compactness requirement of Assumption \ref{ass:l2_kl_general} is satisfied since $c \|u\|_{L^2} \leq \|W_Nu\|_2 \leq \|W_Nu\|_{2,1}$ for all $u \in L^2(\Omega)^N$ and $c>0$ (see \cite{mallat2009wavelettour}).

 Hence if all the forward operators $T_i$ are continuous, Assumption \ref{ass:l2_kl_general} holds and all results of Section \ref{sec:l2_kl_general} apply.

\subsection{Joint Total Generalized Variation regularization}
The Total Generalized Variation functional has been introduced in 2010 \cite{bredies2010tgv} with the aim of overcoming the well-known staircasing effect of the Total Variation (TV) functional while still allowing for jump discontinuities in the reconstruction. It has been analyzed in \cite{bredies2013tgvregularization,Bredies11_inverse} in the context of regularization of linear inverse problems and has been heavily used in diverse applications, including the joint reconstruction of magnetic resonance (MR) and positron emission tomography (PET) images \cite{holler16mri_pet}. The latter employs an extension of second order TGV for vector-valued data where the coupling of the individual terms is carried out via a Nuclear and Frobenius norm at the level of first and second order derivatives, respectively. In this section, we show how the general results of Section \ref{sec:l2_kl_general} can be applied to an extended version of this setting. Numerical examples for concrete applications to multi-contrast MR and PET-MR regularization will then be considered in the next section.

For parameters $\alpha_0,\alpha_1 > 0$ and $u=(u_1,\ldots,u_N) \in \BV(\Omega)^N$ with $\Omega$ a bounded Lipschitz domain we define the second order TGV functional as
\begin{equation}
\TGVat(u)= \min_{w\in \BD(\Omega,\mathbb{R}^{d})^N} \alpha_1\| \Wrt u -w \|_\mathcal{M}+ \alpha_0\| \symgrad w\| _\mathcal{M},
\end{equation}
noting that by the results in \cite{bredies2013tgvregularization} this is equivalent to the original definition of $\TGVat$ as in \cite{bredies2010tgv}. Here, $\Wrt u = (\Wrt u_1,\ldots,\Wrt u_N)$ and $\symgrad w = (\symgrad w_1,\ldots,\symgrad w_N) $ denote the distributional derivative and symmetrized derivative of $u$ and $w=(w_1,\ldots,w_N)$, respectively, and $\BD(\Omega,\R^{d})^N$ denotes the space of functions of bounded deformation, i.e., the space of all $w \in L^1(\Omega,\R^d)^N$ such that $ \symgrad w$ can be represented by a finite Radon measure. For $z \in \M(\Omega,Z)^N$, the space of N-tuples of finite Radon measures with values in $Z \in \{ \R^{d},S^{d \times d}\} $ where $S^{d \times d}$ is the space of symmetric $d\times d$ matrices, we define
\[ \|z \|_{\M}  = \sup _{ \substack{ \phi \in C_c(\Omega,Z)^N \\ |\phi(y)| \leq 1 \text{ for all }y\in \Omega }} \int_\Omega \phi(x) \wrt z(x),\]
where $|\cdot |$ can be any pointwise norm on $Z^N$. We note that in fact this pointwise norm defines the coupling of the different components of $Du$ and $\symgrad w$, e.g., choosing $|\cdot|$ to be a Frobenius norm results in a Frobenius-norm-type coupling of $\Wrt u$ and $\symgrad w$. In \cite{holler16mri_pet}, $|\cdot|$ was chosen to be the spectral norm for $Z = \R^{d}$, yielding a nuclear-norm coupling of the components of $\Wrt u$.

With the definitions of Section \ref{sec:l2_kl_general} and $p \in (1,d/(d-1)]$, we now consider the minimization problem
\begin{equation*} 
\min _{u  \in L^p(\Omega)^N } \TGVat(u)+ \I_{\text{KL}_+}(u) + \sum_{i\in L_\nr} \lambda_i \|T_i u_i - f_i\|_2 ^2 + \sum_{i\in L_\kl} \lambda _i \dkl(T_i u_i+c_i,f_i).
\end{equation*}
Here, $\I_{\text{KL}_+}(u)$ is the indicator function of the set $ \text{KL}_+:= \{ u \in L^p(\Omega)^ N \,|\, u_i \geq 0 \text{ a.e. for all } i \in L_\kl\}$ and constrains all $u_i$ for $i\in L_\kl$ to be non-negative, i.e., 
\[\I_{\text{KL}_+}(u) = \begin{cases} 0 &\text{if } u \in \text{KL}_+, \\  \infty & \text{else.} \end{cases}\]

Our goal is to show that Assumption \ref{ass:l2_kl_general} holds for this setting. However, since $\TGVat$ itself is coercive only up to a finite dimensional subset, we have to employ a slight modification of this setup to obtain coercivity (without imposing further assumptions on the $T_i$), which is equivalent in the sense that solutions to the modified problem are still solutions to the original problem. That is, with $\mathcal{P}^1(\Omega)^N$ the set of $\R^N$-valued polynomials of order less or equal to one, we define the finite dimensional subspace
\[ Z = \mathcal{P}^1(\Omega)^N \cap \ker(T)  ,\]
where $T = (T_1,\ldots,T_N)$. Further, we define 
\[ Z^\perp = \{ u \in L^p(\Omega)^N \,|\, \int _\Omega (u ,v) = 0 \text{ for all }v \in Z\},\]
where $(\cdot,\cdot)$ denotes the standard inner product in $\R^N$,
and note that, as can be easily shown, $Z^\perp$ is a complement of $Z$ in $L^p(\Omega)$, i.e., $Z^\perp$ is closed, $Z\cap Z^\perp = \{0\}$ and $L^p(\Omega) = Z + Z^\perp$. 

We then consider the modified problem
\begin{equation} \tag*{$P_{\text{NKL-TGV}}(\lambda,f)$} \label{eq:min_prop_l2_kl_TGV} 
\min _{u  \in L^p(\Omega)^N } \TGVat(u)+ \I_{\text{KL}_+}(u) + \I_{Z^ \perp}(u) + \sum_{i\in L_\nr} \lambda_i \|T_i u_i - f_i\|_2 ^2 + \sum_{i\in L_\kl} \lambda _i \dkl(T_i u_i,f_i),
\end{equation}
which is equivalent to the original one in the sense that any solution of \eqref{eq:min_prop_l2_kl_TGV} is a solution of the original one and, with $u = u_1 + u_2 \in Z^\perp + Z$ being a solution of the original problem, $u_1$ is a solution of \ref{eq:min_prop_l2_kl_TGV}.

For this setting, we get the following assertion.
\begin{prop} With the setting of Section \ref{sec:l2_kl_general}, let $p\in (1,d/(d-1)]$ and assume that  $T_i\in  \mathcal{L}(L^p(\Omega) , L^p(\Sigma_i))$ for $i \in L_\nr$ and $T_i \in \mathcal{L}(L^p(\Omega) , L^1(\Sigma_i,\mu_i))$ for $i \in L_\kl$. Set $R = \TGVat +\I_{\text{KL}_+} +\I_{Z^\perp}$. Then Assumption \ref{ass:l2_kl_general} holds for \ref{eq:min_prop_l2_kl_TGV} and all results of Section \ref{sec:l2_kl_general} apply.
\begin{proof}
First note that $\TGVat$, $\I_{\text{KL}_+}$ and $\I_{Z^\perp}$ are lower semi-continuous w.r.t weak $L^p$ convergence (see for instance \cite{BH_tgvrec_p1_14} for $\TGV$ in the vector-valued case). Hence we are left to show that any sequence $(u^n)_n $ in $L^p(\Omega)^N$ such that 
\[ \TGVat(u^n) + \I_{\text{KL}_+} + \I_{Z^\perp}(u^n) + \sum_{i\in L_\nr} \|T_i u^n_i\|_{p}  + \sum_{i \in I_{\kl}} \|T_i u^n_i \|_1 < C \] admits an $L^p$-weakly convergent subsequence. 
To this aim, we first define the operator $P_{\mathcal{P}^1}:L^p(\Omega) \rightarrow L^p(\Omega) $ as 
\[ P_{\mathcal{P}^1}(u) = v \quad \Leftrightarrow \quad v \in \mathcal{P}^1(\Omega)^N \text{ and } \int _\Omega (u, p) = \int _\Omega (v, p) \quad \text{for all } p \in \mathcal{P}^1(\Omega)^N .\]
It is easy to see that $P_{\mathcal{P}^1}$ is well-defined and a continuous linear projection. Then, by results in \cite{bredies2013tgvregularization,BH_tgvrec_p1_14}, there exists $C>0$ such that
\[ \|u^n - P_{\mathcal{P}^1}(u^n) \|_{p} \leq C \TGVat(u^n). \]
Hence, in order to obtain existence of a weakly convergent subsequence in $L^p(\Omega)^N$, we are left to show boundedness of $P_{\mathcal{P}^1}(u^n)$. For this, note that since $u^n \in Z^\perp$, also $P_{\mathcal{P}^1}(u^n) \in Z^\perp$ for all $n$ and since $T$ is injective on the finite dimensional space $\mathcal{P}^1(\Omega)^N \cap Z^\perp$ we get from equivalence of norms in finite dimensions that and boundedness of $u^n_i - P_{\mathcal{P}^1}(u^n)_i$ in $L^p$
\begin{align*}
 \| P_{\mathcal{P}^1}(u^n) \|_p \leq C \|T P_{\mathcal{P}^1}(u^n) \|_p 
 & \leq \sum_{i\in L_\nr} \|T_i P_{\mathcal{P}^1}(u^n)_i\|_{p}  + \sum_{i \in I_{\kl}} \|T_i P_{\mathcal{P}^1}(u^n)_i \|_1 \\
  & \leq \sum_{i\in L_\nr} \|T_i u^n_i\|_{p}  + \sum_{i \in I_{\kl}} \|T_iu^n_i \|_1  + C\\
 \end{align*}
for some $C>0$ and the proof is complete.
\end{proof}
\end{prop}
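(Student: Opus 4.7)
The plan is to verify the three points of Assumption \ref{ass:l2_kl_general}. Point 2 (continuity of the forward operators) is immediate by hypothesis. For Point 1 (lower semi-continuity of $R=\TGVat+\I_{\text{KL}_+}+\I_{Z^\perp}$ with respect to weak $L^p$-convergence), I would treat the three summands separately: weak $L^p$-lsc of $\TGVat$ in the vector-valued case is available from existing work on TGV-type regularization (cf.\ \cite{BH_tgvrec_p1_14,bredies2013tgvregularization}); the constraint set $\text{KL}_+$ is a closed convex cone in $L^p(\Omega)^N$, hence weakly closed, so $\I_{\text{KL}_+}$ is weakly lsc; and $Z^\perp$ is the intersection of kernels of finitely many continuous linear functionals (the pairings against a basis of the finite-dimensional space $Z\subset \mathcal{P}^1(\Omega)^N$, which sits in $L^{p'}(\Omega)^N$ since polynomials are bounded on the Lipschitz domain $\Omega$), so $\I_{Z^\perp}$ is also weakly lsc.

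The core of the proof is Point 3, the weak sequential compactness of the sublevel sets. Let $(u^n)_n$ satisfy
\[
\TGVat(u^n)+\I_{\text{KL}_+}(u^n)+\I_{Z^\perp}(u^n)+\sum_{i\in L_\nr}\|T_iu^n_i\|_{L^p}+\sum_{i\in L_\kl}\|T_iu^n_i\|_{L^1} < C.
\]
I would invoke the known Poincaré-type inequality for $\TGV$ (from \cite{bredies2013tgvregularization,BH_tgvrec_p1_14}), providing a continuous linear projection $P_{\mathcal{P}^1}\colon L^p(\Omega)^N\to \mathcal{P}^1(\Omega)^N$ and a constant $c>0$ with
\[
\|u-P_{\mathcal{P}^1}(u)\|_{L^p} \leq c\,\TGVat(u)\quad\text{for all } u\in L^p(\Omega)^N,
\]
which bounds the non-polynomial part of $u^n$. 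To control $P_{\mathcal{P}^1}(u^n)$ itself, I would first check that $P_{\mathcal{P}^1}$ stabilises $Z^\perp$: since $Z\subset \mathcal{P}^1(\Omega)^N$, the defining identity of $P_{\mathcal{P}^1}$ gives $\int_\Omega (P_{\mathcal{P}^1}(u^n),v)=\int_\Omega(u^n,v)=0$ for every $v\in Z$, so $P_{\mathcal{P}^1}(u^n)\in \mathcal{P}^1(\Omega)^N\cap Z^\perp$. On this finite-dimensional subspace, $T=(T_1,\dots,T_N)$ is injective by the very definition $Z=\mathcal{P}^1(\Omega)^N\cap \ker(T)$, so equivalence of norms on finite dimensions yields $c'>0$ with
\[
\|v\|_{L^p}\leq c'\Bigl(\sum_{i\in L_\nr}\|T_iv_i\|_{L^p}+\sum_{i\in L_\kl}\|T_iv_i\|_{L^1}\Bigr)\quad\text{for all } v\in \mathcal{P}^1(\Omega)^N\cap Z^\perp.
\]
Applying this to $v=P_{\mathcal{P}^1}(u^n)$ and splitting $T_iP_{\mathcal{P}^1}(u^n)_i = T_iu^n_i - T_i(u^n_i-P_{\mathcal{P}^1}(u^n)_i)$, the first term is bounded by hypothesis, and the second by continuity of $T_i$ combined with the $\TGV$-based bound on $u^n-P_{\mathcal{P}^1}(u^n)$. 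This gives boundedness of $\|P_{\mathcal{P}^1}(u^n)\|_{L^p}$ and hence of $\|u^n\|_{L^p}$; reflexivity of $L^p(\Omega)^N$ for $p>1$ then extracts a weakly convergent subsequence, completing Point 3.

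The delicate aspects, where I expect the work to concentrate, are (i) invoking the correct vector-valued $\TGV$ Poincaré inequality with a suitable projection $P_{\mathcal{P}^1}$, and (ii) verifying that $P_{\mathcal{P}^1}$ preserves $Z^\perp$ so that injectivity of $T$ on the finite-dimensional space $\mathcal{P}^1(\Omega)^N\cap Z^\perp$ (which is the purpose of the auxiliary constraint $\I_{Z^\perp}$) can actually be exploited. The rest is a routine splitting-and-estimation argument, after which the three items of Assumption \ref{ass:l2_kl_general} combine to deliver the claim.
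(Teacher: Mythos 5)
Your proposal is correct and takes essentially the same route as the paper's proof: the same projection $P_{\mathcal{P}^1}$ together with the TGV Poincar\'e-type inequality, the same key observation that $P_{\mathcal{P}^1}$ preserves $Z^\perp$ so that injectivity of $T$ on the finite-dimensional space $\mathcal{P}^1(\Omega)^N\cap Z^\perp$ yields the norm-equivalence bound, and the same splitting of $T_iP_{\mathcal{P}^1}(u^n)_i$ to conclude boundedness of $(u^n)_n$ in $L^p(\Omega)^N$. The extra details you supply (weak closedness of $\mathrm{KL}_+$ and $Z^\perp$, the explicit verification that $P_{\mathcal{P}^1}$ stabilises $Z^\perp$, and the final appeal to reflexivity) merely make explicit what the paper leaves implicit.
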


\section{Numerical realization and examples}
In this section we sketch how coupled second order TGV regularization with multiple data discrepancies can be realized numerically and provide some examples at the end. Source code that realizes the outlined algorithmic framework for any number of data terms in dimensions two and three is provided online \cite{joint_tgv_code}.

With $\Omega_h$ being a discretized pixel grid in $\R^d$ and the discrete vector spaces $U:= \{ u:\Omega_h \rightarrow \R^N\}$ an $V:= \{ v:\Omega_h \rightarrow (\R^d)^N\}$, we consider the following minimization problem.
\begin{equation} \label{eq:discrete_prop_general}
\min _{u \in U,v \in V} \alpha_1 \|\nabla u - v \|_1    + \alpha_0 \|\symgrad v\|_1+ \I_{\text{KL}_+} (u) + \sum _ {i \in L_\nr} \lambda_i \|T_i u_i - f_i\|_2 ^2 + \sum _ {i \in L_\kl} \lambda_i \dkl(T_i u_i +c_i,f_i),
\end{equation}
where we now simultaneously minimize over the unknown $ u$ and the balancing variable $v$ appearing in the definition of $\TGVat$. We equip the discrete vector spaces with the standard inner products $(u,s):= \sum_{x \in \Omega_h} \sum_{i=1}^N u_i(x)s_i(x)$ for $u,s \in U$ and similarly for $V$, and assume that all involved operators and functionals are now appropriately discretized (see for instance \cite{BH_tgvrec_p2_14,Bredies12multichannel} for a detailed explanation on a possible discretization of $\TGVat$). In particular, $\nabla :U \rightarrow V$ is a discrete gradient operator and $\symgrad: V \rightarrow W:= (\R^{d \times d})^N$ a discrete symmetrized gradient operator, and the terms $\|z\|_1$ for $z \in V$ or $z \in W$ denote discrete $L^1$ norms such that $\|z\|_1 = \sum_{x \in \Omega_h} |z(x)|$ with $|\cdot|$ appropriate pointwise norms  on $(\R^d)^N$ or $(\R^{d \times d})^N$ that define the coupling of the individual components.

For simplicity, we assume that $T = (T_1,\ldots,T_N)$ does not vanish on affine functions, hence existence can be ensured without the additional constraint on $Z^\perp$. 
Re-writing the discrete minimization problem \eqref{eq:discrete_prop_general} in the form
\[ \min_{x = (u,v)} F(Kx) + G(x) \]
with $Kx = K(u,v) = (\nabla u - v,\symgrad v, Tu)$, $G=\I_{\text{KL}_+}$ and $F$ chosen appropriately, in the discrete setting it follows from standard arguments from convex analysis (\cite[Theorem III.4.1 and Proposition III.3.1]{Ekeland}) that \eqref{eq:discrete_prop_general} is equivalent to a saddle-point problem of the form
\[ \min _x \max _y \, (Kx,y) + G(x) - F^* (x) ,\]
where $F^*(y^*): = \sup_y (y,y^*) - F(y)$ denotes the convex conjugate of $F$.
More explicitly, this yields equivalence of \eqref{eq:discrete_prop_general} to the saddle-point problem
\begin{multline} \label{eq:discrete_saddle_point_problem}
 \min _{u,v} \max _{ \substack{p,q, r }} \, ( \nabla u - v,p) + (\symgrad v ,q) + (Tu,r) + \I_{\text{KL}_+} (u) \\  - \sum_{i \in L_\nr}\frac{1}{2\lambda_i} \|r_i\|_2^2 - (r_i,f_i) - \sum_{i \in L_\kl} (\lambda_i\dkl)^*(r_i,f_i) + (r_i,c_i)
 \end{multline}
where the variables $p,q,r$ are defined in appropriate image spaces of the operators $\nabla$, $\symgrad$ and $T$, respectively and $(\lambda_i\dkl)^*(r_i,f_i) := \sup _{s} (r_i,s) - \lambda_i\dkl(s,f_i)$. Employing the general Algorithm of \cite{pock2011primaldual} to this saddle-point reformulation, we arrive at Algorithm \ref{alg:algorithm} for obtaining a solution of \eqref{eq:discrete_saddle_point_problem}.
 \begin{algorithm}[t]
\begin{algorithmic}[1]
\onehalfspacing

\Function{Coupled-Reconstruction}{$(f_i)_{i},(c_i)_i$}

\State Initialize $u,\overline{u},v,\overline{v},p,q,r,\sigma,\tau$
\Repeat
\State $ p \gets \text{prox}_{\alpha_1} (p + \sigma \nabla \overline{u} - \overline{v} )$
\State $ q \gets \text{prox}_{\alpha_0} (q + \sigma \symgrad \overline{v} )$
\State $ r_i \gets \text{prox} _{\|\cdot\|_2 ^2,\lambda_i} (r_i + \sigma  T_i \overline{u}_i - \sigma f_i) \quad \text{for }i \in L_\nr $
\State $ r_i \gets \text{prox} _{\dkl,\lambda_i} (r_i + \sigma  T_i \overline{u}_i + \sigma c_i) \quad \text{for }i \in L_\kl $
\State $ u_+ \gets \text{prox}_{\text{KL}_+} \big(u - \tau (-\dive p + T^* r) \big)$
\State $ v_+ \gets v - \tau (-p - \dive q) $
\State $ (\overline{u}_+,\overline{v}_+) \gets2 (u_+,v_+) - (u,v) $
\State $ (\sigma_+,\tau_+) \gets \mathcal{S}(\sigma,\tau,u_+,v_+,u,v) $
\State $ (u,\overline{u}) \gets(u_+,\overline{u}_+) $
\State $ (v,\overline{v}) \gets (v_+,\overline{v}_+) $
\State $ (\sigma,\tau) \gets (\sigma_+,\tau_+)  $
\Until{Stopping criterion fulfilled}
\State \Return{$u$}
\EndFunction
\end{algorithmic} 
\caption{Scheme of implementation for solving \eqref{eq:discrete_saddle_point_problem}}\label{alg:algorithm}
 \end{algorithm}
 Note that there, the operation $\text{prox}$ corresponds to proximal mappings which are, for a function $g\colon H \to H$ on a Hilbert space $H$ and $\alpha>0$, defined as
\begin{equation*}
\text{prox}_{g,\alpha}\colon H \to H, \quad  \text{prox}_{g,\alpha}(x_0)=\argmin_{x\in H} \frac{\| x-x_0\|_H^2}{2}+\alpha f(x).
\end{equation*}
Here, at any point $x \in \Omega_h$ in the discrete spatial grid and for $i \in \{0,1\}$, the mapping $\text{prox}_{\alpha_i}$ can be given explicitly as
\[(\text{prox}_{\alpha_i} (\hat{z}))(x) = \proj_{\{ z \,:\, |z| \leq \alpha_i \}}(\hat{z}(x)) ,\]
that is, it reduces to a pointwise projection with respect to the pointwise norm. In case $|\cdot|$ is the Frobenius norm (i.e. the root of the squared sum of all entries on $(\R^d)^N$ or $(S^{d\times d})^N$), the projection is given as $\proj_{\{ z \,:\, |z| \leq \alpha_i \}}(\hat{z}) = \frac{\hat{z}}{\max\{1,|\hat{z}|/\alpha_i\}}$. In the case that $|\cdot|$ is the nuclear-norm on $(\R^d)^N$ the computation requires a pointwise SVD of $2\times2$ or $3\times3$ matrices (depending on the spatial dimensions and the number of channels) at every point and efficient code to compute this is provided in the source code \cite{joint_tgv_code}. The other proximal mappings can be given explicitly and again pointwise for any $x \in \Omega_h$ as 
\begin{align*}
(\text{prox} _ {\|\cdot\|_2 ^2,\lambda_i}(r_i))(x) &= \frac{r_i(x)}{1 + \sigma/\lambda_i},\\
(\text{prox} _ {\dkl,\lambda_i}(r_i))(x) &= r_i(x) - \frac{r_i(x) - \lambda_i + \sqrt{(r_i(x) - \lambda_i)^2 + 4\sigma\lambda_i f_i(x)}}{2}, \\
(\text{prox}_{\text{KL}_+}(u)_i)(x) &= 
\begin{cases} 
u_i(x) &\text{if } u_i(x) \geq 0 \text{ or } i \notin L_\kl, \\ 0 &\text{else.}
\end{cases}
\end{align*}
The operator $\mathcal{S}(\cdot)$ realizes an adaptive stepsize update and we refer to \cite{holler16mri_pet} for more details on the discretization for a particular case and to the source code \cite{joint_tgv_code} for more details on the general setup.

\subsection{Exemplary numerical results}

In this section we provide exemplary numerical results that have been obtained with Algorithm \ref{alg:algorithm}  described in the previous section and the publicly available source code \cite{joint_tgv_code}. 

In the first experiment, we test the reconstruction of an 2D in-vivo coronal knee MRI exam from a clinical MR system. This is an interesting test case for our approach because parts of the clinical protocol consists of two acquisition sequences that obtain data for the same orientation but with two different contrasts. In the first sequence, the spin signal from fat-tissue is suppressed, in the second sequence, fat-tissue is not suppressed. We employ the setting of \eqref{eq:discrete_prop_general} with $N = 2$, $L_\nr = \{1,2\}$ and $L_\kl = \emptyset$.

A reduced number of k-space lines, at a rate of 4 below the Nyquist limit, was acquired to accelerate the duration of the scan. Figure~\ref{fig:knee_in_vivo} shows results from conventional CG-SENSE parallel imaging~\cite{Pruessmann2001} and joint TGV regularized reconstructions. Since no ground truth is available, the parameters for the methods were chosen according to visual inspection.
As can be seen in the figure, the regularized reconstruction suffers from less noise and artifacts and anatomical details are better visible. 

\begin{figure}
\begin{center}
  \includegraphics[width = 0.75 \columnwidth]{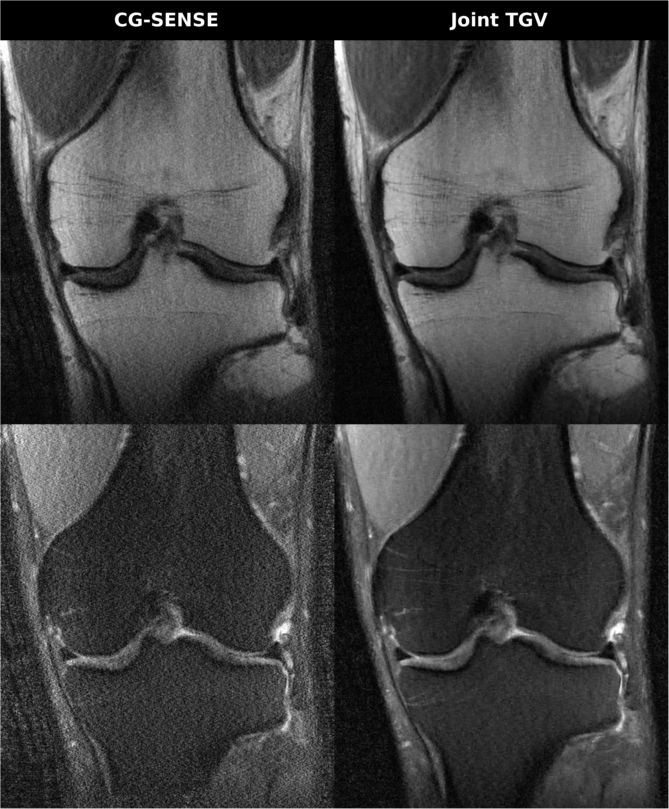}
  \caption{In vivo multi-contrast knee measurement with a data reduction factor of 4. Joint TGV reconstruction is compared to conjugate gradient SENSE.}
  \label{fig:knee_in_vivo}
\end{center}
\end{figure}

In the next experiment, we consider
the situation of combining data from two different imaging modalities, which is an interesting problem in the context of recent developments in hybrid PET-MRI systems~\cite{Quick2013}. This is an interesting case because the different imaging physics lead to different noise properties. MR data is corrupted by Gaussian noise, while PET data comes from a Poisson process. We first performed a numerical simulation study using a human brain phantom~\cite{Aubert-Broche2006}, where we have a ground truth available to evaluate our results. The PET acquisition was simulated such that the total number of counts corresponded to a 10min FDG PET head scan. Radial MR acquisitions with 16 projections were simulated for three commonly used contrasts in brain imaging (MPRAGE, FLAIR, T2 weighted). 
 Acquisition with an 8 channel receive coil was simulated, and MR k-space data were corrupted by adding complex Gaussian noise. The simulated matrix size was 176$\times$176$\times$30. 

 We reconstructed the combined dataset by solving \eqref{eq:discrete_prop_general} in a three-dimensional setting with $N = 4$, $L_\nr = \{1,2,3\}$ and $L_\kl = \{4\}$, and using nuclear norm coupling~\cite{holler16mri_pet} of the four channels.

Results of the simulation are shown in Figure~\ref{fig:brain_phantom}. For reference, we also present results using the currently used standard methods to reconstruct such data, expectation maximization~\cite{Shepp1982} for PET and iterative conjugate gradient (CG) SENSE~\cite{Pruessmann2001} for MR.
Since PET is a quantitative imaging modality, it is important that the quantitative signal values are preserved when coupling the different contrasts and modalities. Table~\ref{tab:brain_phantom} presents the PET signal values (in Bq/cm$^3$) of the ground truth, the conventional and the joint reconstruction for gray matter, white matter and cerebrospinal fluid. Our results demonstrate that coupling improves image quality in terms of RMSE to the ground truth and the fidelity of the quantitative signal values.

\begin{figure}
\begin{center}
  \includegraphics[width = 0.75 \columnwidth]{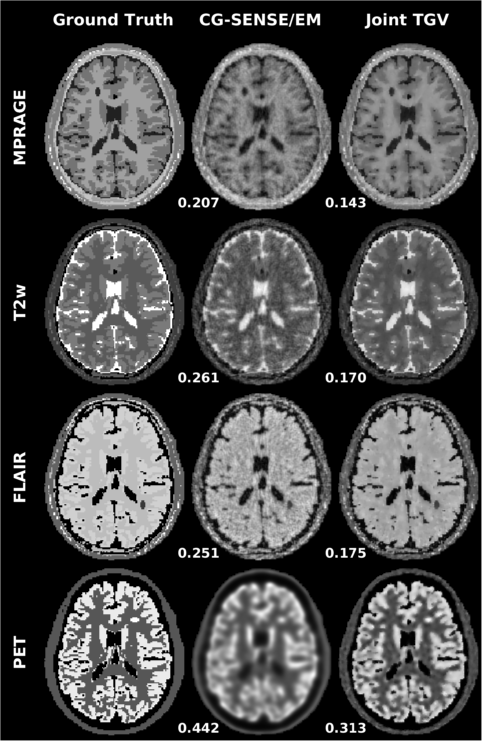}
  \caption{Numerical simulation of multiple MR contrasts and PET. Joint TGV reconstructions show pronounced reduction of streaking artifacts and higher resolution for PET. This is also reflected in notable reductions of RMSE (displayed at bottom left of each image) to the ground truth.}
  \label{fig:brain_phantom}
\end{center}
\end{figure}

\begin{table}[h!]
  \scriptsize
  \begin{center}
    \caption{PET signal activity (Bq/cm$^3$) for three different brain tissues.}
    \label{tab:brain_phantom}
    \begin{tabular}{c|ccc}
			& Gray matter 	& White matter 	& Cerebrospinal fluid 	\\
      \hline
      Ground truth	& 22990		& 8450		& 0			\\
      EM		& 17390	 	& 11470	  	& 8097 	    		\\
      Joint TGV 	& 20255		& 10368	  	& 3193      		\\
      \end{tabular}
  \end{center}
\end{table}

Finally, we performed an experiment with in-vivo PET-MR data from a clinical PET-MR system (Figure~\ref{fig:brain_in_vivo}). PET scan duration was 10 minutes, the MR exam was performed using an MPRAGE contrast at an acceleration factor of 4 below the Nyquist limit. The variational reconstruction was obtained by solving \eqref{eq:discrete_prop_general} again in a three-dimensional setting with $N = 2$, $L_\nr = \{1\}$ and $L_\kl = \{2\}$. Reference reconstructions using EM and CG-SENSE are again shown for reference. Again no ground truth is available, but one can see that the joint reconstruction method generally reduces noise and obtains a sharper reconstruction of the PET image.

\begin{figure}
\begin{center}
  \includegraphics[width = 1 \columnwidth]{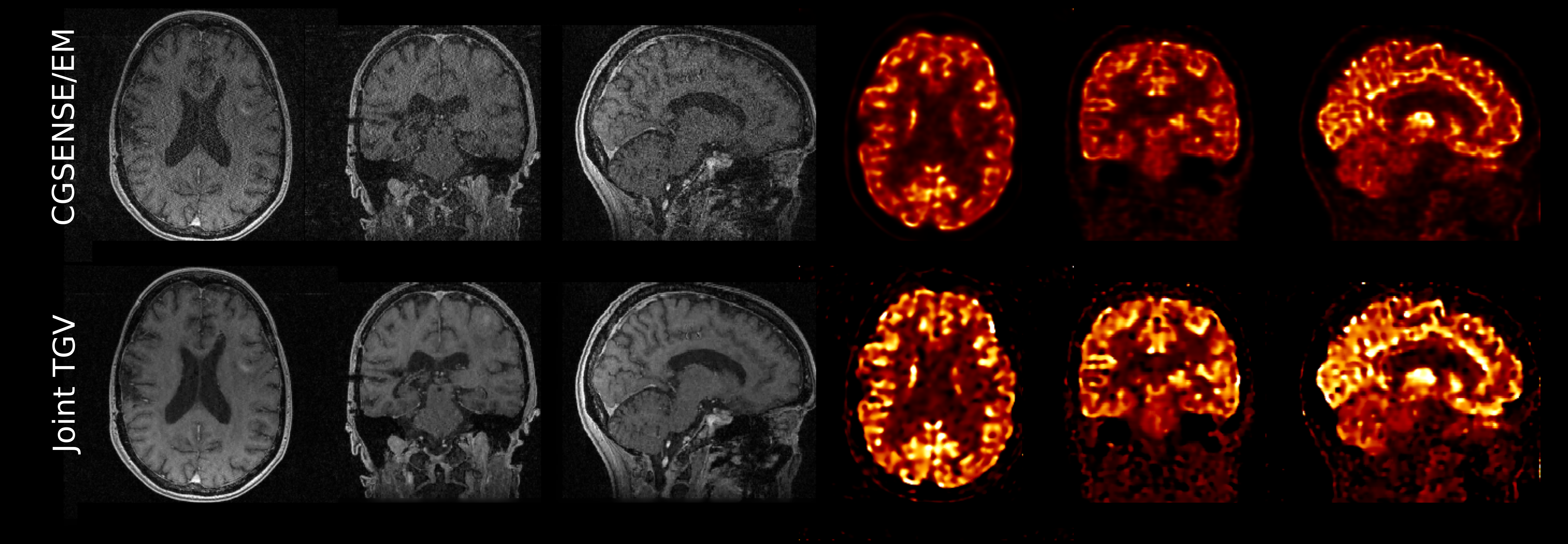}
  \caption{In vivo PET-MRI measurement. Joint TGV reconstruction is compared to EM (PET) and conjugate gradient SENSE (MRI).}
  \label{fig:brain_in_vivo}
\end{center}
\end{figure}

\section{Acknowledgements} MH acknowledges support by the Austrian Science Fund (FWF) (Grants J 4112 and P 29192). He would also like to thank the Isaac Newton Institute for Mathematical Sciences for support and hospitality during the program ``Variational Methods and Effective Algorithms for Imaging and Vision'', which was supported by EPSRC grant number EP/K032208/1, when part of the work on this paper was undertaken. RH acknowledges support by the Austrian Science Fund (FWF) (Grant P 29192). FK acknowledges support by the National Institutes of Health (NIH) under grant P41 EB017183.

\bibliographystyle{amsplain}
\bibliography{lit_dat}

\end{document}